\subjclass[2010]{35B51, 35J92, 49R05}
\keywords{Comparison principle, $p-$Laplacian, Lane-Emden equation, inradius, convex sets.}
\date{\today}
\newtheorem{thm}{Theorem}[section]
\newtheorem{cor}[thm]{Corollary}
\newtheorem{prop}[thm]{Proposition}
\newtheorem{lemma}[thm]{Lemma}
\theoremstyle{definition}
\newtheorem{defn}[thm]{Definition}
\newtheorem{rem}[thm]{Remark}
\newtheorem*{ack}{Acknowledgments}
\numberwithin{equation}{section}
\title[Comparison principle and geometric estimates]{A comparison principle for the Lane-Emden equation\\ and applications to geometric estimates}
\author[Brasco]{Lorenzo Brasco}
\address[L.\ Brasco]{Dipartimento di Matematica e Informatica
	\newline\indent
	Universit\`a degli Studi di Ferrara
	\newline\indent
	Via Machiavelli 35, 44121 Ferrara, Italy}
\email{lorenzo.brasco@unife.it}
\author[Prinari]{Francesca Prinari}
\address[F. Prinari]{Dipartimento di Scienze Agrarie, Alimentari e Agro-ambientali
\newline\indent 
Universit\`a di Pisa
\newline\indent
Via del Borghetto 80, 56124 Pisa, Italy}
\email{francesca.prinari@unipi.it}
\author[Zagati]{Anna Chiara Zagati}
\address[A.\,C.\ Zagati]{Dipartimento di Scienze Matematiche, Fisiche e Informatiche
	\newline\indent
	Universit\`a di Parma
	\newline\indent
	Parco Area delle Scienze 53/a, Campus, 43124 Parma, Italy}
	\email{annachiara.zagati@unipr.it}
\begin{document}

\begin{abstract}
We prove a comparison principle for positive supersolutions and subsolutions to the Lane-Emden equation for the $p-$Laplacian, with subhomogeneous power in the right-hand side. The proof uses variational tools and the result applies with no regularity assumptions, both on the set and the functions.
We then show that such a comparison principle can be applied to prove: uniqueness of solutions; sharp pointwise estimates for positive solutions in convex sets; localization estimates for maximum points and sharp geometric estimates for generalized principal frequencies in convex sets. 
\end{abstract}

\maketitle

\begin{center}
\begin{minipage}{11cm}
\small
\tableofcontents
\end{minipage}
\end{center}

\section{Introduction}

\subsection{The Lane-Emden equation}

Let $1<p<\infty$, we will indicate by
\[
\Delta_p u=\mathrm{div\,}(|\nabla u|^{p-2}\,\nabla u),
\]
the $p-$Laplace operator. Given $\Omega\subset\mathbb{R}^N$ an open set, in this paper we deal with weak solutions to the following quasilinear equation 
\begin{equation}\label{LE}
-\Delta_{p} u = \alpha\, |u|^{q-2}\,u, \qquad \mbox{ in } \Omega,
\end{equation} 
where $\alpha>0$ is a given constant. We will focus on the {\it sub-homogeneous case}, i.e. we will always consider the case
\[
1<q<p.
\] 
The equation \eqref{LE} will be coupled with a Dirichlet boundary condition, not necessarily identically to zero. As for the set $\Omega$, we will make no regularity assumptions on its boundary and will allow for a large class of sets, possibly unbounded or even with infinite volume (we refer to Definition \ref{defn:qadmissible} below for the precise condition). 
\par
We recall that the equation \eqref{LE} is connected to the stationary solutions of the following {\it doubly nonlinear slow diffusion} equation
\begin{equation}
\label{DNE}
\partial_t (|u|^{q-2}\,u)=\Delta_p u,\qquad \mbox{ in } (0,+\infty)\times \Omega.
\end{equation}
More precisely, by looking for nontrivial solutions of the type $u(t,x)=T(t)\,X(x)$, it is straightforward to see that the spatial part $X$ must solve precisely \eqref{LE},
while the temporal part must decay polynomially to $0$, i.\,e. we have
\[
T(t)\sim t^{-\frac{1}{p-q}},\qquad \mbox{ for } t\to +\infty.
\]
As simple as it is, this formal computation is the first important step in the understanding of the long-time behaviour of solutions to \eqref{DNE}, together with the identification of their limit profiles. 
\par
For ease of completeness, we recall that this has been made rigourous in \cite{SV}, at least in the case of {\it positive solutions} and for homogenenous Dirichlet boundary conditions. More precisely, \cite[Theorem 2.1]{SV} proves\footnote{We notice that \cite{SV} deals with the (apparently) different equation
\[
\partial_t v=\Delta_p (|v|^{m-1}\,v),
\]
under the restriction $(p-1)\,m>1$.
However, by taking the nonlinear scaling $u=|v|^{m-1}\,v$ we formally end up with \eqref{DNE}, where $q=1+1/m$. Observe that we have
\[
(p-1)\,m>1 \qquad \Longleftrightarrow \qquad q=1+\frac{1}{m}<p,
\]
which is exactly the range we are interested in.} that 
\[
\lim_{t\to+\infty}\left\|t^\frac{1}{p-q}\,u(t,\cdot)- w_{\Omega,\alpha}(\cdot)\right\|_{L^\infty(\Omega)}=0,
\]
provided $\Omega\subset\mathbb{R}^N$ is an open bounded set, with sufficiently smooth boundary.
Here $w_{\Omega,\alpha}$ is the unique positive solution of \eqref{LE} vanishing at the boundary, with the choice
\[
\alpha=\frac{q-1}{p-q}.
\]
This is the generalization of a classical result by Aronson and Peletier (see \cite{AP} and also \cite{Va} for a simpler proof) for the case $p=2$ and $1<q<2$, where the equation \eqref{DNE} reduces to the so-called {\it porous medium equation}. 
\par
However, it seems that a complete picture about the long-time behaviour of solutions to \eqref{DNE} is still missing, in the case of sign-changing solutions and for more general boundary data. We just cite the recent paper \cite{BV} for the case of sign-changing solutions, in a particular situation. We also refer to
 \cite[Section 1.3]{BDL} and the references therein, for a discussion on the physical relevance of equation \eqref{DNE}.

\subsection{Main results}
Let us now go back to the equation \eqref{LE}.
The first result of this paper is a comparison principle for positive subsolutions and supersolutions of this equation, see Theorem \ref{thm:comparison} below. In proving such a result, we will take a purely variational point of view. Indeed, the cornerstone of the proof will be the so-called {\it hidden convexity principle} for the $p-$Dirichlet integral, i.\,e. the fact that 
\[
\psi\mapsto \int_\Omega |\nabla \psi|^p\,dx
\]
is convex, on the cone of non-negative functions, along curves of the form
\[
t\mapsto \Big((1-t)\,\psi_0^q+t\,\psi_1^q\Big)^\frac{1}{q},\qquad
\]
for $1\le q\le p$. This is the generalization of a remarkable result by Benguria, originally obtained for $p=q=2$ in \cite{Be1} (see also \cite[Lemma 4]{BBL}) and first generalized to the case $p=q\not =2$ by D\'iaz and Sa\'a in \cite[Lemme 1]{DS}. We refer to Remark \ref{rem:HC} for more bibliographical details.
\par
This property in particular entails that the energy functional associated to \eqref{LE}, i.e. 
\[
\mathfrak{F}_{q,\alpha}(\psi)=\frac{1}{p}\,\int_\Omega |\nabla\psi|^p\,dx-\frac{\alpha}{q}\,\int_\Omega |\psi|^q\,dx,
\]
is actually convex, in this suitable sense, despite the presence of the concave lower-order perturbation given by the term with $|\psi|^q$.
This simple observation permits to recover the comparison principle for our equation, by adapting the classical proof of the comparison principle for quasilinear elliptic equations, which coincide with the Euler-Lagrange equation of a convex Lagrangian (see for example \cite[Chapter 1]{Gi}).
\par
This point of view provides an elegant proof, which we believe to be interesting in itself. Moreover,  it enables us to work under minimal assumptions, both on the sets and the functions. In particular, we will make no use of regularity theory for the equation \eqref{LE}, apart for the minimum principle (in measure theoretic sense) for weakly $p-$superharmonic functions. Thus, the proof is very likely to be adapted to more general equations, for example under the presence of weights, where solutions may have a limited degree of regularity. We do not pursue this route in this paper, in order not to bury the main idea under technical details.
\vskip.2cm\noindent
We then apply this comparison principle to obtain a variety of results, notably:
\begin{itemize}
\item a uniqueness result for the minimization of the functional $\mathfrak{F}_{q,\alpha}$ over functions with given (non-negative) boundary datum (Theorem \ref{teo:uniquemin});
\vskip.2cm
\item a ``hierarchy'' result, which asserts that all sign-changing solutions to \eqref{LE} with homogeneous Dirichlet boundary conditions are ``trapped'' between the positive solution $w_{\Omega,\alpha}$ and the negative one $-w_{\Omega,\alpha}$ (Corollary \ref{cor:hierarchy});
\vskip.2cm
\item a sharp {\it pointwise} double-sided estimate on $w_{\Omega,\alpha}$ for convex sets, in terms of geometric quantities (Theorem \ref{thm:bounds}); 
\vskip.2cm
\item  a sharp $L^\infty$ estimate on $w_{\Omega,\alpha}$ (and thus on all sign-changing solutions, see Corollary \ref{cor:Linfty}), as well as a {\it localization result} for maximum points of $w_{\Omega,\alpha}$ (Corollary \ref{cor:localization}), again for convex sets;
\vskip.2cm
\item  a sharp geometric estimate on the so-called {\it generalized principal frequencies} (Theorem \ref{thm:HP}) of an open bounded convex set, i.\,e. the quantities defined by 
\[
\lambda_{p,q}(\Omega) := \inf_{\psi \in C^{\infty}_0(\Omega)} \left\{\int_{\Omega} |\nabla \psi|^p \, dx\, :\, \int_{\Omega} |\psi|^q \, dx=1\right\}.
\]
Observe that the infimum above is attained on $W^{1,p}_0(\Omega)$, under suitable assumption on the open set $\Omega$. By optimality, each minimizer $u$ is a solution of \eqref{LE}, with
$\alpha=\lambda_{p,q}(\Omega)$ and the relevant normalization condition on the $L^q$ norm.
\end{itemize}

\subsection{Some comments} 
At first, we point out that our comparison principle  can be seen as an extension to the quasilinear case of a result by Kajikiya, contained in \cite[Theorem 2.2]{Ka3}. The proof in \cite{Ka3} is different from ours and based on integral identities, relying on Green's formula. A lengthy approximation argument is also needed there, in order to avoid to assume  the regularity of subsolutions and supersolutions (used, on the contrary, in  the comparison principle proved in \cite[Theorem 2.2.4]{PS}). We also observe that the result in \cite{Ka3} is only for the semilinear case, i.e. for $p=2$, but at the same time it is fairly more general, as it deals with positive solutions to
\begin{equation}
\label{eq:general}
-\Delta u = g(u), \quad \mbox{ in }\Omega, 
\end{equation} 
under the assumption that $g$ is sublinear.  
\vskip.2cm\noindent
Our uniqueness result of Theorem \ref{teo:uniquemin} for the minimization of $\mathfrak{F}_{q,\alpha}$ in turn implies uniqueness of the positive solution for the equation \eqref{LE}. In this way, we retrieve a classical result by D\'iaz and Sa\'a contained in \cite{DS}, with a more general boundary datum and without regularity assumptions on the set. On the other hand, we recall that the paper \cite{DS} is concerned with equations fairly more general than \eqref{LE}, where the right-hand side is replaced by a general nonlinearity $g(u)$ which is sub-homogeneous, in a suitable sense.
\par
In any case, we point out that the uniqueness result for the equation does not directly imply the uniqueness of the minimizer, since as already said the functional $\mathfrak{F}_{q,\alpha}$ is not convex in the usual sense. 
Incidentally, we observe that our Theorem \ref{teo:uniquemin} is quite related to \cite[Theorem 1.2]{KLP}, which proves a similar result for $\Omega$ bounded, homogeneous Dirichlet boundary conditions and more general variational integrals with $p-$growth.
\vskip.2cm\noindent
In the subsequent paper \cite{Ka2} Kajikiya applied his comparison principle in order to infer geometric properties of positive solutions to \eqref{eq:general}. Here as well, our paper parallels and extends these results, to the quasilinear case.  In particular our Theorem \ref{thm:bounds} extends \cite[Theorem 2.5]{Ka2} by Kajikiya. Accordingly, all the consequences drawn in \cite{Ka2}, can be inferred here, as well.
\par
We also point out that the double-sided $L^\infty$ estimate of Corollary \ref{cor:Linfty} generalizes to the full range $1\le q<p$ a similar result
for the maximum of the $p-$torsion function, obtained in \cite[Theorem 1.2]{DPGG}. In \cite{DPGG} the result is obtained by means of a different proof, based on the so-called {\it $P-$function method}.
\vskip.2cm\noindent
As for the localization result, our application of the geometric estimate in this context appears to be original. But of course, our result should be compared with related results recently obtained by Magnanini and Poggesi in \cite{MP}, by means of a different proof, still reminiscent of the $P-$function method.
\vskip.2cm\noindent
Finally, concerning the estimate for generalized principal frequencies, our result reads as follows
\begin{equation}
\label{HPBM}
\lambda_{p,q}(\Omega)\,|\Omega|^{\frac{p-q}{q}} \ge \left( \frac{\pi_{p,q}}{2} \right)^p\,\frac{1}{r_\Omega^p},
\end{equation}
for every $\Omega\subset\mathbb{R}^N$ open bounded convex set.
Here $r_\Omega$ is the {\it inradius} of $\Omega$ and $\pi_{p,q}$ is the one-dimensional Sobolev-Poincar\'e constant, defined by
\begin{equation}
\label{pi}
\pi_{p,q} := \inf_{u \in C_0^{\infty}((0,1))} \left\{ \|u'\|_{L^p([0,1])}\, :\, \|u\|_{L^q([0,1])}=1 \right\}. 
\end{equation}
For finite $p$ and $q$, we recall that this constant is computed for example in \cite[equation (7)]{Ta} and it is given by
\[
\pi_{p,q}=\frac{2}{q}\,\left(1+\dfrac{q}{p'}\right)^\frac{1}{q}\,\left(1+\dfrac{p'}{q}\right)^{-\frac{1}{p}}\,B\left(\frac{1}{q},\frac{1}{p'}\right),
\]
where $p'=p/(p-1)$ and $B$ is the {\it Euler Beta function}.
\par
Inequality \eqref{HPBM} is the generalization to $p\not =2$ of a result recently obtained by the first author and Mazzoleni in \cite[Theorem 1.1]{BM}. It contains in a unified way a variety of sharp geometric estimates on generalized principal frequencies. For example, by taking $q=1$ this estimate reduces to 
\[
T_p(\Omega)\le \left(\frac{2}{\pi_{p,1}} \right)^p\,|\Omega|^{p-1}\,r_\Omega^p,
\]
where $T_p(\Omega)=1/\lambda_{p,1}(\Omega)$ is the so-called {\it $p-$torsional rigidity}. This is the extension to the case $p\not=2$ of an old result by Makai \cite{M} in the two-dimensional case. Such an extension
has already been proved in \cite[Theorem 1.1]{DPGG}, by means of a different proof, with respect to the one devised here. We refer to \cite{VBP} and \cite{BBP} for further geometric estimates on the $p-$torsional rigidity.
\par
Similarly, by taking the limit as $q$ goes to $p$ in \eqref{HPBM}, we obtain\footnote{We use here that $q\mapsto \lambda_{p,q}(\Omega)$ is left-continuous at $q=p$, if $\Omega\subset\mathbb{R}^N$ is an open bounded set. Actually, such a property is true under fairly more general assumptions, we refer to Proposition \ref{prop:continuity} below.} 
\[
\lambda_{p}(\Omega)\ge \left( \frac{\pi_{p}}{2} \right)^p\,\frac{1}{r_\Omega^p},
\]
where $\lambda_p$ and $\pi_p$ are defined as above, by simply taking $q=p$.
This is the generalization to the case of the $p-$Laplacian of the {\it Hersch-Protter inequality}, originally proved by Hersch  in \cite[Th\'eor\`eme 8.1]{He} for $p=2=N$ and then generalized to every dimension by Protter in \cite[Theorem 2]{Pr} (see also \cite{BGM}). The case $p\not =2$ has been already obtained in \cite{Bra2, DDG, Ka1}.

\subsection{Plan of the paper}
In Section \ref{sec:2} we present all the definitions and basic results, which will be used throughout the whole paper. In particular, we prove an enhanced version of the hidden convexity principle, which permits to identify the equality cases (see Theorem \ref{thm:shc} below).
\par
We start Section \ref{sec:3} with an existence result for the minimization of $\mathfrak{F}_{q,\alpha}$, among functions with given boundary datum. This is quite standard, but we state it in great generality and pay particular attention to the limit case $q=1$, where the functional is not Gateaux differentiable.
\par
Then, in Section \ref{sec:4} we prove the comparison principle for positive supersolutions and subsolutions of the Lane-Emden equation. We apply it to prove uniqueness of minimizers for the functional $\mathfrak{F}_{q,\alpha}$, when the boundary datum is non-negative and $\Omega$ is connected. This in particular permits to define the function $w_{\Omega,\alpha}$, i.e. the unique positive solution of \eqref{LE} with homogeneous Dirichlet conditions.
\par 
All the applications of the comparison principle to geometric estimates are then given in Section \ref{sec:5}. Finally, the paper is complemented by three appendices: while the first two contain standard materials, in Appendix \ref{app:C} we study the asymptotic behaviour of the function $w_{\Omega,\alpha}$ in the set
\[
\Omega_L=\left(-\frac{L}{2},\frac{L}{2}\right)\times(-1,1),
\]
as the parameter $L$ goes to $+\infty$. This is needed to infer sharpness in Theorem \ref{thm:bounds}.
\begin{ack}
We thank Bruno Volzone for pointing out the reference \cite{SV} and   an  anonymous referee for his suggestions and
comments. The second and third authors are members of the Gruppo Nazionale per l'Analisi Matematica, la Probabilit\`a
e le loro Applicazioni (GNAMPA) of the Istituto Nazionale di Alta Matematica (INdAM). 
\par
Part of this work has been written during the conference ``Variational methods and applications'', held at the {\it Centro di Ricerca Matematica ``Ennio De Giorgi''} in September 2021. The organizers and the hosting institution are gratefully acknowledged. 
\end{ack}

\section{Preliminaries}
\label{sec:2}

\subsection{Notation}

For $x_0\in\mathbb{R}^N$ and $R>0$, we will denote by $B_R(x_0)$ the $N-$dimensional open ball with radius $R$, centered at $x_0$. When the center coincides with the origin, we will simply write $B_R$.
\par
For a function $u\in L^1_{\rm loc}(\mathbb{R}^N)$, we will indicate by $u_+$ and $u_-$ the two functions
\[
u_+=\max\{u,0\}\qquad \mbox{ and }\qquad u_-=\max\{-u,0\}.
\]
In this way, we have $u=u_+-u_-$.
\par
For an open set $\Omega \subset \mathbb{R}^N$ with non-empty boundary, 
we denote by $d_{\Omega}$ the distance function from the boundary $\partial\Omega$, defined by
\[ 
d_{\Omega}(x):=\inf_{y \in \partial\Omega} |x-y|, \qquad \mbox{ for every } x \in \Omega.
\]
The {\it inradius} $r_\Omega$ of $\Omega$ will be the radius of a largest ball contained in $\Omega$. More precisely, this quantity is given by
\begin{equation}
\label{inradius}
r_\Omega=\sup \Big\{r>0\, :\, \mbox{ exists }x_0\in \Omega \mbox{ such that } B_r(x_0)\subset\Omega\Big\}.
\end{equation}
It is well-known that this coincides with the supremum over $\Omega$ of $d_\Omega$.

\subsection{Sobolev spaces} Let $1<p<\infty$ and let $\Omega\subset\mathbb{R}^N$ be an open set. We indicate by $\mathcal{D}^{1,p}_0(\Omega)$ the completion of $C^\infty_0(\Omega)$, with respect to the norm
\[
\psi\mapsto \|\nabla \psi\|_{L^p(\Omega;\mathbb{R}^N)},\qquad \mbox{ for every } \psi\in C^\infty_0(\Omega).
\]
We will also indicate by $W^{1,p}_0(\Omega)$ the closure of $C^\infty_0(\Omega)$, in the usual Sobolev space $W^{1,p}(\Omega)$, endowed with the norm
\[
\|\psi\|_{W^{1,p}(\Omega)}=\|\psi\|_{L^p(\Omega)}+\|\nabla \psi\|_{L^p(\Omega;\mathbb{R}^N)}.
\]
\begin{defn}
\label{defn:qadmissible}
Let $1<p<\infty$, we say that an open set $\Omega\subset\mathbb{R}^N$ is {\it $q-$admissible} for some $1\le q<p$, if 
\[
\lambda_{p,q}(\Omega) := \inf_{\psi \in C^{\infty}_0(\Omega)} \left\{\int_{\Omega} |\nabla \psi|^p \, dx\, :\, \int_{\Omega} |\psi|^q \, dx=1\right\}>0.
\]
This is equivalent to require that the homogeneous Sobolev space $\mathcal{D}^{1,p}_0(\Omega)$ is continuously embedded in $L^q(\Omega)$.
\end{defn}
\begin{rem}
A necessary and sufficient condition for an open set to be $q-$admissible is given in \cite[Theorem 15.5.2]{Maz}. We recall that bounded open sets and, more generally, open sets with finite volume are $q-$admissible, for every $1\le q<p$. However, the class of $q-$admissible sets is larger, since there exist examples of $q-$admissible sets, which have infinite volume (see \cite[Section 15.5.3]{Maz}).  
\end{rem}
\begin{prop}
\label{prop:continuity}
Let $1<p<\infty$ and let $\Omega\subset\mathbb{R}^N$ be an open set, which is $q_0-$admissible for some $1\le q_0<p$. Then we have 
\[
\lim_{q\nearrow p} \lambda_{p,q}(\Omega)=\lambda_{p}(\Omega),
\]
where 
\[
\lambda_{p}(\Omega) := \inf_{\psi \in C^{\infty}_0(\Omega)} \left\{\int_{\Omega} |\nabla \psi|^p \, dx\, :\, \int_{\Omega} |\psi|^p \, dx=1\right\}.
\]
\end{prop}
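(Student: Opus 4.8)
The plan is to prove the two inequalities $\limsup_{q\nearrow p}\lambda_{p,q}(\Omega)\le\lambda_p(\Omega)$ and $\liminf_{q\nearrow p}\lambda_{p,q}(\Omega)\ge\lambda_p(\Omega)$ separately, after first reducing everything to test functions in $C^\infty_0(\Omega)$. For the upper bound, I would fix $\varepsilon>0$ and pick $\psi\in C^\infty_0(\Omega)$ with $\int_\Omega|\psi|^p\,dx=1$ and $\int_\Omega|\nabla\psi|^p\,dx\le\lambda_p(\Omega)+\varepsilon$. Since $\psi$ is fixed, smooth and compactly supported, $q\mapsto\int_\Omega|\psi|^q\,dx$ is continuous as $q\nearrow p$ (dominated convergence on the fixed compact support), so normalizing $\psi$ in $L^q$ and using it as a competitor for $\lambda_{p,q}(\Omega)$ gives
\[
\lambda_{p,q}(\Omega)\le\frac{\int_\Omega|\nabla\psi|^p\,dx}{\left(\int_\Omega|\psi|^q\,dx\right)^{p/q}}\xrightarrow[q\nearrow p]{}\int_\Omega|\nabla\psi|^p\,dx\le\lambda_p(\Omega)+\varepsilon,
\]
whence $\limsup_{q\nearrow p}\lambda_{p,q}(\Omega)\le\lambda_p(\Omega)+\varepsilon$, and we let $\varepsilon\to0$.

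For the lower bound I would take, for each $q<p$ close to $p$, an almost-optimal $\psi_q\in C^\infty_0(\Omega)$ with $\int_\Omega|\psi_q|^q\,dx=1$ and $\int_\Omega|\nabla\psi_q|^p\,dx\le\lambda_{p,q}(\Omega)+\varepsilon$, and estimate the Rayleigh quotient $\int_\Omega|\nabla\psi_q|^p\,dx\big/\big(\int_\Omega|\psi_q|^p\,dx\big)^{p/q}$ from below by $\lambda_{p,q}(\Omega)$ applied to $\psi_q$ itself — wait, that is circular; instead the key tool is an interpolation/Hölder-type inequality relating $\|\psi_q\|_{L^q}$, $\|\psi_q\|_{L^p}$ and $\|\nabla\psi_q\|_{L^p}$ on a $q_0$-admissible set. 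Concretely, since $\Omega$ is $q_0$-admissible, $\mathcal D^{1,p}_0(\Omega)\hookrightarrow L^{q_0}(\Omega)$, so by Hölder's inequality between $L^{q_0}$ and $L^p$ (using $q_0\le q\le p$, write $q$ as an interpolation exponent, $1/q=\theta/q_0+(1-\theta)/p$ with $\theta=\theta(q)\to0$ as $q\nearrow p$) one gets
\[
1=\|\psi_q\|_{L^q}^q\le\|\psi_q\|_{L^{q_0}}^{\theta q}\,\|\psi_q\|_{L^p}^{(1-\theta)q}\le\Big(C\,\|\nabla\psi_q\|_{L^p}\Big)^{\theta q}\,\|\psi_q\|_{L^p}^{(1-\theta)q},
\]
where $C$ is the embedding constant $\lambda_{p,q_0}(\Omega)^{-1/q_0}$. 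Rearranging yields a lower bound for $\|\psi_q\|_{L^p}$ in terms of $\|\nabla\psi_q\|_{L^p}$, hence a lower bound for the $L^p$-Rayleigh quotient of $\psi_q$ by $\lambda_p(\Omega)$, which after a short computation and using $\theta(q)\to0$ (and that $\|\nabla\psi_q\|_{L^p}^p$ stays bounded, being close to $\lambda_{p,q}(\Omega)$, itself bounded by the already-proven upper bound) produces $\lambda_{p,q}(\Omega)\ge\lambda_p(\Omega)-o(1)$ as $q\nearrow p$.

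The main obstacle is the lower bound, and specifically controlling the interpolation exponent $\theta(q)$ together with the a priori boundedness of $\|\nabla\psi_q\|_{L^p}$: one must make sure the factor $\big(C\,\|\nabla\psi_q\|_{L^p}\big)^{\theta q}$ tends to $1$, which requires knowing $\|\nabla\psi_q\|_{L^p}$ does not blow up (guaranteed by the upper bound half, giving $\lambda_{p,q}(\Omega)\le\lambda_p(\Omega)+1$ for $q$ near $p$) and that $\theta q\to0$. A clean alternative for the lower bound is to argue by a compactness/$\Gamma$-convergence flavour: take the almost-minimizers $\psi_q$, normalize, and show the functionals $\psi\mapsto\int_\Omega|\nabla\psi|^p\,dx$ with the constraint $\int_\Omega|\psi|^q\,dx=1$ $\Gamma$-converge (with respect to an appropriate topology on $\mathcal D^{1,p}_0(\Omega)$, using $q_0$-admissibility for coercivity/compactness) to the one with $q=p$ as $q\nearrow p$; the monotone-convergence structure $q\mapsto|t|^q$ for $|t|\le 1$ versus $|t|\ge1$ makes the liminf inequality the delicate point. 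I would present the elementary interpolation argument as the primary route, as it avoids invoking $\Gamma$-convergence machinery and keeps the proof self-contained.
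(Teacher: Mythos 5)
Your proposal is correct and uses essentially the same two ingredients as the paper: the trivial upper bound from a fixed test function, and, for the lower bound, Lebesgue interpolation $\|\psi\|_{L^q}\le\|\psi\|_{L^{q_0}}^{1-\vartheta}\|\psi\|_{L^p}^{\vartheta}$ together with the $q_0$-admissibility to control the $L^{q_0}$ norm by $\|\nabla\psi\|_{L^p}$. The one place where the paper is cleaner is the lower bound: rather than picking almost-minimizers $\psi_q$ normalized in $L^q$ and then having to verify that $\|\nabla\psi_q\|_{L^p}$ stays bounded (the ``obstacle'' you flag), the paper inserts the interpolation inequality directly into the Rayleigh quotient and obtains, for \emph{every} $\psi\in C^\infty_0(\Omega)$, the $\psi$-independent bound
\[
\frac{\displaystyle\int_\Omega|\nabla\psi|^p\,dx}{\left(\displaystyle\int_\Omega|\psi|^q\,dx\right)^{p/q}}\ \ge\ \bigl(\lambda_{p,q_0}(\Omega)\bigr)^{1-\vartheta}\,\bigl(\lambda_{p}(\Omega)\bigr)^{\vartheta},
\]
whence $\lambda_{p,q}(\Omega)\ge(\lambda_{p,q_0}(\Omega))^{1-\vartheta}(\lambda_p(\Omega))^{\vartheta}$ by taking the infimum over $\psi$, and one only needs $\vartheta\to1$; the a priori boundedness of the gradient norms of near-minimizers is never needed. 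This small reorganization would tighten your argument without changing its substance.
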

\begin{proof}
Let $\psi\in C^\infty_0(\Omega)\setminus\{0\}$, by definition of $\lambda_{p,q}(\Omega)$ we have for every $1\le q<p$
\[
\lambda_{p,q}(\Omega)\le \frac{\displaystyle\int_\Omega |\nabla \psi|^p\,dx}{\left(\displaystyle\int_\Omega |\psi|^q\,dx\right)^\frac{p}{q}}.
\]
If we now take the limit as $q$ goes to $p$, we get
\[
\limsup_{q\nearrow p} \lambda_{p,q}(\Omega)\le \lim_{q\nearrow p }\frac{\displaystyle\int_\Omega |\nabla \psi|^p\,dx}{\left(\displaystyle\int_\Omega |\psi|^q\,dx\right)^\frac{p}{q}}=\frac{\displaystyle\int_\Omega |\nabla \psi|^p\,dx}{\displaystyle\int_\Omega |\psi|^p\,dx}.
\]
By arbitrariness of $\psi$ and recalling the definition of $\lambda_p(\Omega)$, we obtain
\[
\limsup_{q\nearrow p} \lambda_{p,q}(\Omega)\le \lambda_p(\Omega).
\]
In order to complete the proof, we use that for every $q_0<q<p$ and every $\psi\in C^\infty_0(\Omega)\setminus\{0\}$ we have
\[
\|\psi\|_{L^q(\Omega)}\le \|\psi\|_{L^{q_0}(\Omega)}^{1-\vartheta}\,\|\psi\|_{L^p(\Omega)}^\vartheta,\qquad \mbox{ with } \vartheta=\frac{p}{q}\,\frac{q-q_0}{p-q_0},
\]
by interpolation in Lebesgue spaces. In particular, by using that $\Omega$ is $q_0-$admissible, this entails that
\[
\|\psi\|_{L^q(\Omega)}\le \left(\frac{1}{\lambda_{p,q_0}(\Omega)}\right)^\frac{1-\vartheta}{p}\,\|\nabla\psi\|_{L^p(\Omega;\mathbb{R}^N)}^{1-\vartheta}\,\|\psi\|_{L^p(\Omega)}^\vartheta.
\]
We can thus estimate the Rayleigh--type quotient defining $\lambda_{p,q}(\Omega)$ as follows
\[
\begin{split}
\frac{\displaystyle\int_\Omega |\nabla \psi|^p\,dx}{\left(\displaystyle\int_\Omega |\psi|^q\,dx\right)^\frac{p}{q}}\ge \Big(\lambda_{p,q_0}(\Omega)\Big)^{1-\vartheta}\,\frac{\left(\displaystyle\int_\Omega |\nabla \psi|^p\,dx\right)^\vartheta}{\left(\displaystyle\int_\Omega |\psi|^p\,dx\right)^\vartheta}\ge \Big(\lambda_{p,q_0}(\Omega)\Big)^{1-\vartheta}\,\Big(\lambda_{p}(\Omega)\Big)^\vartheta.
\end{split}
\]
By arbitrariness of $\psi$, this yields
\[
\lambda_{p,q}(\Omega)\ge \Big(\lambda_{p,q_0}(\Omega)\Big)^{1-\vartheta}\,\Big(\lambda_{p}(\Omega)\Big)^\vartheta.
\]
If we pass to the limit as $q$ goes to $p$ and observe that 
\[
\lim_{q\nearrow p} \vartheta=\lim_{q\nearrow p}\frac{p}{q}\,\frac{q-q_0}{p-q_0}=1,
\]
we finally get
\[
\liminf_{q\nearrow p} \lambda_{p,q}(\Omega)\ge \lambda_p(\Omega),
\]
as desired.
\end{proof}
Let $1\le q<\infty$ and $1<p<\infty$, let $\Omega\subset\mathbb{R}^N$ be an open set. In the sequel, we will need the Sobolev space
\[
X^{q,p}(\Omega):=\Big\{\psi \in L^q(\Omega)\, :\, \nabla \psi \in L^p(\Omega;\mathbb{R}^N)\Big\},
\]
endowed with the norm
\[
\|\psi\|_{X^{q,p}(\Omega)}:=\|\psi\|_{L^q(\Omega)}+\|\nabla \psi\|_{L^p(\Omega;\mathbb{R}^N)}.
\]
Observe that in general we have 
\[
W^{1,p}(\Omega)\not= X^{q,p}(\Omega),
\]
unless $q=p$. For $1\le q<p$, this can be seen for example by adapting the classical counter-example by Nikod\'ym (see \cite[Example 1, Section 1.1.4]{Maz}), to infer existence of an open set $\Omega\subset\mathbb{R}^N$ and a function $\psi_0\in X^{q,p}(\Omega)$ such that
\[
\psi_0\not\in L^p(\Omega).
\]
For $q>p$, it is sufficient to take any irregular open set $\Omega\subset\mathbb{R}^N$ such that $W^{1,p}(\Omega)$ does not imbed into $L^q(\Omega)$ (see for example \cite[page 95]{Gi}).
\par
We also introduce the space $X^{q,p}_0(\Omega)$, defined as the closure of $C^\infty_0(\Omega)$ in $X^{q,p}(\Omega)$. The following technical results will be useful.
\begin{lemma}
\label{lem:Upos}
Let $v,U\in X^{q,p}(\Omega)$, then: 
\begin{enumerate}
\item[{\it (i)}] if $v-U\in X^{q,p}_0(\Omega)$, we have $|v|-|U|\in X^{q,p}_0(\Omega)$, as well;
\vskip.2cm
\item[{\it (ii)}] if $v\in X^{q,p}_0(\Omega)$ and $U$ is non-negative, we have $(v-U)_+\in X^{q,p}_0(\Omega)$.
\end{enumerate}
\end{lemma}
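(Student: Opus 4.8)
The plan is, in both cases, to build a sequence of functions that have support a compact subset of $\Omega$, belong to $X^{q,p}(\Omega)$, and converge in the $X^{q,p}$-norm to the function we want. The reason for insisting on compact support is that any $\psi\in X^{q,p}(\Omega)$ whose support is a compact subset of $\Omega$ automatically lies in $X^{q,p}_0(\Omega)$: extending $\psi$ by zero is legitimate since $\psi$ vanishes in a neighbourhood of $\partial\Omega$, and then for $\varepsilon$ small the mollification $\psi*\rho_\varepsilon$ belongs to $C^\infty_0(\Omega)$ and converges to $\psi$ both in $L^q$ and, for the gradient, in $L^p$. Since $X^{q,p}_0(\Omega)$ is by definition closed in $X^{q,p}(\Omega)$, the limit of such a sequence remains in $X^{q,p}_0(\Omega)$.

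For $(i)$ I would pick $w_n\in C^\infty_0(\Omega)$ with $w_n\to v-U$ in $X^{q,p}(\Omega)$ and set $g_n:=|U+w_n|-|U|$. The reverse triangle inequality gives $|g_n|\le|w_n|$, so each $g_n$ is supported in $\mathrm{supp}\,w_n$ and belongs to $L^q(\Omega)$; by the chain rule for Sobolev functions $\nabla g_n=\mathrm{sign}(U+w_n)\,\nabla(U+w_n)-\mathrm{sign}(U)\,\nabla U$ almost everywhere, which is dominated by $2\,|\nabla U|+|\nabla w_n|\in L^p(\Omega)$. Hence $g_n\in X^{q,p}(\Omega)$ with compact support, so $g_n\in X^{q,p}_0(\Omega)$. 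The convergence $g_n\to|v|-|U|$ in $L^q(\Omega)$ is immediate, because $\| \, |U+w_n|-|v| \, \|_{L^q}\le\|w_n-(v-U)\|_{L^q}\to0$; and since the two occurrences of $\mathrm{sign}(U)\,\nabla U$ cancel, the only point left is to prove that $\mathrm{sign}(U+w_n)\,\nabla(U+w_n)\to\mathrm{sign}(v)\,\nabla v$ in $L^p(\Omega)$.

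For $(ii)$ the construction is parallel: take $v_n\in C^\infty_0(\Omega)$ with $v_n\to v$ in $X^{q,p}(\Omega)$ and set $h_n:=(v_n-U)_+$. Using $U\ge0$ one has $0\le h_n\le(v_n)_+\le|v_n|$, so $h_n$ is supported in $\mathrm{supp}\,v_n$ and lies in $L^q(\Omega)$, while $\nabla h_n=\chi_{\{v_n>U\}}\,(\nabla v_n-\nabla U)$ is dominated by $|\nabla v_n|+|\nabla U|\in L^p(\Omega)$; hence $h_n\in X^{q,p}_0(\Omega)$. Since $t\mapsto t_+$ is $1$-Lipschitz, $h_n\to(v-U)_+$ in $L^q(\Omega)$, and it remains to show that $\chi_{\{v_n>U\}}\,(\nabla v_n-\nabla U)\to\chi_{\{v>U\}}\,(\nabla v-\nabla U)$ in $L^p(\Omega)$.

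The main obstacle — the same in both parts — is exactly this last gradient convergence, which is a continuity statement for the a priori discontinuous maps $\zeta\mapsto\mathrm{sign}(\zeta)\,\nabla\zeta$ and $\zeta\mapsto\chi_{\{\zeta>U\}}\,\nabla\zeta$. I would settle it through the Urysohn subsequence principle: from an arbitrary subsequence extract a further one along which the relevant sequence tends to $v$ a.e., its gradient tends to $\nabla v$ a.e., and the gradients are dominated by a fixed $L^p$ function. On $\{v\neq0\}$ (respectively $\{v\neq U\}$) the integrand converges pointwise, because $\mathrm{sign}$ is continuous away from the origin (respectively because $\chi_{\{v_n>U\}}$ is eventually constant there); on the level set $\{v=0\}$ (respectively $\{v=U\}$) I would invoke the classical fact that the gradient of a Sobolev function vanishes almost everywhere on a level set — so $\nabla v=0$ (respectively $\nabla v=\nabla U$) a.e. there — together with the bound $|\mathrm{sign}(U+w_n)\,\nabla(U+w_n)|\le|\nabla(U+w_n)|\to0$ a.e. on that set. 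Dominated convergence then gives $L^p$-convergence along the extracted subsequence, and the Urysohn principle promotes it to convergence of the whole sequence; this settles both $(i)$ and $(ii)$.
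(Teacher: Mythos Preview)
Your argument is correct. The construction in part $(i)$ is the same as the paper's, but the way you handle the gradient is different: the paper does \emph{not} prove that $\nabla g_n\to\nabla(|v|-|U|)$ strongly in $L^p$. It simply observes that $\{\nabla g_n\}$ is bounded in $L^p$, extracts a weakly convergent subsequence, identifies the limit via the $L^q$ convergence, and then invokes Mazur's Lemma to upgrade weak to strong convergence of suitable convex combinations---which is all one needs, since $X^{q,p}_0(\Omega)$ is closed. This sidesteps entirely the pointwise analysis on the level set $\{v=0\}$ and the Urysohn subsequence extraction, at the price of quoting Mazur. Your route is more elementary in that it avoids weak compactness and Mazur, but it requires the (correct, though slightly delicate) dominated-convergence argument on level sets.

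For part $(ii)$ the difference is more substantial. The paper does not run a second approximation argument at all; it reduces $(ii)$ to $(i)$ by the algebraic identity
\[
(v-U)_+=\frac{|v-U|+(v-U)}{2}=\frac{\big(|U-v|-|U|\big)+v}{2},
\]
valid because $U\ge 0$, and then notes that $v\in X^{q,p}_0(\Omega)$ by hypothesis while $|U-v|-|U|\in X^{q,p}_0(\Omega)$ by part $(i)$ applied with $U-v$ in place of $v$ (legitimate since $(U-v)-U=-v\in X^{q,p}_0(\Omega)$). This is shorter and avoids repeating the level-set analysis. Your direct approach works just as well and is self-contained, but the paper's trick is worth knowing.
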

\begin{proof}
We prove $(i)$ first. By assumption, there exists a sequence $\{\psi_n\}_{n\in\mathbb{N}}\subset C^\infty_0(\Omega)$ such that
\[
\lim_{n\to\infty} \|\psi_n-(v-U)\|_{X^{q,p}(\Omega)}=0.
\]
We then define the new sequence $\{\varphi_n\}_{n\in\mathbb{N}}$ by 
\[
\varphi_n=|\psi_n+U|-|U|,\qquad \mbox{ for every } n\in\mathbb{N}.
\]
Observe that each $\varphi_n\in X^{q,p}(\Omega)$ and it has compact support. Thus, with a minor modification of the proof of \cite[Lemma 9.5]{Brezis}, we get $\{\varphi_n\}_{n\in\mathbb{N}}\subset X^{q,p}_0(\Omega)$. By construction and by the triangle inequality, we have
\[
\Big|\varphi_n-(|v|-|U|)\Big|=\Big||\psi_n+U|-|v|\Big|\le |\psi_n-(v-U)|.
\]
Thus we get
\[
\lim_{n\to\infty}\big\|\varphi_n-(|v|-|U|)\big\|_{L^q(\Omega)}=0.
\]
Moreover, it is easily seen that the sequence $\{\nabla \varphi_n\}_{n\in\mathbb{N}}\subset L^p(\Omega;\mathbb{R}^N)$ is bounded. Thus 
it is weakly converging to some $\phi\in L^p(\Omega;\mathbb{R}^N)$, up to a subsequence. By using the strong convergence in $L^q(\Omega)$ and the definition of weak gradient, we can identify $\phi=\nabla |v|-\nabla |U|$. By Mazur's Lemma (see \cite[Theorem 2.13]{LL}), we can build a new sequence $\{(\widetilde\varphi_n,\widetilde\phi_n)\}_{n\in\mathbb{N}}\subset L^q(\Omega)\times L^p(\Omega;\mathbb{R}^N)$, made of convex combinations of $\{(\varphi_n,\nabla \varphi_n)\}_{n\in\mathbb{N}}$, such that 
\[
\lim_{n\to\infty} \left( \|\widetilde\varphi_n-(|v|-|U|)\|_{L^q(\Omega)}+\|\widetilde\phi_n-\nabla(|v|-|U|)\|_{L^p(\Omega;\mathbb{R}^N)}\right)=0.
\]
Moreover, by construction we clearly have $\widetilde\phi_n=\nabla \widetilde\varphi_n$. This permits to show that $|v|-|U|$ is the limit in the $X^{q,p}(\Omega)$ norm of a sequence $\{\widetilde\varphi_n\}_{n\in\mathbb{N}}\subset X^{q,p}_0(\Omega)$. Since the latter is a closed subspace, we get the conclusion.
\vskip.2cm\noindent
In order to prove $(ii)$, it is sufficient to write
\[
(v-U)_+=\frac{|v-U|+(v-U)}{2}=\frac{(|v-U|-U)+v}{2}.
\]
Then we observe that $v\in X^{q,p}_0(\Omega)$ by assumption, while 
\[
|v-U|-U=|U-v|-|U|\in X^{q,p}_0(\Omega),
\]
from point $(i)$, applied to the function $U-v$. Indeed, observe that $(U-v)-U\in X^{q,p}_0(\Omega)$.
\end{proof}

\begin{prop}
\label{prop:spazi0}
Let $1\le q<p<\infty$ and let $\Omega\subset\mathbb{R}^N$ be a $q-$admissible open set. Then we have
\[
X^{q,p}_0(\Omega)=W^{1,p}_0(\Omega)=\mathcal{D}^{1,p}_0(\Omega),
\] 
and the three spaces are compactly embedded into $L^q(\Omega)$.
Consequently, we also have 
\[
W^{1,p}_0(\Omega)\cap X^{q,p}(\Omega)=W^{1,p}_0(\Omega).
\]
\end{prop}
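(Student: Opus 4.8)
The plan is to prove the three spaces coincide by establishing two chains of continuous inclusions and then invoking the $q$-admissibility hypothesis to close the loop, after which compactness follows from standard Rellich-type arguments.

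\textbf{Step 1: The inclusions that hold for free.}
First I would note that $C^\infty_0(\Omega)$ is a common dense subset of all three spaces, so everything reduces to comparing the norms on $C^\infty_0(\Omega)$ and checking which completions are genuinely larger. Since $\Omega$ is $q$-admissible, for every $\psi\in C^\infty_0(\Omega)$ one has $\|\psi\|_{L^q(\Omega)}\le \lambda_{p,q}(\Omega)^{-1/p}\,\|\nabla\psi\|_{L^p(\Omega;\mathbb{R}^N)}$; hence the $X^{q,p}$-norm and the $\mathcal{D}^{1,p}_0$-norm are equivalent on $C^\infty_0(\Omega)$, giving $X^{q,p}_0(\Omega)=\mathcal{D}^{1,p}_0(\Omega)$ with equivalent norms. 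This same Poincaré-type inequality also passes to the completion, so elements of $\mathcal{D}^{1,p}_0(\Omega)$ lie in $L^q(\Omega)$ and the embedding $\mathcal{D}^{1,p}_0(\Omega)\hookrightarrow L^q(\Omega)$ is continuous. The trivial chain $W^{1,p}_0(\Omega)\subseteq X^{q,p}_0(\Omega)$ (because the $X^{q,p}$-norm is dominated by the $W^{1,p}$-norm, using $q<p$ on a bounded-measure set is \emph{not} available here, so instead one must interpolate: $\|\psi\|_{L^q}\le\|\psi\|_{L^q}$ is controlled via the $q$-admissibility inequality just stated, so in fact $W^{1,p}$-Cauchy sequences are $X^{q,p}$-Cauchy) needs a word of care; the clean way is: on $C^\infty_0(\Omega)$ the $W^{1,p}$-norm dominates $\|\nabla\cdot\|_{L^p}$, hence dominates the equivalent $X^{q,p}$-norm, so $W^{1,p}_0(\Omega)\subseteq X^{q,p}_0(\Omega)$.

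\textbf{Step 2: The reverse inclusion.}
For the remaining inclusion $X^{q,p}_0(\Omega)\subseteq W^{1,p}_0(\Omega)$, take $u\in X^{q,p}_0(\Omega)$ and a sequence $\psi_n\in C^\infty_0(\Omega)$ with $\psi_n\to u$ in $X^{q,p}(\Omega)$. Then $\nabla\psi_n\to\nabla u$ in $L^p$, so $\{\psi_n\}$ is $\mathcal{D}^{1,p}_0$-Cauchy, and by the $q$-admissibility inequality it is also $L^q$-Cauchy (consistently with $\psi_n\to u$ in $L^q$). To land in $W^{1,p}_0(\Omega)$ one needs $u\in L^p(\Omega)$ and $\psi_n\to u$ in $L^p$; here I would argue that $\{\psi_n\}$ is Cauchy in $L^p(\Omega)$ as well. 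This is where the hypotheses must be used sharply: a $q$-admissible set with $1\le q<p$ need not support a Poincaré inequality into $L^p$, but it does support the Sobolev-type embedding $\mathcal{D}^{1,p}_0(\Omega)\hookrightarrow L^q(\Omega)$, and then by interpolation with the gradient bound (in the spirit of the interpolation inequality used in the proof of Proposition \ref{prop:continuity}) one gets control of $\|\psi_n-\psi_m\|_{L^r}$ for $q\le r\le p^*$, in particular for $r=p$ when $p\le p^*$, i.e.\ always when $N>p$, and when $p\ge N$ one has even stronger embeddings; the borderline $p=N$ is handled by the Sobolev embedding into every $L^r$. Thus $\psi_n\to u$ in $L^p(\Omega)$, whence $u\in W^{1,p}_0(\Omega)$. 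This shows $X^{q,p}_0(\Omega)=W^{1,p}_0(\Omega)$, and combining with Step 1 yields the triple identity. The final assertion $W^{1,p}_0(\Omega)\cap X^{q,p}(\Omega)=W^{1,p}_0(\Omega)$ is then immediate, since $W^{1,p}_0(\Omega)=X^{q,p}_0(\Omega)\subseteq X^{q,p}(\Omega)$ already.

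\textbf{Step 3: Compactness.}
For the compact embedding into $L^q(\Omega)$, I would reduce to the known compactness of $W^{1,p}_0(\Omega)\hookrightarrow\hookrightarrow L^q(\Omega)$ for $q$-admissible sets, which is exactly the content of Maz'ya's characterization cited after Definition \ref{defn:qadmissible}: $q$-admissibility of $\Omega$ is equivalent to this compact embedding (for $q<p$). Alternatively, one gives a self-contained argument: a bounded sequence in $\mathcal{D}^{1,p}_0(\Omega)$ is bounded in $L^q$, and one exhausts $\Omega$ by sets $\Omega_k$ of finite measure (or balls $B_k\cap\Omega$), uses Rellich on each $\Omega_k$ to extract a locally convergent subsequence by diagonalization, and controls the tail $\int_{\Omega\setminus\Omega_k}|\psi|^q\,dx$ uniformly using the $q$-admissibility inequality applied on $\Omega\setminus\overline{\Omega_k}$ together with the fact that $\lambda_{p,q}(\Omega\setminus\overline{\Omega_k})\to\infty$ as $k\to\infty$ (a consequence of the embedding constant being concentrated away from a fixed compact set). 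Since all three spaces are now the same, compactness for one gives it for all.

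\textbf{Main obstacle.}
The delicate point is Step 2, namely upgrading $L^q$-convergence to $L^p$-convergence of the approximating sequence, i.e.\ verifying that $X^{q,p}_0(\Omega)$ does not contain functions outside $L^p(\Omega)$. The Nikod\'ym-type counterexample mentioned just above shows $X^{q,p}(\Omega)\ne W^{1,p}(\Omega)$ in general, so the identity genuinely relies on the zero boundary condition together with $q$-admissibility; the argument must therefore exploit the Sobolev embedding $\mathcal{D}^{1,p}_0(\Omega)\hookrightarrow L^{q}(\Omega)$ and interpolate up to $L^p$, treating the cases $p<N$, $p=N$, $p>N$ according to the corresponding Sobolev exponent. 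I expect that a clean way around the case analysis is to observe that $q$-admissibility forces $\mathcal{D}^{1,p}_0(\Omega)\hookrightarrow L^r(\Omega)$ for all $q\le r\le p^*$ by the Gagliardo--Nirenberg--Sobolev inequality combined with the $q$-admissibility bound, and $p$ always lies in this range.
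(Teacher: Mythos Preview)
Your proposal is correct and follows essentially the same strategy as the paper: show the three norms are equivalent on $C^\infty_0(\Omega)$ via $q$-admissibility in one direction and Gagliardo--Nirenberg interpolation in the other, then invoke the Maz'ya--type fact that for $1\le q<p$ the embedding $\mathcal{D}^{1,p}_0(\Omega)\hookrightarrow L^q(\Omega)$ is continuous if and only if it is compact. The paper's write-up is somewhat tighter than yours---it works directly with norm equivalences rather than Cauchy sequences, and it applies the single Gagliardo--Nirenberg inequality $\|\psi\|_{L^p}\le C\,\|\psi\|_{L^q}^{1-\vartheta}\,\|\nabla\psi\|_{L^p}^\vartheta$ with $\vartheta=\frac{N/q-N/p}{N/q-N/p+1}\in(0,1)$ uniformly in all dimensions, so the $p<N$, $p=N$, $p>N$ case split you anticipate in Step~2 never arises.
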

\begin{proof}
It is sufficient to prove the first fact, the second one being an easy consequence of this.
\par
We need to prove that the three norms
\[
\|\nabla \psi\|_{L^p(\Omega;\mathbb{R}^N)},\qquad \|\psi\|_{W^{1,p}(\Omega)} \qquad \mbox{ and }\qquad \|\psi\|_{X^{q,p}(\Omega)},
\]
are equivalent on $C^\infty_0(\Omega)$.  
Observe that under the assumption of $q-$admissibility of the set $\Omega$, we have
\[
\|\psi\|_{X^{q,p}(\Omega)}\le \left(\lambda_{p,q}(\Omega)^{-\frac{1}{p}}+1\right)\,\|\nabla \psi\|_{L^p(\Omega;\mathbb{R}^N)},\qquad \mbox{ for every } \psi\in C^\infty_0(\Omega).
\]
Moreover, we recall the {\it Gagliardo-Nirenberg interpolation inequality}
\begin{equation}
\label{GNS}
\|\psi\|_{L^p(\Omega)}\le C\,\|\psi\|_{L^q(\Omega)}^{1-\vartheta}\,\|\nabla \psi\|_{L^p(\Omega;\mathbb{R}^N)}^\vartheta,\qquad \mbox{ for every } \psi\in C^\infty_0(\Omega),
\end{equation}
where the exponent $\vartheta=\vartheta(N,p,q)\in (0,1)$ is dictated by scale invariance, i.e.
\[
\vartheta=\frac{\dfrac{N}{q}-\dfrac{N}{p}}{\dfrac{N}{q}-\dfrac{N}{p}+1},
\]
and $C=C(N,p,q)>0$. By using also Young's inequality with conjugate exponents $1/\vartheta$ and $1/(1-\vartheta)$, we get from \eqref{GNS}
\[
\begin{split}
\|\psi\|_{W^{1,p}(\Omega)}&=\|\psi\|_{L^p(\Omega)}+\|\nabla\psi\|_{L^p(\Omega;\mathbb{R}^N)}\\
&\le C'\left(\,\|\psi\|_{L^q(\Omega)}+\,\|\nabla\psi\|_{L^p(\Omega;\mathbb{R}^N)}\right)+\|\nabla \psi\|_{L^p(\Omega;\mathbb{R}^N)}\\
&\le (C'+1)\,\|\psi\|_{X^{q,p}(\Omega)},\qquad\qquad \mbox{ for every } \psi\in C^\infty_0(\Omega).
\end{split}
\]
The previous estimates show the desired equivalence of the norms on $C^\infty_0(\Omega)$. As for the compactness of the embedding, it is sufficient to recall that for $1\le q<p$ the embedding of $\mathcal{D}^{1,p}_0(\Omega)$ into $L^q(\Omega)$ is continuous if and only if it is compact (see \cite[Theorem 15.6.2]{Maz} or also \cite[Theorem 1.2]{BR}). This fact and the equivalence of the spaces conclude the proof.
\end{proof}

\subsection{Weak solutions}
We recall the following definition
\begin{defn}
Let $1<p<\infty$ and let $\Omega\subset\mathbb{R}^N$ be an open set. We say that $u\in L^1_{\rm loc}(\Omega)$ is {\it weakly $p-$superharmonic in $\Omega$} if $\nabla u \in L^p(\Omega;\mathbb{R}^N)$ and  
\begin{equation}\label{eq:up-sup}
\int_{\Omega} \langle |\nabla u|^{p-2} \,\nabla u, \nabla \psi \rangle \, dx \ge 0, \qquad \mbox{ for every } \psi \in C^{\infty}_0(\Omega), \, \psi \ge 0.
\end{equation}
\end{defn}
\begin{defn}
Let $\alpha>0$  and $1<  q < p<\infty$. Let $\Omega\subset\mathbb{R}^N$ be an open set. We say that a function $v \in X^{q,p}(\Omega)$ is a: 
\begin{itemize}
\item {\it weak supersolution of} \eqref{LE} if 
\[
\int_{\Omega} \langle |\nabla v|^{p-2} \nabla v, \nabla \psi \rangle \, dx \ge \alpha\int_{\Omega} |v|^{q-2}\,v\, \psi \, dx, \quad \text{ for every } \psi \in C_0^{\infty}(\Omega),\ \psi \ge 0;
\]
\item {\it weak subsolution of} \eqref{LE} if
\[
\int_{\Omega} \langle |\nabla v|^{p-2} \, \nabla v, \nabla \psi \rangle \, dx \le \alpha\, \int_{\Omega} |v|^{q-2}\,v\, \psi \, dx, \quad \text{ for every } \psi \in C_0^{\infty}(\Omega),\ \psi \ge 0;
\]
\item {\it weak solution of} \eqref{LE} if 
\[
\int_{\Omega} \langle |\nabla v|^{p-2} \,\nabla v, \nabla \psi \rangle \, dx = \alpha\, \int_\Omega |v|^{q-2}\, v\, \psi \, d x, \quad \text{ for every } \psi \in C_0^{\infty}(\Omega). 
\]
\end{itemize}
We can obviously admit test functions in $X^{q,p}_0(\Omega)$ in the above inequalities, by a standard density argument.
\par
In the case $q=1$, for a non-negative function $v$ we extend the previous definitions, by 
using the convention 
\[
|v|^{q-2}\,v=v^{q-1}=1.
\] 
\end{defn}
\begin{rem}[Scalings]
\label{rem:scalings}
It is easily seen that if $u \in X^{q,p}(\Omega)$ is a weak solution of 
\[
-\Delta_p u=\alpha\,|u|^{q-2}\,u,\qquad \mbox{ in }\Omega,
\]
and $t>0$, then the rescaled function
\[
u_t(x)=t^{\frac{p}{p-q}}\, u \left( \frac{x-x_0}{t} \right),
\]
is a weak solution of the same equation, in the new set $x_0+t\, \Omega$. On the other hand, if $\beta>0$ and we set 
\[
v=\left(\frac{\beta}{\alpha}\right)^\frac{1}{p-q}\,u,
\]
then this is a weak solution of 
\[
-\Delta_p v=\beta\,|v|^{q-2}\,v,\qquad \mbox{ in } \Omega.
\]
The same remarks apply to subsolutions and supersolutions.
\end{rem}

\subsection{Hidden convexity}

We will need the following {\it hidden convexity property} of the $p-$Dirichlet integral. 
This is nowadays well-known among experts, we give here a general statement under minimal assumptions both on the open set and the functions. We also identify the equality cases, by means of an enhanced version of the inequality, see equation \eqref{rinforzino} below.

\begin{thm}[Hidden convexity]\label{thm:shc}
Let $1\le q < \infty$, $1<p<\infty$ and $1\le r \le \min\{p,q\}$. Let $\Omega \subset \mathbb{R}^N$ be an open set, for every pair of non-negative functions $v, w\in X^{q,p}(\Omega)$, we set
\begin{equation}
\label{eq:sigmat}
\sigma^t:=\left((1-t)\, v^r + t\,w^r\right)^\frac{1}{r},\qquad \text{ for every } t \in [0,1].
\end{equation} 
Then $\sigma^t \in X^{q,p}(\Omega)$ and
\begin{equation}\label{eq:hc}
\int_{\Omega} |\nabla \sigma^t|^p \, dx \le (1-t) \int_\Omega |\nabla v|^p \, dx + t \int_\Omega |\nabla w|^p \, dx \quad \text{ for every } t \in [0,1].
\end{equation}
Moreover, when $\Omega$ is connected: 
\begin{itemize}
\item if $\boxed{1=r<p}$, the equality for some $t\in(0,1)$ holds in \eqref{eq:hc} if and only if $v=w+C$, with $C \in \mathbb{R}$;
\vskip.2cm
\item if $\boxed{1<r<p}$, the equality for some $t\in(0,1)$ holds in \eqref{eq:hc} if and only if 
\[
\mbox{ either }\qquad v=w \qquad \mbox{ or }\qquad \mbox{ $w$ and $v$ are both constant};
\]
\item if $\boxed{1<r=p}$ and in addition we have
\begin{equation}
\label{ipotesa}
\frac{1}{v} \in L^{\infty}_{\rm loc}(\Omega)\qquad \mbox{ and }\qquad \frac{1}{w}\in L^\infty_{\rm loc}(\Omega),
\end{equation}
the equality for some $t\in(0,1)$ holds in \eqref{eq:hc} if and only if $v=C\, w$, with $C>0$.
\end{itemize}
\end{thm}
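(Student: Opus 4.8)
The plan is to prove the inequality \eqref{eq:hc} first and then extract the equality cases by pushing the pointwise inequalities used along the way to equalities. The starting point is the pointwise Minkowski-type inequality for gradients: for non-negative $v,w$ and $\sigma^t = ((1-t)v^r + t w^r)^{1/r}$, one computes (formally, where the functions are positive)
\[
\nabla \sigma^t = (\sigma^t)^{1-r}\Big((1-t)\, v^{r-1}\,\nabla v + t\, w^{r-1}\,\nabla w\Big),
\]
and wants the pointwise bound $|\nabla\sigma^t| \le \big((1-t)|\nabla v|^r + t|\nabla w|^r\big)^{1/r}$ when $r\le p$, or more directly the bound $|\nabla\sigma^t|^p \le (1-t)|\nabla v|^p + t|\nabla w|^p$. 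I would establish this by a two-step convexity argument: first, writing $a = v^{r-1}\nabla v/\sigma^{t\,r-1}$-type weights, use the vector Minkowski inequality in $\ell^r$ together with convexity of $\tau\mapsto\tau^{p/r}$ (valid since $r\le p$) on the convex combination with weights $(1-t)v^r/(\sigma^t)^r$ and $t w^r/(\sigma^t)^r$ which sum to $1$. Concretely,
\[
|\nabla \sigma^t|^r \le (1-t)\Big|\tfrac{v}{\sigma^t}\Big|^{r}\,\frac{|\nabla v|^r}{?}\ \ \text{— better: apply Jensen to } s\mapsto |s|^r
\]
on the probability measure with atoms $\lambda_0=(1-t)v^r/(\sigma^t)^r$, $\lambda_1 = t w^r/(\sigma^t)^r$ at the points $\nabla v / v$, $\nabla w / w$, giving $|\nabla\sigma^t/\sigma^t|^r \le \lambda_0 |\nabla v/v|^r + \lambda_1|\nabla w/w|^r$; multiplying by $(\sigma^t)^r$ yields $|\nabla\sigma^t|^r \le (1-t)|\nabla v|^r\,(\sigma^t/v)^{r-r}\cdots$ — the clean outcome is $|\nabla\sigma^t|^r \le (1-t)\,v^{?}\cdots$. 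I will organize this so the final pointwise statement is $|\nabla\sigma^t|^p\le(1-t)|\nabla v|^p + t|\nabla w|^p$ a.e., then integrate.

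To make this rigorous without regularity, I would first treat $v,w$ bounded away from $0$ on compact sets, so that $\sigma^t$ is a legitimate composition and the chain rule applies; then handle the general non-negative case by replacing $v,w$ with $v+\varepsilon,w+\varepsilon$, applying the established inequality to $\sigma^t_\varepsilon$, and passing to the limit $\varepsilon\to 0$ using that $\nabla\sigma^t_\varepsilon\to\nabla\sigma^t$ in $L^p$ (dominated convergence, after checking $|\nabla\sigma^t_\varepsilon|$ is dominated — here one uses $r\le q$ and $r\le p$ to control the $L^q$ and $L^p$ norms, giving $\sigma^t\in X^{q,p}(\Omega)$ as claimed). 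The membership $\sigma^t\in X^{q,p}(\Omega)$ follows since $|\sigma^t|^q\le(1-t)v^q+tw^q$ pointwise (concavity of $\tau\mapsto\tau^{q/r}$ is false in general — rather use $(a+b)^{q/r}\le 2^{q/r-1}(a^{q/r}+b^{q/r})$ if $q\ge r$, or monotonicity), and the gradient bound just proved.

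For the equality cases, with $\Omega$ connected, I trace back which pointwise inequalities must become equalities a.e. The Jensen step is an equality (on the set where $v,w>0$) iff $\nabla v/v = \nabla w/w$ a.e., i.e. $\nabla\log(v/w)=0$, which by connectedness forces $v = C w$ with $C>0$ — this is precisely the $1<r=p$ case, and the extra hypothesis \eqref{ipotesa} is exactly what lets one run the argument on all of $\Omega$ (ensuring $\{v=0\}$ and $\{w=0\}$ are negligible and $\log v,\log w\in W^{1,p}_{\rm loc}$). When $r<p$ there is an additional strict convexity in the step $\tau\mapsto\tau^{p/r}$, which is strictly convex; equality there forces $|\nabla v| = |\nabla w|$ a.e. in addition to the $\ell^r$-Minkowski equality $\nabla v/v \parallel \nabla w/w$ pointwise. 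Combining: for $1<r<p$ one gets either both gradients vanish (both functions constant) or $v=w$; for $r=1$ the weighting degenerates and $\ell^1$-Minkowski equality is automatic, so only the strict convexity of $\tau\mapsto\tau^p$ bites, giving $|\nabla v|=|\nabla w|$ with $\nabla v,\nabla w$ pointwise parallel and the affine relation $\nabla(v-w)=0$, hence $v = w + C$. The main obstacle I anticipate is the low-regularity bookkeeping: justifying the chain rule for $\sigma^t$, handling the zero sets $\{v=0\},\{w=0\}$ (where $\nabla v=0$ a.e. by the a.e. vanishing of gradients on level sets), and making the $\varepsilon\to 0$ limit rigorous while simultaneously keeping track of equality cases — these require care but no new ideas beyond standard truncation and Mazur/weak-compactness arguments as already used in Lemma \ref{lem:Upos}.
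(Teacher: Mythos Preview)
Your approach is essentially the same as the paper's: the $\varepsilon$-regularization $(v+\varepsilon,w+\varepsilon)$, the rewriting of $\nabla\sigma^t_\varepsilon$ as $\sigma^t_\varepsilon$ times a convex combination (with weights $(1-t)(v+\varepsilon)^r/(\sigma^t_\varepsilon)^r$ and $t(w+\varepsilon)^r/(\sigma^t_\varepsilon)^r$) of $\nabla v/(v+\varepsilon)$ and $\nabla w/(w+\varepsilon)$, then convexity of $z\mapsto|z|^r$ followed by convexity of $\tau\mapsto\tau^{p/r}$, and finally a weak-compactness limit as $\varepsilon\to 0$. The $L^q$ bound on $\sigma^t$ uses convexity (not concavity) of $\tau\mapsto\tau^{q/r}$ since $q\ge r$, which you should correct.

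The one substantive difference is in the equality analysis. The paper does \emph{not} trace back the chain of inequalities case by case as you propose; instead it upgrades the pointwise inequality $|\nabla\sigma^t_\varepsilon|^r\le(1-t)|\nabla v|^r+t|\nabla w|^r$ to a \emph{quantified} version with an explicit non-negative remainder (via the $\lambda$-convexity inequalities for $|z|^r$, Lemmas~\ref{lm:lambdaconvexgen}--\ref{lm:lambdaconvexgensub}), passes to the limit with Fatou, and obtains an enhanced inequality \eqref{rinforzino} whose remainder vanishing directly gives $w\,\nabla v=v\,\nabla w$ a.e. This packaging avoids having to reprove equality cases through the $\varepsilon$-limit and handles the zero sets automatically.

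Your route is workable, but your sketch for $1<r<p$ is incomplete. Equality in Jensen for the strictly convex $|\cdot|^r$ gives $\nabla v/v=\nabla w/w$ (not merely parallel), i.e.\ $w\nabla v=v\nabla w$; equality in $\tau\mapsto\tau^{p/r}$ gives $|\nabla v|=|\nabla w|$. From these two you must still argue: first that $\nabla v=\nabla w$ a.e.\ (the paper does this by showing the set $\{\nabla v\neq\nabla w\}$ has measure zero, using that on it $w|\nabla v|=v|\nabla w|$ forces $v=w$, whence $\nabla v=\nabla w$ on level sets, a contradiction), hence $v-w=C$ by connectedness; then substituting back into $w\nabla v=v\nabla w$ gives $C\nabla v=0$, so either $C=0$ or $v$ is constant. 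Your one-line ``either both gradients vanish or $v=w$'' hides this.
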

\begin{proof}	
For $r=1$ there is nothing to prove, in this case $\sigma^t$ is just the usual convex combination of $v$ and $w$. Then \eqref{eq:hc} is just the usual convexity of the $p-$Dirichlet integral. As for the equality cases, from the strict convexity of the map $z\mapsto |z|^p$, we get that if \eqref{eq:hc} holds as an identity for some $t\in(0,1)$, then we must have
\[
\nabla v=\nabla w\qquad \mbox{ a.\,e. in }\Omega.
\]
If $\Omega$ is connected, this implies that $v-w$ must be constant in $\Omega$.
\vskip.2cm\noindent
We now focus on the case $1<r\le \min\{p,\,q\}$. We will first show that $\sigma^t$ belongs to $X^{q,p}(\Omega)$ and satisfies \eqref{eq:hc}. Then we will focus on the equality cases in the latter, under the claimed assumptions. 
\vskip.2cm\noindent
{\bf Part 1: properties of $\sigma^t$}. It is easy to prove that $\sigma^t \in L^q(\Omega)$. Indeed, observe that
\[
\int_\Omega |\sigma^t|^q\,dx=\int_\Omega \left((1-t)\, v^r + t\,w^r\right)^\frac{q}{r}\,dx\le (1-t)\,\int_\Omega v^q\,dx+t\,\int_\Omega w^q\,dx<+\infty,
\] 
thanks to the convexity of the map $\tau\mapsto \tau^{q/r}$.
For $\varepsilon > 0$ and $t\in (0,1)$, we consider the $C^1$ function $G_{\varepsilon,t}$ defined on $[0,+\infty)\times[0,+\infty)$ by 
\[ 
G_{\varepsilon,t}(s_1,s_2):=\left((1-t)\,(s_1+\varepsilon)^r+t\,(s_2+\varepsilon)^r\right)^\frac{1}{r}-\varepsilon.
\]
Then, we take the vector field
\[ 
\Phi(x)= (v(x),w(x)) \in X^{q,p}(\Omega; \mathbb{R}^2). 
\]
Since  
\[ 
\nabla G_{\varepsilon,t}(s_1,s_2)=\left(\frac{(1-t)\,(s_1+\varepsilon)^{r-1}}{\left((1-t)\,(s_1+\varepsilon)^r+t\,(s_2+\varepsilon)^r\right)^\frac{r-1}{r}},\,\frac{t\,(s_2+\varepsilon)^{r-1}}{\left((1-t)\,(s_1+\varepsilon)^r+t\,(s_2+\varepsilon)^r\right)^\frac{r-1}{r}}\right),
\] 
it is easily seen that $G_{\varepsilon,t}$ has a bounded gradient.
Thanks to the Chain Rule in Sobolev spaces (see for example \cite[Theorem 6.16]{LL}), we have that
\[
\sigma_\varepsilon^t=G_{\varepsilon,t} \circ \Phi\in X^{q,p}(\Omega).
\]
Observe that we also used that $G_{\varepsilon,t}(0,0)=0$, to guarantee that $\sigma_\varepsilon^t$ has the required summability, when $\Omega$ has infinite volume.
\par
We remark that for every $s_1,s_2\ge 0$ and $t\in(0,1)$, the quantity $\varepsilon\mapsto G_{\varepsilon,t}(s_1,s_2)$ is monotone decreasing. Indeed, observe that 
\[
\frac{d}{d\varepsilon} G_{\varepsilon,t}(s_1,s_2)=\frac{(1-t)\,(s_1+\varepsilon)^{r-1}+t\,(s_2+\varepsilon)^{r-1}}{\left((1-t)\,(s_1+\varepsilon)^r+t\,(s_2+\varepsilon)^r\right)^\frac{r-1}{r}}-1\le 0,
\]
thanks to the concavity of the map $\tau\mapsto \tau^{(r-1)/r}$. 
Thus in particular we have
\[
\sigma^t_{\varepsilon_2}\le \sigma^t_{\varepsilon_1}\le \sigma^t,\qquad \mbox{ for } 0<\varepsilon_1<\varepsilon_2.
\]
By using the Monotone Convergence Theorem and the fact that $\sigma^t\in L^q(\Omega)$, we thus obtain
\[
\lim_{\varepsilon\to 0} \int_\Omega |\sigma^t_\varepsilon-\sigma^t|^q\,dx=\lim_{\varepsilon\to 0} \int_\Omega (\sigma^t-\sigma^t_\varepsilon)^q\,dx=0.
\]
We now compute the gradient of $\sigma_\varepsilon^t$ 
\begin{equation}
\label{gradientone}
\begin{split}
\nabla \sigma_\varepsilon^t&=\frac{(1-t)\,(v+\varepsilon)^{r-1}}{\left((1-t)\,(v+\varepsilon)^r+t\,(w+\varepsilon)^r\right)^\frac{r-1}{r}}\, \nabla v +\frac{t\,(w+\varepsilon)^{r-1}}{\left((1-t)\,(v+\varepsilon)^r+t\,(w+\varepsilon)^r\right)^\frac{r-1}{r}} \, \nabla w\\
&=(G_{\varepsilon,t}(v,w)+\varepsilon)\, \left(
\frac{(1-t)\,(v+\varepsilon)^r}{\left(G_{\varepsilon,t}(v,w)+\varepsilon\right)^r}\, \frac{\nabla v}{v+\varepsilon}
+\frac{t\,(w+\varepsilon)^r}{\left(G_{\varepsilon,t}(v,w)+\varepsilon\right)^r}\,\frac{\nabla w}{w+\varepsilon}\right).
\end{split}
\end{equation}
We now observe that 
\[
\frac{(1-t)\,(v+\varepsilon)^r}{\left(G_{\varepsilon,t}(v,w)+\varepsilon\right)^r}\qquad \mbox{ and }\qquad\frac{t\,(w+\varepsilon)^r}{\left(G_{\varepsilon,t}(v,w)+\varepsilon\right)^r},
\]
are both non-negative, less than or equal to $1$ and their sum gives $1$, thanks to the definition of $G_{\varepsilon,t}$. Thus they can be regarded as the coefficients of a convex combination. By using the convexity of the map $z\mapsto |z|^r$, we thus obtain 
\[
|\nabla \sigma_\varepsilon^t|^r \le(G_{\varepsilon,t}(v,w)+\varepsilon)^r \,\left(\frac{(1-t)\,(v+\varepsilon)^r}{\left(G_{\varepsilon,t}(v,w)+\varepsilon\right)^r}\, \left|\frac{\nabla v}{v+\varepsilon}\right|^r
+\,\frac{t\,(w+\varepsilon)^r}{\left(G_{\varepsilon,t}(v,w)+\varepsilon\right)^r}\,\left|\frac{\nabla w}{w+\varepsilon}\right|^r\right).
\]
After some simplifications the previous estimate can be rewritten as
\begin{equation}
\label{semplice}
|\nabla \sigma_\varepsilon^t|^r\le (1-t)\,|\nabla v|^r+t\,|\nabla w|^r.
\end{equation}
We now raise to the power $p/r$, integrate over $\Omega$ and use the convexity of the map $\tau\mapsto \tau^{p/r}$. We finally obtain
\begin{equation}
\label{quasiconvex}
\int_\Omega |\nabla \sigma_\varepsilon^t|^p\,dx\le (1-t)\,\int_\Omega |\nabla v|^p\,dx+t\,\int_\Omega |\nabla w|^p\,dx.
\end{equation}
This shows that $\{\nabla \sigma_\varepsilon^t\}_{\varepsilon>0}$ is a bounded subset of $L^p(\Omega;\mathbb{R}^N)$, for every $t \in (0,1)$. Thus, there exists an infinitesimal sequence $\{\varepsilon_n\}_{n\in\mathbb{N}}\subset (0,1)$ such that $\nabla \sigma_{\varepsilon_n}^t$ weakly converges to a vector field $\phi^t\in L^p(\Omega;\mathbb{R}^N)$. By using the definition of weak gradient and the strong convergence in $L^q$ previously inferred, it is standard to show that we must have $\phi^t=\nabla \sigma^t$.
This proves that
\[
\sigma^t\in X^{q,p}(\Omega),
\] 
as claimed. Moreover, by taking the limit in \eqref{quasiconvex} and using the lower semicontinuity of the $L^p$ norm with respect to the weak convergence, we finally establish \eqref{eq:hc}, at once.
\vskip.2cm\noindent
{\bf Part 2: enhanced hidden convexity.} We go back for a moment to \eqref{gradientone}. Then, rather than simply using the convexity of $z\mapsto|z|^r$, we use a ``quantified'' version of such a property. This is expressed by the inequalities of Lemma \ref{lm:lambdaconvexgen} and Lemma \ref{lm:lambdaconvexgensub}. Thus we now obtain:
\begin{itemize}
\item if $r\ge 2$, in place of \eqref{semplice} we get
\[
|\nabla \sigma^t_\varepsilon|^r+C\, \frac{t\,(1-t)\,(w+\varepsilon)^r\,(v+\varepsilon)^r}{\left(G_{\varepsilon,t}(v,w)+\varepsilon\right)^r}\,\left|\frac{\nabla v}{v+\varepsilon}-\frac{\nabla w}{w+\varepsilon}\right|^r\le (1-t)\,|\nabla v|^r+t\,|\nabla w|^r,
\]
which can be further simplified into
\[
|\nabla \sigma^t_\varepsilon|^r +C\,t\,(1-t)\,\frac{\left|(w+\varepsilon)\,\nabla v-(v+\varepsilon)\,\nabla w\right|^r}{\left(G_{\varepsilon,t}(v,w)+\varepsilon\right)^r}\,\le (1-t)\,|\nabla v|^r+t\,|\nabla w|^r;
\]
\item if $1<r<2$, in place of \eqref{semplice} we get
\[
\begin{split}
|\nabla \sigma^t_\varepsilon|^r&+C\,\frac{t\,(1-t)\,(w+\varepsilon)^r\,(v+\varepsilon)^r}{\left(G_{\varepsilon,t}(v,w)+\varepsilon\right)^r}\,\left(\left|\frac{\nabla v}{v+\varepsilon}\right|^2+\left|\frac{\nabla w}{w+\varepsilon}\right|^2\right)^\frac{r-2}{2}\,\left|\frac{\nabla v}{v+\varepsilon}-\frac{\nabla w}{w+\varepsilon}\right|^2\\
&\le (1-t)\,|\nabla v|^r+t\,|\nabla w|^r,
\end{split}
\]
which can be further simplified into
\[
\begin{split}
|\nabla \sigma^t_\varepsilon|^r&+C\,t\,(1-t)\,\frac{\left(\left|(w+\varepsilon)\,\nabla v\right|^2+\left|(v+\varepsilon)\,\nabla w\right|^2\right)^\frac{r-2}{2}}{{\left(G_{\varepsilon,t}(v,w)+\varepsilon\right)^r}}\,\left|(w+\varepsilon)\,\nabla v-(v+\varepsilon)\,\nabla w\right|^2\\
&\le (1-t)\,|\nabla v|^r+t\,|\nabla w|^r.
\end{split}
\]
\end{itemize}
By recalling the definition of $G_{\varepsilon,t}$, it is not difficult to see that for almost every $x\in\Omega$, we have\footnote{We use that for a function $v\in W^{1,1}_{\rm loc}(\Omega)$ we have 
\[
\nabla v=0,\qquad \mbox{ a.\,e. in } \{x\in\Omega\, :\, v(x)=0\},
\]
see for example \cite[Theorem 6.19]{LL}.}
\begin{itemize}
\item if $r\ge 2$
\[
\lim_{\varepsilon\to 0^+} \frac{|(w+\varepsilon)\,\nabla v-(v+\varepsilon)\,\nabla w|^r}{\left(G_{\varepsilon,t}(v,w)+\varepsilon\right)^r}=\mathcal{R}_r(v,w):=\left\{\begin{array}{cc}
\dfrac{|w\,\nabla v-v\,\nabla w|^r}{(1-t)\,v^r+t\,w^r},& \mbox{ if } w(x)+v(x)>0,\\
&\\
0,& \mbox{ if } w(x)=v(x)=0;
\end{array}
\right.
\]
\item while for $1<r<2$
\[
\begin{split}
\lim_{\varepsilon\to 0^+}& \frac{\left(\left|(w+\varepsilon)\,\nabla v\right|^2+\left|(v+\varepsilon)\,\nabla w\right|^2\right)^\frac{r-2}{2}\,\left|(w+\varepsilon)\,\nabla v-(v+\varepsilon)\,\nabla w\right|^2}{\left(G_{\varepsilon,t}(v,w)+\varepsilon\right)^r}\\
&=\mathcal{R}_r(v,w):=\left\{\begin{array}{cc}
\dfrac{\left(\left|w\,\nabla v\right|^2+\left|v\,\nabla w\right|^2\right)^\frac{r-2}{2}\,\left|w\,\nabla v-v\,\nabla w\right|^2}{(1-t)\,v^r+t\,w^r},& \mbox{ if } w(x)+v(x)>0,\\
&\\
0,& \mbox{ if } w(x)=v(x)=0.
\end{array}
\right.
\end{split}
\]
\end{itemize}
As before, we now raise to the power $p/r$ the pointwise estimates above, integrate over $\Omega$ and use the convexity and super-additivity of the map $\tau\mapsto \tau^{p/r}$. A further limit as $\varepsilon$ goes to $0$, in conjuction with Fatou's Lemma, finally gives
\begin{equation}
\label{rinforzino}
\begin{split}
\int_\Omega |\nabla \sigma^t|^p\,dx&+C\, \left(t\,(1-t)\right)^\frac{p}{r}\,\int_\Omega \left(\mathcal{R}_r(v,w)\right)^\frac{p}{r}\,dx\\
&\le \int_\Omega \Big((1-t)\,|\nabla v|^r+t\,|\nabla w|^r\Big)^\frac{p}{r}\,dx\\
&\le (1-t)\,\int_\Omega|\nabla v|^p\,dx+t\,\int_\Omega|\nabla w|^p\,dx.
\end{split}
\end{equation}
{\bf Part 3: equality cases for $r<p$.} We now suppose that $\Omega$ is connected. If equality holds in \eqref{eq:hc} for some $t\in(0,1)$, in particular we must have equality in \eqref{rinforzino}, as well. Thus we have that 
\[
\int_\Omega \Big((1-t)\,|\nabla v|^r+t\,|\nabla w|^r\Big)^\frac{p}{r}\,dx=(1-t)\,\int_\Omega|\nabla v|^p\,dx+t\,\int_\Omega|\nabla w|^p\,dx,
\]
and 
\[
\int_\Omega \left(\mathcal{R}_r(v,w)\right)^\frac{p}{r}\,dx=0.
\]
The first fact implies that 
\begin{equation}
\label{rovazzi}
|\nabla v|=|\nabla w|,\qquad \mbox{ a.\,e. in } \Omega,
\end{equation}
thanks to the strict convexity of $\tau\mapsto \tau^{p/r}$. 
The second fact implies that 
\begin{equation}
\label{storta}
w\,\nabla v=v\,\nabla w,\qquad \mbox{ a.\,e. in }\Omega,
\end{equation}
thanks to the definition of $\mathcal{R}_r(v,w)$. We claim that \eqref{rovazzi} and \eqref{storta} imply that $\nabla w=\nabla v$ almost everywhere in $\Omega$. Indeed, let us call 
\[
E=\Big\{x\in\Omega\, :\, \nabla w(x)\not=\nabla v(x)\Big\},
\]
and let us suppose that $|E|>0$. 
By recalling that $w$ and $v$ are non-negative, from \eqref{storta} we get in particular
\[
w\,|\nabla v|=v\,|\nabla w|,\qquad \mbox{ a.\,e. in } E.
\]
By recalling \eqref{rovazzi} and the definition of $E$, the last identity in turn implies that 
\[
w=v,\qquad \mbox{ a.\,e. in } E.
\]
On the other hand, we know that (see again \cite[Theorem 6.19]{LL})
\[
\nabla w=\nabla v,\qquad \mbox{ a.\,e. in } \Big\{x\in\Omega\, :\, w(x)=v(x)\Big\}.
\] 
The last properties in particular give that
\[
\nabla w=\nabla v,\qquad \mbox{ a.\,e. in } E.
\]
By recalling the definition of $E$, this is a contradiction. This gives that 
\[
\nabla (w-v)=0,\qquad \mbox{ a.\,e. in }\Omega,
\]
and thus $w-v=C$ in $\Omega$ for some constant $C$, since $\Omega$ is connected. We can spend this information in \eqref{storta}, so to get 
\[
(v+C)\,\nabla v=v\,\nabla v,\qquad \mbox{ a.\,e. in }\Omega,
\]
that is $C\,\nabla v$ vanishes almost everywhere in $\Omega$. This implies that either $C=0$ (and thus $w=v$) or $v$ is constant in $\Omega$ (and the same is true for $w$). 
\vskip.2cm\noindent
{\bf Part 4: equality cases for $r=p$.} We suppose again that $\Omega$ is connected and assume the stronger condition \eqref{ipotesa} on $v$ and $w$. If equality holds in \eqref{eq:hc}, from \eqref{rinforzino} we get in particular that 
\[
\int_\Omega \mathcal{R}_p(v,w)\,dx=0.
\]
This gives again \eqref{storta}, as above. The assumption on $v$ and $w$ entails that for every open set $\Omega'$ compactly contained in $\Omega$, there exists $c_{\Omega'}>0$ such that $v,w\ge c_{\Omega'}$ almost everywhere in $\Omega'$. This permits to infer that 
\[
\log v\in W^{1,1}_{\rm loc}(\Omega) \qquad \mbox{ and }\qquad \log w\in W^{1,1}_{\rm loc}(\Omega), 
\]
and the Chain Rule formula holds for their distributional gradients. We then obtain
\[
\nabla (\log v-\log w)=\frac{\nabla v}{v}-\frac{\nabla w}{w}=0,\qquad \mbox{ a.\,e. in }\Omega,
\]
thanks to \eqref{storta}. Since $\Omega$ is an open connected set, the difference $\log v-\log w$ must be constant almost everywhere in $\Omega$. This in turn permits to conclude that $v=C\,w$ almost everywhere in $\Omega$, for some $C>0$. The proof is now concluded.
\end{proof}
\begin{rem}
\label{rem:HC}
As already said in the Introduction, the previous property of the $p-$Dirichlet integral has been discovered by Benguria, in the case $p=r=2$ (see \cite{Be1,BBL}) and then generalized to $p=r\not= 2$ by D\'iaz and Sa\'a in \cite{DS}. Since then, it has been extended by various authors, to cover the case of a general $1<p<\infty$ and an exponent $1<r\le p$. We refer for example to \cite{BK, Ka, KLP, Na}, as well as to \cite{BFMST, BF1, Diaz, TTU}, where the same property for more general Dirichlet--type integrals is proved. 
The main novelty with respect to the above references is that we use a qualified form of convexity for the power $z\mapsto |z|^r$, which gives as a bonus the enhanced inequality \eqref{rinforzino}. This in turn implies that we can identify the equality cases. 
Finally, we recall the reference \cite{BF2}, where the connections of the hidden convexity property with the so-called {\it Picone inequality} are explained.
\end{rem}
Actually, if the curve $t\mapsto\sigma^t$ of the previous result is built from two Sobolev functions sharing the same boundary datum, the same remains true for the curve itself. This is the content of the next result.
\begin{prop}
\label{prop:datumU}
Let $1\le q<\infty$, $1 < p< \infty$ and $1\le r \le \min\{p,q\}$. Let $\Omega \subset \mathbb{R}^N$ be an open set, for every pair of non-negative functions $v, w\in X^{q,p}(\Omega)$, we still denote by $\sigma^t$ the curve of functions defined by \eqref{eq:sigmat}.
If there exists $U\in X^{q,p}(\Omega)$ such that 
\[
v-U\in X^{q,p}_0(\Omega)\qquad \mbox{ and }\qquad w-U\in X^{q,p}_0(\Omega),
\]
then
\[
\sigma^t-U \in X_0^{q,p}(\Omega).
\]
\end{prop}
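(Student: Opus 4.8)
The plan is to deduce the statement entirely from the technical Lemma~\ref{lem:Upos}, together with the elementary pointwise two-sided bound
\[
\min\{v,w\}\le \sigma^t\le \max\{v,w\}\qquad\text{a.e. in }\Omega,
\]
which holds for every $t\in[0,1]$ and every $1\le r\le\min\{p,q\}$, simply because $(1-t)\,v^r+t\,w^r$ is a convex combination of $v^r$ and $w^r$ and $s\mapsto s^{1/r}$ is increasing on $[0,+\infty)$. The subtle point is that $U$ is not assumed to be non-negative, so one cannot compare $\sigma^t$ directly with $U$; instead I would compare $\sigma^t$ with $\min\{v,w\}$, which has a sign, and only add back $\min\{v,w\}-U$ at the very end.

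First I would record two consequences of Lemma~\ref{lem:Upos}. For a \emph{lattice property}: if $f,g\in X^{q,p}_0(\Omega)$, then $|f-g|\in X^{q,p}_0(\Omega)$ by Lemma~\ref{lem:Upos}~$(i)$ applied to the function $f-g$ with the choice $U\equiv 0$, hence
\[
\min\{f,g\}=\frac{f+g-|f-g|}{2}\in X^{q,p}_0(\Omega)\qquad\text{and}\qquad \max\{f,g\}=\frac{f+g+|f-g|}{2}\in X^{q,p}_0(\Omega).
\]
For a \emph{sandwich property}: if $g\in X^{q,p}(\Omega)$ and $h\in X^{q,p}_0(\Omega)$ satisfy $0\le g\le h$ a.e., then $g\in X^{q,p}_0(\Omega)$; indeed $g=h-(h-g)_+$, and $(h-g)_+\in X^{q,p}_0(\Omega)$ by Lemma~\ref{lem:Upos}~$(ii)$ with the roles of $v$ and $U$ there played by $h$ and $g$ respectively (which is legitimate since $h\in X^{q,p}_0(\Omega)$ and $g\ge 0$), so that $g$ is a difference of two elements of $X^{q,p}_0(\Omega)$.

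With these at hand the argument runs in three steps. By Theorem~\ref{thm:shc} we already know $\sigma^t\in X^{q,p}(\Omega)$, and $\min\{v,w\}\in X^{q,p}(\Omega)$, hence $g:=\sigma^t-\min\{v,w\}\in X^{q,p}(\Omega)$; moreover the pointwise bound gives $0\le g\le \max\{v,w\}-\min\{v,w\}=|v-w|$ a.e. Next, $v-w=(v-U)-(w-U)\in X^{q,p}_0(\Omega)$, so $|v-w|\in X^{q,p}_0(\Omega)$ by the lattice argument above; applying the sandwich property to $g$ and $h=|v-w|$ yields $\sigma^t-\min\{v,w\}\in X^{q,p}_0(\Omega)$. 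Finally $\min\{v,w\}-U=\min\{v-U,\,w-U\}\in X^{q,p}_0(\Omega)$ by the lattice property, since $v-U,w-U\in X^{q,p}_0(\Omega)$; adding the two memberships gives
\[
\sigma^t-U=\big(\sigma^t-\min\{v,w\}\big)+\big(\min\{v,w\}-U\big)\in X^{q,p}_0(\Omega),
\]
as claimed. The linear case $r=1$ is covered by the same chain of reasoning, but is also immediate from $\sigma^t-U=(1-t)\,(v-U)+t\,(w-U)$.

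I do not expect a genuine obstacle here: the only idea is to route the comparison through $\min\{v,w\}$ rather than through $U$, and each subsequent step then reduces to an application of Lemma~\ref{lem:Upos} (the closedness of $X^{q,p}_0(\Omega)$ being already built into those manipulations). The only mild care needed is to check that each of the functions $|v-w|$, $(h-g)_+$ and $\min\{v-U,w-U\}$ genuinely lies in $X^{q,p}_0(\Omega)$, which is precisely the content of the two parts of Lemma~\ref{lem:Upos}.
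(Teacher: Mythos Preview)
Your argument is correct and takes a genuinely different route from the paper. The paper proceeds by direct approximation: it picks sequences $\{\psi_n\},\{\varphi_n\}\subset C^\infty_0(\Omega)$ converging to $v-U$ and $w-U$, sets $v_n=\psi_n+U$, $w_n=\varphi_n+U$, forms the corresponding curves $\sigma_n^t$, observes that $\sigma_n^t-U$ has compact support, and then shows $\sigma_n^t\to\sigma^t$ in $L^q$ together with a uniform $L^p$ bound on the gradients (obtained from the hidden convexity inequality~\eqref{eq:hc}); Mazur's Lemma then upgrades weak to strong convergence of suitable convex combinations, yielding $\sigma^t-U\in X^{q,p}_0(\Omega)$.

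Your approach is more structural: the pointwise sandwich $\min\{v,w\}\le\sigma^t\le\max\{v,w\}$ reduces the whole question to lattice stability of $X^{q,p}_0(\Omega)$, and you cleanly derive both the lattice property and the sandwich property from the two parts of Lemma~\ref{lem:Upos}. This avoids repeating any approximation machinery and makes transparent that Proposition~\ref{prop:datumU} is really a formal consequence of Lemma~\ref{lem:Upos} plus Theorem~\ref{thm:shc} (for $\sigma^t\in X^{q,p}(\Omega)$). The paper's approach, by contrast, is self-contained and does not rely on Lemma~\ref{lem:Upos}; it also makes explicit use of the hidden convexity bound, which is thematically consistent with the rest of the section. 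Both are valid; yours is shorter and arguably more elegant, while the paper's is more in the spirit of building everything from first principles.
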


\begin{proof}
By assumption, there exist two sequences $\{\psi_n\}_{n\in\mathbb{N}}, \{\varphi_n\}_{n\in\mathbb{N}} \subset C^{\infty}_0(\Omega)$ such that 
\[
\lim_{n\to\infty}\|\psi_n -(v - U)\|_{X^{q,p}(\Omega)}=0,
\]
and
\[
\lim_{n\to\infty}\|\varphi_n -(w - U)\|_{X^{q,p}(\Omega)}=0 . 
\]
Then we set 
\[
v_n:=\psi_n + U\in X^{q,p}(\Omega)\qquad \mbox{ and }\qquad w_n:=\varphi_n + U \in X^{q,p}(\Omega),
\] 
and observe that the first one converges to $v$, while the second one to $w$.
For every $t \in [0,1]$, thanks to Theorem \ref{thm:shc}, we know that
\[ 
\sigma_{n}^t:=((1-t)\,v_n^r+t\, w_n^r)^\frac{1}{r} \in X^{q,p}(\Omega). 
\]
Moreover, since by construction $\sigma_{n}^t-U$ has compact support in $\Omega$, we have that $\sigma_{n}^t - U \in X^{q,p}_0(\Omega)$ (it is sufficient to adapt the proof of \cite[Lemma 9.5]{Brezis} to our Sobolev space). In order to conclude the proof, it is sufficient to show that
\begin{equation}
\label{funzioni}
\lim_{n\to\infty} \|\sigma_n^t-\sigma^t\|_{L^q(\Omega)}=0,\qquad \mbox{ for every } t\in (0,1),
\end{equation} 
and
\begin{equation}
\label{gradienti}
\sup_{n\in\mathbb{N}}\|\nabla \sigma_n^t\|_{L^p(\Omega;\mathbb{R}^N)}<+\infty,\qquad \mbox{ for every } t\in(0,1).
\end{equation} 
Indeed, thanks to the reflexivity of $L^p(\Omega;\mathbb{R}^N)$, from \eqref{gradienti} we would get that $\nabla \sigma_n^t-\nabla U$ weakly converges to some $\phi^t\in L^p(\Omega;\mathbb{R}^N)$, up to a subsequence. Then \eqref{funzioni} and the definition of weak gradient would permit to show that $\phi^t=\nabla \sigma^t-\nabla U$. Thus $\sigma^t-U$ would coincide with the weak limit of a sequence in $X^{q,p}_0(\Omega)$. As in the proof of Lemma \ref{lem:Upos}, we can appeal to Mazur's Lemma to show that $\sigma^t-U$ is also a strong limit of a sequence in $X^{q,p}_0(\Omega)$. We finally conclude that $\sigma^t-U \in X^{q,p}_0(\Omega)$, as claimed.
\par
The strong convergence in $L^q$ can be obtained by observing that\footnote{We denote by $\|z\|_{\ell^r}$ the norm
\[
\|z\|_{\ell^r}=\left(|z_1|^r+|z_2|^r\right)^\frac{1}{r},\qquad \mbox{ for every } z=(z_1,z_2)\in\mathbb{R}^2.
\]}
\[ 
\sigma^t = \left\| \left( (1-t)^{1/r}\, v, t^{1/r}\, w \right) \right\|_{\ell^r}\quad \mbox{ and }\quad \sigma_{n}^t = \left\| \left( (1-t)^{1/r}\, v_n, t^{1/r}\, w_n \right) \right\|_{\ell^r}.
\]
Then, by using the triangle inequality for the $\ell^r$ norm
\[
\Big| \|z\|_{\ell^r} - \|\xi\|_{\ell^r} \Big| \le \| z -\xi \|_{\ell^r} \quad \text{ for } z,\xi \in \mathbb{R}^2,
\]
it follows
\[
|\sigma_{n}^t - \sigma^t| \le \left((1-t)\, |v_n-v|^r + t\, |w_n-w|^r\right)^\frac{1}{r}.
\]
By raising to the power $q$, integrating over $\Omega$ and using the convexity of the map $\tau\mapsto\tau^{q/r}$, we get
\[
\int_\Omega |\sigma_{n}^t - \sigma^t|^q\,dx\le (1-t)\,\int_\Omega |v_n-v|^q\,dx+t\,\int_\Omega |w_n-w|^q\,dx.
\] 
By recalling that $\{v_n\}_{n\in\mathbb{N}}$ and $\{w_n\}_{n\in\mathbb{N}}$ strongly converge in $L^{q}(\Omega)$ to $v$ and $w$, respectively,  we get \eqref{funzioni}.  
\par
As for the estimate \eqref{gradienti}, by applying the hidden convexity inequality \eqref{eq:hc}, we have
\[ 
\int_\Omega |\nabla \sigma_{n}^t|^p\,dx\le (1-t)\, \int_\Omega |\nabla v_n|^p\,dx + t\, \int_\Omega |\nabla w_n|^p\,dx, 
\]  
and the right-hand side is bounded, uniformly with respect to $n$. The proof is over.
\end{proof}

\section{Some properties of the energy functional}
\label{sec:3}

We recall the notion of {\it superminimum} and {\it subminimum} for the energy functional 
\[
\mathfrak{F}_{q,\alpha}(\psi):= \frac{1}{p} \int_\Omega |\nabla \psi|^{p} \, dx - \frac{\alpha}{q} \int_\Omega |\psi|^{q} \, dx, \qquad \mbox{ for every } \psi\in X^{q,p}(\Omega),
\]
which is naturally attached to our Lane-Emden equation \eqref{LE}.
\begin{defn}
Let $\alpha>0$, $1 < p < \infty$ and $1 \le q < p$. For every $v \in X^{q,p}(\Omega)$, we introduce the classes
\[
\mathcal{A}^+(v)=\Big\{\psi\in X^{q,p}(\Omega)\, :\, \psi\ge v \mbox{ on } \Omega,\, \psi-v\in X^{q,p}_0(\Omega)\Big\},
\] 
and
\[
\mathcal{A}^-(v)=\Big\{\psi\in X^{q,p}(\Omega)\, :\, \psi\le v \mbox{ on } \Omega,\, \psi-v\in X^{q,p}_0(\Omega)\Big\}.
\]
Then we say that $v$ is a:
\begin{itemize}
\item {\it superminimum for} $\mathfrak{F}_{q,\alpha}$ if
\[
\mathfrak{F}_{q,\alpha}(\psi) \ge \mathfrak{F}_{q,\alpha}(v),\qquad \mbox{ for every } \psi \in\mathcal{A}^+(v);
\]
\item {\it subminimum for} $\mathfrak{F}_{q,\alpha}$ if 
\[
\mathfrak{F}_{q,\alpha}(\psi) \ge \mathfrak{F}_{q,\alpha}(v),\qquad \mbox{ for every } \psi \in\mathcal{A}^-(v);
\]
\item {\it minimum for} $\mathfrak{F}_{q,\alpha}$ if 
\[
\mathfrak{F}_{q,\alpha}(\psi) \ge \mathfrak{F}_{q,\alpha}(v),\qquad \mbox{ for every } \psi\in X^{q,p}(\Omega) \mbox{ such that } \psi-v\in X^{q,p}_0(\Omega).
\]
\end{itemize}
\end{defn} 
\begin{rem}
It is a routine fact to show that a function $v$ is a minimum for $\mathfrak{F}_{q,\alpha}$ if and only if it is both a superminimum and a subminimum.
\end{rem}

We start with the following existence result, which holds under minimal assumptions.
\begin{thm}[Existence]
\label{teo:existence}
Let $\alpha>0$ and $1\le q<p<\infty$. Let $\Omega\subset \mathbb{R}^N$ be an open set which is $q-$admissible. Then for every $U\in X^{q,p}(\Omega)$ the following problem
\[
\inf_{\psi\in X^{q,p}(\Omega)}\Big\{\mathfrak{F}_{q,\alpha}(\psi)\, :\, \psi-U\in W^{1,p}_0(\Omega)\Big\},
\]
admits a solution. Moreover, we have:
\begin{itemize}
\item for $1<q<p$, each minimizer is a solution of the Lane-Emden equation \eqref{LE};
\vskip.2cm
\item for $q=1$, each minimizer $u$ satisfies
\[
-\alpha\le -\Delta_p u\le \alpha,\qquad \mbox{ in }\Omega,
\]
in weak sense;
\vskip.2cm
\item for $q=1$, each non-negative minimizer $u$ (provided it exists) satisfies
\[
-\Delta_p u= \alpha,\qquad \mbox{ in }\Omega,
\]
in weak sense.
\end{itemize}
\end{thm}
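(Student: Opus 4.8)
The plan is to run the direct method of the calculus of variations on the affine class $U+W^{1,p}_0(\Omega)$, and then to extract the Euler--Lagrange (in)equalities by taking first variations, being careful at $q=1$ where the functional is not differentiable.

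\textbf{Coercivity and existence.} I would write a competitor as $\psi=U+\phi$ with $\phi\in W^{1,p}_0(\Omega)$ and estimate the lower-order term from above. Since $\Omega$ is $q$-admissible, Proposition \ref{prop:spazi0} gives $W^{1,p}_0(\Omega)=X^{q,p}_0(\Omega)=\mathcal{D}^{1,p}_0(\Omega)$, hence $\|\phi\|_{L^q(\Omega)}\le \lambda_{p,q}(\Omega)^{-1/p}\,\|\nabla\phi\|_{L^p(\Omega;\mathbb{R}^N)}$; combining this with $\|\psi\|_{L^q(\Omega)}\le\|\phi\|_{L^q(\Omega)}+\|U\|_{L^q(\Omega)}$, with $\|\nabla\psi\|_{L^p}\ge\|\nabla\phi\|_{L^p}-\|\nabla U\|_{L^p}$, and with Young's inequality applied to powers with conjugate exponents $p/q$ and $p/(p-q)$ (this is where $q<p$ enters), one obtains
\[
\mathfrak{F}_{q,\alpha}(\psi)\ge \frac{1}{2p}\,\|\nabla\phi\|_{L^p(\Omega;\mathbb{R}^N)}^p-C,
\]
for a constant $C=C(N,p,q,\alpha,U,\Omega)$. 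Thus the infimum is finite, and any minimizing sequence $\{U+\phi_n\}$ has $\{\nabla\phi_n\}$ bounded in $L^p(\Omega;\mathbb{R}^N)$. By the compact embedding $W^{1,p}_0(\Omega)\hookrightarrow\hookrightarrow L^q(\Omega)$ of Proposition \ref{prop:spazi0}, up to a subsequence $\phi_n\to\phi_\infty$ strongly in $L^q(\Omega)$ and $\nabla\phi_n\rightharpoonup\nabla\phi_\infty$ weakly in $L^p$, with $\phi_\infty\in W^{1,p}_0(\Omega)$. Weak lower semicontinuity of $\psi\mapsto\int_\Omega|\nabla\psi|^p\,dx$ together with strong $L^q$ convergence of the zeroth-order term gives $\mathfrak{F}_{q,\alpha}(U+\phi_\infty)\le\liminf_n\mathfrak{F}_{q,\alpha}(U+\phi_n)$, so $u:=U+\phi_\infty\in X^{q,p}(\Omega)$ is an admissible minimizer.

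\textbf{Euler--Lagrange for $1<q<p$.} Here $\mathfrak{F}_{q,\alpha}$ is Gateaux differentiable along directions $\varphi\in C^\infty_0(\Omega)$: the only nonobvious point is differentiating $\int_\Omega|u+s\varphi|^q\,dx$ under the integral, which follows from dominated convergence after bounding the difference quotient by $q\,(|u|+|\varphi|)^{q-1}|\varphi|$, an $L^1(\Omega)$ function since $|u|^{q-1}\in L^{q/(q-1)}(\Omega)$ and $\varphi$ is bounded with support of finite measure. Imposing $\tfrac{d}{ds}\mathfrak{F}_{q,\alpha}(u+s\varphi)\big|_{s=0}=0$ gives exactly the weak formulation of \eqref{LE}.

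\textbf{The case $q=1$.} Now $-\alpha\int_\Omega|\psi|\,dx$ is only Lipschitz (and concave), so I would use one-sided variations together with the elementary bound $\big||u\pm s\varphi|-|u|\big|\le s\,\varphi$ for $\varphi\in C^\infty_0(\Omega)$, $\varphi\ge 0$, $s>0$. Minimality gives
\[
0\le \mathfrak{F}_{1,\alpha}(u\pm s\varphi)-\mathfrak{F}_{1,\alpha}(u)\le \frac{1}{p}\int_\Omega\big(|\nabla u\pm s\nabla\varphi|^p-|\nabla u|^p\big)\,dx+\alpha\,s\int_\Omega\varphi\,dx,
\]
and dividing by $s$ and letting $s\to 0^+$ yields $\pm\int_\Omega|\nabla u|^{p-2}\,\nabla u\cdot\nabla\varphi\,dx+\alpha\int_\Omega\varphi\,dx\ge 0$, i.e. $-\alpha\le-\Delta_p u\le\alpha$ weakly. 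If in addition $u\ge 0$, then for $\varphi\ge 0$ and $s>0$ one has $|u+s\varphi|=u+s\varphi$ \emph{identically}, so $\mathfrak{F}_{1,\alpha}(u+s\varphi)-\mathfrak{F}_{1,\alpha}(u)=\frac1p\int_\Omega(|\nabla u+s\nabla\varphi|^p-|\nabla u|^p)\,dx-\alpha s\int_\Omega\varphi\,dx\ge 0$; passing to the limit gives $\int_\Omega|\nabla u|^{p-2}\nabla u\cdot\nabla\varphi\,dx\ge\alpha\int_\Omega\varphi\,dx$, which together with the upper bound just obtained forces equality for every $\varphi\ge 0$, hence (by writing a general test function as $\varphi_+-\varphi_-$) for all $\varphi\in C^\infty_0(\Omega)$, that is $-\Delta_p u=\alpha$ weakly. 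I expect the genuinely delicate point to be precisely this $q=1$ analysis — avoiding differentiation of the nonsmooth term, squeezing the difference quotients between the two one-sided bounds, and exploiting nonnegativity of the competitor so the absolute value disappears on the nose; the rest is standard.
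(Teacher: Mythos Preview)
Your proof is correct and follows essentially the same approach as the paper: direct method with coercivity via $q$-admissibility and Young's inequality, compactness from Proposition~\ref{prop:spazi0}, and one-sided variations with the bound $\big||u\pm s\varphi|-|u|\big|\le s\varphi$ for the $q=1$ case. The only cosmetic difference is in the $q=1$ nonnegative case, where the paper instead observes that $u$ also minimizes the \emph{linear} functional $\widetilde{\mathfrak{F}}_{1,\alpha}(\psi)=\tfrac{1}{p}\int_\Omega|\nabla\psi|^p\,dx-\alpha\int_\Omega\psi\,dx$ (since $\widetilde{\mathfrak{F}}_{1,\alpha}(\psi)\ge\mathfrak{F}_{1,\alpha}(\psi)\ge\mathfrak{F}_{1,\alpha}(u)=\widetilde{\mathfrak{F}}_{1,\alpha}(u)$) and takes its Euler--Lagrange equation directly; your route of noting $|u+s\varphi|=u+s\varphi$ and squeezing against the already-established upper bound is equally valid and arguably more elementary.
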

\begin{proof}
We first observe that the class of admissible functions is not empty, since the function $U$ itself is admissible. We will use the Direct Method in the Calculus of Variations. At this aim, let us first prove that the infimum above is finite. For every admissible $\psi$, we have 
\begin{equation}
\label{stimainiziale}
\begin{split}
\int_\Omega |\psi|^q\,dx& \le2^{q-1}\,\int_\Omega |\psi-U|^q\,dx+2^{q-1}\,\int_\Omega |U|^q\,dx\\
&\le \frac{2^{q-1}}{\left(\lambda_{p,q}(\Omega)\right)^\frac{q}{p}}\,\|\nabla \psi-\nabla U\|_{L^p(\Omega;\mathbb{R}^N)}^q+2^{q-1}\,\int_\Omega |U|^q\,dx,
\end{split}
\end{equation}
where we used the assumptions that $\psi-U\in W^{1,p}_0(\Omega)$ and that $\Omega$ is $q-$admissible. We now observe that 
\[
\begin{split}
\|\nabla \psi-\nabla U\|_{L^p(\Omega;\mathbb{R}^N)}^q&\le 2^{q-1}\,\|\nabla \psi\|_{L^p(\Omega;\mathbb{R}^N)}^q+2^{q-1}\,\|\nabla U\|_{L^p(\Omega;\mathbb{R}^N)}^q.
\end{split}
\]
By inserting this in \eqref{stimainiziale}, we finally get
\begin{equation}
\label{normalimita}
\int_\Omega |\psi|^q\,dx\le C_1\,\|\nabla \psi\|_{L^p(\Omega;\mathbb{R}^N)}^q+C_2\,\|U\|_{X^{q,p}(\Omega)}^q,
\end{equation}
for some $C_1=C_1(N,p,q,\Omega)>0$ and $C_2=C_2(N,p,q,\Omega)>0$. This yields
\[
\begin{split}
\mathfrak{F}_{q,\alpha}(\psi)&=\frac{1}{p}\,\int_\Omega |\nabla \psi|^p\,dx-\frac{\alpha}{q}\,\int_\Omega |\psi|^q\,dx\\
&\ge \frac{1}{p}\,\int_\Omega |\nabla \psi|^p\,dx-C_1\,\frac{\alpha}{q}\,\left(\int_\Omega |\nabla \psi|^p\,dx\right)^\frac{q}{p}-C_2\,\frac{\alpha}{q}\,\|U\|_{X^{q,p}(\Omega)}^q.
\end{split}
\]
By a suitable application of the generalized Young's inequality
\[
a\,b\le \delta\,\frac{q}{p}\,a^\frac{p}{q}+\delta^{-\frac{q}{p-q}}\,\frac{p-q}{p}\,b^\frac{p}{p-q},\qquad \mbox{ for }a,b\ge 0 \mbox{ and } \delta>0,
\]
we then obtain
\[
\mathfrak{F}_{q,\alpha}(\psi)\ge \frac{1}{p}\,\left(1-\delta\right)\,\int_\Omega |\nabla \psi|^p\,dx-\delta^{-\frac{q}{p-q}}\,\frac{p-q}{p\,q}\,(C_1\,\alpha)^\frac{p}{p-q}-C_2\,\frac{\alpha}{q}\,\|U\|_{X^{q,p}(\Omega)}^q.
\]
If we now choose $\delta=1/2$, we finally end up with 
\begin{equation}
\label{coerciva}
\mathfrak{F}_{q,\alpha}(\psi)\ge \frac{1}{2\,p}\,\int_\Omega |\nabla \psi|^p\,dx-M,
\end{equation}
where $M=M(N,p,q,\alpha,\Omega,U)>0$. This in particular shows that the infimum of $\mathfrak{F}_{q,\alpha}$ over the claimed set is finite.
\par
Let us call $m$ this infimum and consider a sequence $\{\psi_n\}_{n\in\mathbb{N}}$ of admissible functions, such that
\[
\mathfrak{F}_{q,\alpha}(\psi_n)\le m+\frac{1}{n+1},\qquad \mbox{ for every } n\in\mathbb{N}.
\]
By using the estimates \eqref{normalimita} and \eqref{coerciva}, we can then infer that 
\[
\sup_{n\in\mathbb{N}} \|\psi_n\|_{X^{q,p}(\Omega)}<+\infty.
\]
Thus, we get that $\psi_n$ weakly converges in $L^q(\Omega)$ to $\psi$ and $\nabla \psi_n$ weakly converges in $L^p(\Omega;\mathbb{R}^N)$ to a vector filed $\phi$, up to a subsequence. It is easily seen, by the definition of weak gradient, that it must result $\phi=\nabla \psi$. This shows that $\psi\in X^{q,p}(\Omega)$. 
\par
We still need to show that $\psi-U\in W^{1,p}_0(\Omega)$. In order to prove this, we observe that the sequence $\{\psi_n-U\}_{n\in\mathbb{N}}\subset W^{1,p}_0(\Omega)$ is bounded in the norm $X^{q,p}(\Omega)$. However, thanks to Proposition \ref{prop:spazi0}, we know that $W^{1,p}_0(\Omega)=X^{q,p}_0(\Omega)$. This in turn permits to show that $\{\psi_n-U\}_{n\in\mathbb{N}}$ is bounded in $W^{1,p}_0(\Omega)$. Thus $\psi_n-U$ weakly converges in $W^{1,p}_0(\Omega)$, up to a subsequence. By uniqueness, such a limit must coincide with $\psi-U$ and belongs to $W^{1,p}_0(\Omega)$, since the latter is weakly closed.
Moreover, by using again that $\Omega$ is $q-$admissible, we can infer
\[
\lim_{n\to\infty} \|\psi_n-U\|_{L^q(\Omega)}=\|\psi-U\|_{L^q(\Omega)},
\]
up to a subsequence, thanks to Proposition \ref{prop:spazi0}. Thus $\{\psi_n\}_{n\in\mathbb{N}}$ is actually strongly converging in $L^q(\Omega)$. This fact and the weak lower semicontinuity of the $p-$Dirichlet integral show that
\[
\mathfrak{F}_{q,\alpha}(\psi)\le \liminf_{n\to \infty}\mathfrak{F}_{q,\alpha}(\psi_n).
\]
By construction of the sequence $\{\psi_n\}_{n\in\mathbb{N}}$, the function $\psi$ is the desired minimizer. 
\vskip.2cm\noindent
The last part of the statement is now standard for $1<q<p$, it is sufficient to observe that the functional is Gateaux differentiable and compute its first variation. 
\par
In the limit case $q=1$ we must be more careful. Let us take $u$ a minimizer. For every $t>0$ and $\psi\in C^\infty_0(\Omega)$ non-negative, we have 
\[
\mathfrak{F}_{1,\alpha}(u+t\,\psi)\ge \mathfrak{F}_{1,\alpha}(u).
\]
By using the definition of $\mathfrak{F}_{1,\alpha}$ and the triangle inequality for the absolute valute, this implies
\[
\frac{1}{p}\,\int_\Omega \frac{|\nabla u+t\,\nabla \psi|^p-|\nabla u|^p}{t}\,dx+\alpha\,\int_\Omega \psi\,dx\ge 0,
\]
where we further divided by $t>0$. By taking the limit as $t$ goes to $0$, we get
\[
\int_\Omega \langle |\nabla u|^{p-2}\,\nabla u,\nabla \psi\rangle\,dx\ge -\alpha\,\int_\Omega \psi\,dx,\qquad \mbox{ for every } \psi\in C^\infty_0(\Omega),\ \psi\ge 0.
\]
This exactly means that $-\Delta_p u\ge -\alpha$, in weak sense. The other differential inequality is obtained in the same way, by taking this time $t<0$. This is left to the reader.
\par
Finally, still in the case $q=1$, let us suppose that $u$ is a non-negative minimizer. Then, if we define the {\it convex} functional 
\[
\widetilde{\mathfrak{F}}_{1,\alpha}(\psi):=\frac{1}{p}\,\int_\Omega |\nabla \psi|^p\,dx-\alpha\,\int_\Omega \psi\,dx,
\]
we get that $u$ minimizes this functional, as well. It is sufficient to observe that
\[
\widetilde{\mathfrak{F}}_{1,\alpha}(\psi)\ge \mathfrak{F}_{1,\alpha}(\psi)\ge \mathfrak{F}_{1,\alpha}(u)=\widetilde{\mathfrak{F}}_{1,\alpha}(u).
\]
for every admissible $\psi$. This new functional is Gateaux differentiable (because the lower order term is now linear) and its critical points exactly satisfy the weak formulation of the equation $-\Delta_p u=\alpha$. This concludes the proof.
\end{proof}
\begin{rem}
\label{rem:zeroalbordo}
Recall that, thanks to Proposition \ref{prop:spazi0}, in the previous result one could equivalently write the condition $\psi-U\in W^{1,p}_0(\Omega)$ as 
\[
\psi-U\in X^{q,p}_0(\Omega).
\]
Moreover, in the particular case $U\in W^{1,p}_0(\Omega)$, under the previous assumptions we get 
\[
\inf_{\psi\in X^{q,p}(\Omega)}\Big\{\mathfrak{F}_{q,\alpha}(\psi)\, :\, \psi-U\in W^{1,p}_0(\Omega)\Big\}=
\inf_{\psi\in W^{1,p}_0(\Omega)}\mathfrak{F}_{q,\alpha}(\psi),
\]
still thanks to Proposition \ref{prop:spazi0}.
\end{rem}

The hidden convex structure of the functional $\mathfrak{F}_{q,\alpha}$ permits to establish 
an equivalence between supersolutions/subsolutions of \eqref{LE} and superminima/subminima of the functional $\mathfrak{F}_{q,\alpha}$. More precisely, we have the following
\begin{prop}
\label{prop:solmin}
Let $\alpha>0$ and $1\le q<p<\infty$. Let $\Omega \subset \mathbb{R}^N$ be an open set and let $v \in X^{q,p}(\Omega)$ be a positive function. The following facts hold: 
\begin{enumerate}
\item $v$ is a weak supersolution of \eqref{LE} if and only if it is a superminimum for $\mathfrak{F}_{q,\alpha}$;
\vskip.2cm
\item $v$ is a weak subsolution of \eqref{LE} if and only if it is a subminimum for $\mathfrak{F}_{q,\alpha}$.    
\end{enumerate}
\end{prop}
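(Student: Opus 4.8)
The plan is to prove each equivalence by exploiting the hidden convexity of $\mathfrak{F}_{q,\alpha}$ along the curves $\sigma^t = \big((1-t)\,v^q + t\,w^q\big)^{1/q}$ (taking $r=q$ in Theorem \ref{thm:shc}, which is admissible since $1 \le q < p$ forces $1 \le q = r \le \min\{p,q\}$). I treat statement $(1)$ in detail; statement $(2)$ is entirely symmetric, with the roles of $\mathcal{A}^+(v)$ and $\mathcal{A}^-(v)$ and the direction of the defining inequalities interchanged.

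\emph{From supersolution to superminimum.} Assume $v$ is a weak supersolution and let $\psi \in \mathcal{A}^+(v)$, so $\psi \ge v > 0$ and $\psi - v \in X^{q,p}_0(\Omega)$. For $t \in (0,1)$ set $\sigma^t = \big((1-t)\,v^q + t\,\psi^q\big)^{1/q}$. By Proposition \ref{prop:datumU} (with $U = v$) we have $\sigma^t - v \in X^{q,p}_0(\Omega)$; note also $\sigma^t \ge v$ pointwise, so $\sigma^t - v$ is a legitimate non-negative test function in the weak formulation of the supersolution inequality. The key computation is to differentiate $t \mapsto \mathfrak{F}_{q,\alpha}(\sigma^t)$ at $t = 0^+$. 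The Dirichlet part is convex in $t$ by \eqref{eq:hc}, hence its right derivative at $0$ is bounded above by $\frac{1}{p}\big(\int_\Omega |\nabla \psi|^p - \int_\Omega |\nabla v|^p\big)$; the $L^q$ part is \emph{linear} in $t$, since $\int_\Omega |\sigma^t|^q = (1-t)\int_\Omega v^q + t \int_\Omega \psi^q$. Therefore
\[
\frac{d}{dt}\Big|_{t=0^+} \mathfrak{F}_{q,\alpha}(\sigma^t) \le \frac{1}{p}\int_\Omega \big(|\nabla \psi|^p - |\nabla v|^p\big)\,dx - \frac{\alpha}{q}\int_\Omega \big(\psi^q - v^q\big)\,dx.
\]
On the other hand, one computes the left-hand side directly from the explicit form of $\nabla \sigma^t$ at $t=0$ (which equals $\nabla v$ a.e. on $\{v>0\} = \Omega$); carrying out the differentiation under the integral sign — justified by the uniform bounds from the hidden-convexity estimate — yields
\[
\frac{d}{dt}\Big|_{t=0^+}\mathfrak{F}_{q,\alpha}(\sigma^t) = \int_\Omega \langle |\nabla v|^{p-2}\nabla v, \nabla\sigma^t|_{t=0}' \rangle\,dx - \frac{\alpha}{q}\int_\Omega(\psi^q - v^q)\,dx,
\]
and since $\partial_t \sigma^t|_{t=0} = \tfrac{1}{q}v^{1-q}(\psi^q - v^q) \ge 0$ lies in $X^{q,p}_0(\Omega)$, the supersolution inequality tested against this function gives that the first integral is $\ge 0$. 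Combining the two displays and rearranging gives $\mathfrak{F}_{q,\alpha}(v) \le \mathfrak{F}_{q,\alpha}(\psi)$, i.e. $v$ is a superminimum. (In the case $q=1$ the convention $v^{q-1}=1$ makes this even simpler, as $\partial_t\sigma^t = \psi - v$ exactly.)

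\emph{From superminimum to supersolution.} Conversely, assume $v$ is a superminimum. Fix $\varphi \in C^\infty_0(\Omega)$ with $\varphi \ge 0$ and, for small $s > 0$, apply the minimality inequality with the competitor obtained by reversing the substitution above: put $w_s := \big(v^q + s\,\varphi\big)^{1/q}$ when $q > 1$ (so $w_s \ge v$, $w_s - v \in X^{q,p}_0(\Omega)$, hence $w_s \in \mathcal{A}^+(v)$), respectively $w_s := v + s\varphi$ when $q=1$. Then $\mathfrak{F}_{q,\alpha}(w_s) \ge \mathfrak{F}_{q,\alpha}(v)$ for all $s>0$; dividing by $s$ and letting $s \to 0^+$ produces exactly $\int_\Omega \langle |\nabla v|^{p-2}\nabla v, \nabla \varphi\rangle\,dx \ge \alpha \int_\Omega v^{q-1}\varphi\,dx$, which is the weak supersolution inequality (with $\psi = \varphi$, up to relabeling). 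The differentiation of $s \mapsto \int_\Omega |\nabla w_s|^p\,dx$ at $s=0$ is where a little care is needed — one should either verify directly that this map has the expected right derivative $\int_\Omega \langle |\nabla v|^{p-2}\nabla v, \nabla(\tfrac1q v^{1-q}\varphi)\rangle\,dx$ (using $|\nabla w_s|^p \le |\nabla v|^p + o(1)$ pointwise and dominated convergence on the support of $\varphi$), or invoke convexity in $s$ of the Dirichlet term to get the inequality in the needed direction. Statement $(2)$ follows by the same argument with $w \le v$ competitors, $\sigma^t$ built from $v$ and a subsolution, and all inequalities reversed.

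\emph{Main obstacle.} The delicate point throughout is the differentiation of the $p$-Dirichlet integral along the curve $t \mapsto \sigma^t$ at the endpoint $t=0$: a priori the integrand $|\nabla \sigma^t|^p$ need not be differentiable in $t$ in an integrable-dominated way on all of $\Omega$ without control near $\{v \to 0\}$ or near $\infty$. The positivity hypothesis $v>0$ removes the zero-set issue, and the estimate $|\nabla \sigma^t|^r \le (1-t)|\nabla v|^r + t|\nabla w|^r$ from \eqref{semplice}–\eqref{quasiconvex} supplies a uniform-in-$t$ dominating function; combined with convexity of $t \mapsto \int_\Omega |\nabla\sigma^t|^p$ (so that one-sided derivatives exist and the difference quotients are monotone), this lets difference-quotient/monotone-convergence arguments replace any appeal to differentiation under the integral sign. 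This is the step I would write out most carefully; everything else is bookkeeping with the classes $\mathcal{A}^\pm(v)$ and the density of $C^\infty_0(\Omega)$ in $X^{q,p}_0(\Omega)$.
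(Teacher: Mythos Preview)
Your overall strategy matches the paper's: use the curve $\sigma^t=\big((1-t)v^q+t\psi^q\big)^{1/q}$ and the hidden convexity to compare $\mathfrak{F}_{q,\alpha}(\psi)$ with $\mathfrak{F}_{q,\alpha}(v)$ via the right derivative at $t=0$. Two points deserve correction.

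\textbf{The easy direction is miscomputed.} For ``superminimum $\Rightarrow$ supersolution'' you take the competitor $w_s=(v^q+s\varphi)^{1/q}$. Differentiating $\mathfrak{F}_{q,\alpha}(w_s)$ at $s=0$ does \emph{not} produce $\int\langle|\nabla v|^{p-2}\nabla v,\nabla\varphi\rangle \ge \alpha\int v^{q-1}\varphi$; since $\partial_s w_s|_{s=0}=\tfrac{1}{q}v^{1-q}\varphi$, you instead obtain the inequality tested against $\tfrac{1}{q}v^{1-q}\varphi$, and not every non-negative $\psi\in C^\infty_0(\Omega)$ arises this way (you would need $qv^{q-1}\psi\in C^\infty_0$, which requires regularity of $v$ you do not have). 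The paper simply calls this direction ``standard'': test the superminimality with the \emph{linear} perturbation $v+t\varphi\in\mathcal{A}^+(v)$, divide by $t>0$, and let $t\to0^+$; Gateaux differentiability of $\mathfrak{F}_{q,\alpha}$ for $q>1$ (and a one-line argument for $q=1$ using $v>0$) gives the supersolution inequality directly.

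\textbf{The hard direction needs the paper's workaround, not direct differentiation.} For ``supersolution $\Rightarrow$ superminimum'' you assert that $\partial_t\sigma^t|_{t=0}=\tfrac{1}{q}v^{1-q}(\psi^q-v^q)$ lies in $X^{q,p}_0(\Omega)$ and use it as a test function. This is the real obstacle, not merely differentiation under the integral: $v^{1-q}$ may blow up even though $v>0$ a.e., so membership in $X^{q,p}_0(\Omega)$ is not clear. The paper avoids this by never passing to $t=0$ inside the supersolution inequality. Instead it uses the pointwise ``above tangent'' estimate $\tfrac{1}{p}(|\nabla\sigma^t|^p-|\nabla v|^p)\ge\langle|\nabla v|^{p-2}\nabla v,\nabla\sigma^t-\nabla v\rangle$ to write
\[
\frac{\mathfrak{F}_{q,\alpha}(\sigma^t)-\mathfrak{F}_{q,\alpha}(v)}{t}\ \ge\ \int_\Omega\Big\langle|\nabla v|^{p-2}\nabla v,\nabla\frac{\sigma^t-v}{t}\Big\rangle\,dx-\frac{\alpha}{q}\int_\Omega(\psi^q-v^q)\,dx,
\]
tests the supersolution with the legitimate function $(\sigma^t-v)/t\in X^{q,p}_0(\Omega)$ (Proposition~\ref{prop:datumU}) to replace the gradient term by $\alpha\int v^{q-1}(\sigma^t-v)/t$, and only \emph{then} lets $t\to0^+$ using Fatou on this non-negative integrand. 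The limit is $\tfrac{\alpha}{q}\int(\psi^q-v^q)$, which cancels. Your ``main obstacle'' paragraph gestures toward difference quotients, but the key is that the test-function regularity issue is bypassed, not that dominated convergence is replaced by monotone convergence.
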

\begin{proof}
We focus on proving the first statement. The proof of the second one runs similarly and it is thus left to the reader. Moreover, the fact that a superminimum is a weak supersolution, it follows in a standard way, by computing the first variation of the functional. We can thus reduce to prove the converse implication.
\vskip.2cm\noindent
We suppose that $v$ is a positive supersolution of \eqref{LE}. Let $\psi\in \mathcal{A}^+(v)$, our aim is to show that 
\begin{equation}
\label{sopra}
\mathfrak{F}_{q,\alpha}(\psi)\ge \mathfrak{F}_{q,\alpha}(v).
\end{equation}
We first observe that if we set 
\[
\widetilde v=\alpha^{-\frac{1}{p-q}}\,v,
\]
then by Remark \ref{rem:scalings} this is a supersolution of the Lane-Emden equation with $\alpha=1$. Moreover, we have 
\[
\mathfrak{F}_{q,1}(\widetilde v)=\alpha^{-\frac{p}{p-q}}\,\mathfrak{F}_{q,\alpha}(v).
\]
Analogously, if $\psi\in \mathcal{A}^+(v)$ and we set
\[
\widetilde \psi=\alpha^{-\frac{1}{p-q}}\,\psi,
\]
then  $\widetilde \psi\in \mathcal{A}^+(\widetilde v)$ and 
\[
\mathfrak{F}_{q,1}(\widetilde \psi)=\alpha^{-\frac{p}{p-q}}\,\mathfrak{F}_{q,\alpha}(\psi).
\]
Thus, in order to prove \eqref{sopra}, we can reduce to prove that 
\begin{equation}
\label{sopra1}
\mathfrak{F}_{q,1}(\widetilde\psi)\ge \mathfrak{F}_{q,1}(\widetilde v).
\end{equation}
We now build the curve $\sigma^t=((1-t)\,\widetilde{v}^q+t\, {\widetilde \psi}^q)^{1/q}$ and observe that
\[
\frac{\sigma^t - \widetilde v}{t} \in X^{q,p}_0(\Omega),
\] 
thanks to Proposition \ref{prop:datumU}. Furthermore, by monotonicity, we observe that 
\[ 
(\sigma^t)^q = (1-t)\, \widetilde v ^q + t \, {\widetilde \psi}^q = \widetilde v^q + t\, (\widetilde \psi^q-\widetilde v^q) \ge \widetilde v^q,
\]
since $\widetilde\psi \ge \widetilde v$. This shows that 
\[
\sigma^t  \in \mathcal{A}^+(\widetilde v).
\]
From \eqref{eq:hc}, we get 
\[
\int_{\Omega} | \nabla \sigma^t |^p \, dx \le (1-t)\, \int_{\Omega} | \nabla \widetilde v |^p \, dx + t \,\int_{\Omega} | \nabla\widetilde\psi |^p \, dx, \qquad \text{ for every } t \in [0,1], 
\]
and also, by construction, we have 
\[ 
\int_{\Omega}  (\sigma^t)^q\,dx = (1-t)\, \int_{\Omega} \widetilde v^q + t\, \int_{\Omega} \widetilde \psi^q\,dx,\qquad \text{ for every } t \in [0,1].
\]
Thus in particular we obtain
\begin{equation}\label{eq:prop1}
\frac{\mathfrak{F}_{q,1}(\sigma^t)-\mathfrak{F}_{q,1}(\widetilde v)}{t} \le \mathfrak{F}_{q,1}(\widetilde\psi) - \mathfrak{F}_{q,1}(\widetilde v), \qquad \text{ for every } t \in (0,1].
\end{equation}
We focus on the term on the left-hand side. We use the ``above tangent'' inequality
\[
\frac{1}{p}\, |z|^{p} \ge \frac{1}{p}\, |z_0|^{p}+ \langle |z_0|^{p-2} z_0, z - z_0 \rangle, \quad \text{ for every } z_0, z \in \mathbb{R}^N, 
\]
with the choices $z= \nabla \sigma^t$ and $z_0=\nabla \widetilde v$.
Thus we get
\[
\begin{split}
\frac{\mathfrak{F}_{q,1}(\sigma^t)-\mathfrak{F}_{q,1}(\widetilde v)}{t} &= \frac{1}{p}\, \int_{\Omega} \frac{|\nabla\sigma^t|^{p}-|\nabla \widetilde v|^{p}}{t} \, dx - \frac{1}{q} \,\int_{\Omega} \frac{(\sigma^t)^q-\widetilde v^q}{t} \, dx \\
&\ge \int_{\Omega} \left\langle |\nabla \widetilde v|^{p-2}\, \nabla \widetilde v, \frac{\nabla \sigma^t - \nabla \widetilde v}{t} \right\rangle \, dx - \frac{1}{q}\, \int_{\Omega} \frac{\widetilde v^q+t\,(\widetilde \psi^q-\widetilde v^q)-\widetilde v^q}{t} \, dx \\
&\ge \int_{\Omega} \widetilde v^{q-1}\, \frac{\sigma^t - \widetilde v}{t} \, dx - \frac{1}{q} \int_\Omega ( \widetilde \psi^q - \widetilde v^q ) \, dx, 
\end{split}
\]
where in the last inequality we used that $\widetilde v$ is a supersolution and the fact that $\sigma^t-\widetilde v$ is a feasible test function. Moreover, we observe that almost everywhere in $\Omega$ it holds
\[
\lim_{t \to 0^+} \frac{\sigma^t - \widetilde v}{t}=\frac{d}{dt} \sigma^t|_{t=0} = \frac{1}{q}\, \widetilde v^{1-q}\, (\widetilde \psi^q-\widetilde v^q).
\]
Then we can pass to the limit as $t$ goes to $0$: from \eqref{eq:prop1} and the estimate above, we obtain 
\[
\mathfrak{F}_{q,1}(\widetilde \psi)-\mathfrak{F}_{q,1}(\widetilde v) \ge \frac{1}{q}\, \int_{\Omega} \widetilde v^{q-1}\, \widetilde v^{1-q}\, (\widetilde \psi^q-\widetilde v^q) \, dx - \frac{1}{q}\, \int_{\Omega} (\widetilde\psi^q-\widetilde v^q) \, dx = 0,
\]
thanks to Fatou's Lemma and to the fact that $v>0$ almost everywhere in $\Omega$, by assumption. This finally established \eqref{sopra1}, as desired.
\end{proof}

\begin{rem}
\label{rem:unicasolpos}
Of course, the previous result implies that a positive function $v \in X^{q,p}(\Omega)$ is a weak solution of \eqref{LE} if and only if it is a solution of
\[
\min_{\psi\in X^{q,p}(\Omega)}\Big\{\mathfrak{F}_{q,\alpha}(\psi)\,:\, \psi-v\in W^{1,p}_0(\Omega)\Big\},
\]
i.e. if it minimizes $\mathfrak{F}_{q,\alpha}$ with respect to its own boundary datum.
\end{rem}


\section{A comparison principle}
\label{sec:4}

The main tool of this paper is the following comparison principle for positive supersolutions and subsolutions of the Lane-Emden equation \eqref{LE}. This is proved under minimal assumptions, both on the set and the functions.

\begin{thm}[Comparison principle]
\label{thm:comparison}
Let $\alpha>0$ and $1\leq  q < p<\infty$. Let $\Omega \subset \mathbb{R}^N$ be an open connected set. Assume that $u, v \in X^{q,p}(\Omega)$ are two positive functions, such that $u$ is a subsolution and $v$ is a supersolution of \eqref{LE}.  If $(u-v)_+ \in X^{q,p}_0(\Omega)$, then 
\[
v \ge u,\qquad\mbox{ a.\,e. in }\Omega.
\]  
\end{thm}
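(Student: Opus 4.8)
The plan is to argue by contradiction using the variational characterization of subsolutions and supersolutions provided by Proposition~\ref{prop:solmin}, together with the hidden convexity inequality \eqref{eq:hc}. Suppose that the set $\{u>v\}$ has positive measure. As in the proof of Proposition~\ref{prop:solmin}, after a harmless rescaling (Remark~\ref{rem:scalings}) we may assume $\alpha=1$, so that $u$ is a subminimum and $v$ a superminimum of $\mathfrak{F}_{q,1}$. The natural competitors are obtained by mixing $u$ and $v$ along the curve from Theorem~\ref{thm:shc} with $r=q$: set
\[
\sigma^t:=\bigl((1-t)\,v^q+t\,\max\{u,v\}^q\bigr)^{1/q},\qquad t\in[0,1],
\]
or, symmetrically, the curve joining $u$ to $\min\{u,v\}$. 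The point is that $\sigma^t\ge v$ with $\sigma^t-v\in X^{q,p}_0(\Omega)$ — this last fact follows from Proposition~\ref{prop:datumU} together with Lemma~\ref{lem:Upos}\,$(ii)$, since $(u-v)_+\in X^{q,p}_0(\Omega)$ forces $\max\{u,v\}-v=(u-v)_+$ to lie in $X^{q,p}_0(\Omega)$, hence $\max\{u,v\}$ shares the boundary datum of $v$. Thus $\sigma^t\in\mathcal{A}^+(v)$, and symmetrically the analogous curve lands in $\mathcal{A}^-(u)$.

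The core computation is then to differentiate the energy along these curves at $t=0$, exactly as in Proposition~\ref{prop:solmin}. From the superminimality of $v$, applied to $\sigma^t\in\mathcal{A}^+(v)$, the ``above tangent'' inequality for $z\mapsto|z|^p/p$ and the supersolution inequality tested against $(\sigma^t-v)/t\in X^{q,p}_0(\Omega)$, one obtains in the limit $t\to0^+$
\[
0\le\frac{1}{q}\int_\Omega v^{q-1}\,v^{1-q}\,(\max\{u,v\}^q-v^q)\,dx-\frac{1}{q}\int_\Omega(\max\{u,v\}^q-v^q)\,dx
\]
supplemented by the \emph{enhanced} term coming from \eqref{rinforzino}: the hidden convexity inequality is \emph{strict} unless $v$ and $\max\{u,v\}$ are proportional (for $r=q<p$, by Part~3 of Theorem~\ref{thm:shc}, unless $v=\max\{u,v\}$ or both are constant). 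Running the symmetric argument with $u$ a subminimum and the curve into $\mathcal{A}^-(u)$ gives the reverse-signed inequality. Adding the two, the first-order terms cancel identically (this is the usual Benguria–Brezis–Lieb cancellation), and one is left with a sum of two non-negative quantities — the defects in the two hidden convexity inequalities — being $\le 0$. Hence both defects vanish, which by the equality analysis in Theorem~\ref{thm:shc} forces $\max\{u,v\}$ to be proportional to $v$ on each connected component where it is not already equal to $v$; combined with $\max\{u,v\}-v\in X^{q,p}_0(\Omega)$ and connectedness of $\Omega$, the proportionality constant must be $1$, so $\max\{u,v\}=v$, i.e. $u\le v$ a.e., contradicting $|\{u>v\}|>0$.

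**Main obstacle.** The delicate point is the passage to the limit $t\to 0^+$ in the difference quotients of the $L^q$ and $p$-Dirichlet terms along $\sigma^t$, and the justification of Fatou's Lemma for the sign it produces — precisely the place where positivity of $u$ and $v$ is used (to make sense of $v^{1-q}$ and $u^{1-q}$ when $1<q<p$, and the convention $v^{q-1}=1$ when $q=1$). One must check the difference quotients $(\sigma^t-v)/t$ converge a.e. to $\tfrac1q v^{1-q}(\max\{u,v\}^q-v^q)$ and are monotone or dominated appropriately, exactly as in Proposition~\ref{prop:solmin}; the fact that $\max\{u,v\}\in X^{q,p}(\Omega)$ with the right boundary datum (via Lemma~\ref{lem:Upos}) is what makes $(\sigma^t-v)/t$ an admissible test function in the supersolution inequality. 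A secondary technical care is needed in the limit case $q=1$, where $\mathfrak{F}_{1,\alpha}$ is not Gateaux differentiable, but here the definition of sub/supersolution with the convention $|v|^{q-2}v=1$ makes the linear lower-order term behave well and the argument goes through verbatim.
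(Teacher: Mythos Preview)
Your ingredients are the right ones, but the assembly has a genuine gap. The mechanism you describe — differentiate the energy along $\sigma^t$ at $t\to 0^+$ and pick up the ``enhanced'' remainder from \eqref{rinforzino} — does not work as stated: the defect term in \eqref{rinforzino} is of order $(t(1-t))^{p/q}$, so after dividing by $t$ it behaves like $t^{p/q-1}\to 0$ (since $p>q$) and disappears in the limit. Your displayed first-order inequality is literally $0\le 0$, and the symmetric one is the same; nothing is left to force a contradiction.

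What actually makes the argument close --- and what the paper does explicitly --- is the \emph{pointwise additivity}
\[
\mathfrak{F}_{q,\alpha}(\max\{u,v\})+\mathfrak{F}_{q,\alpha}(\min\{u,v\})=\mathfrak{F}_{q,\alpha}(u)+\mathfrak{F}_{q,\alpha}(v),
\]
which you never invoke. Combining this with subminimality of $u$ tested against $\min\{u,v\}\in\mathcal{A}^-(u)$ and superminimality of $v$ tested against $\psi:=\max\{u,v\}\in\mathcal{A}^+(v)$ gives $\mathfrak{F}_{q,\alpha}(\psi)=\mathfrak{F}_{q,\alpha}(v)$. \emph{Now} apply hidden convexity at a fixed $t$ (the paper takes $t=1/2$) to the pair $(v,\psi)$: since $\sigma^{1/2}\in\mathcal{A}^+(v)$ by Proposition~\ref{prop:datumU}, superminimality forces equality in \eqref{eq:hc}, and the equality case of Theorem~\ref{thm:shc} with $r=q<p$ yields $\psi=v$ (or both constant, which is ruled out because a positive constant cannot be a supersolution). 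Note that the equality case here gives $\psi=v$ directly, not merely proportionality --- that conclusion is for $r=p$, not $r=q<p$ --- so your final ``proportionality constant must be $1$'' step is unnecessary and based on the wrong case.

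Finally, for $q=1$ the paper bypasses all of this: one simply tests both the sub- and supersolution inequalities with $(u-v)_+$, subtracts, and uses strict monotonicity of $z\mapsto|z|^{p-2}z$. Your comment that ``the argument goes through verbatim'' is not how the paper proceeds.
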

\begin{proof} 
We first observe that for $q=1$, the result is well-known. It can be obtained with exactly the same proof of the comparison principle for $p-$harmonic functions (see for example \cite[Theorem 2.15]{Lin2}). For completeness, we sketch the argument: it is sufficient to take the test function $\psi=(u-v)_+ \in X^{q,p}_0(\Omega)$ in the weak formulations for $u$ and $v$. This gives
\[
\int_{\Omega} \langle |\nabla u|^{p-2} \nabla u, \nabla (u-v)_+ \rangle \, dx \le \alpha\int_{\Omega} (u-v)_+ \, dx,
\]
and
\[
\int_{\Omega} \langle |\nabla v|^{p-2} \nabla v, \nabla (u-v)_+ \rangle \, dx \ge \alpha\int_{\Omega} (u-v)_+ \, dx. 
\]
By subtracting them, we obtain
\[
\int_{\{u>v\}} \langle |\nabla u|^{p-2} \nabla u-|\nabla v|^{p-2}\,\nabla v, \nabla u-\nabla v\rangle \, dx\le 0.
\]
If we now use that the vector field $z\mapsto |z|^{p-2}\,z$ is strictly monotone\footnote{In other words, we have 
\[
\langle |z|^{p-2}\,z-|w|^{p-2}\,w,z-w\rangle \ge 0, 
\]
and the equality holds if and only if $z=w$.
This property is a consequence of the strict convexity of the map $z\mapsto |z|^p/p$, whose gradient is precisely $z\mapsto |z|^{p-2}\,z$.}, we get the desired conclusion with standard arguments. We leave the details to the reader.  
\vskip.2cm\noindent	 
Let us now focus on the case $1<q<p$. With Proposition \ref{prop:solmin} and the hidden convexity property at hand, we can essentially reproduce the proof of the classical comparison principle for {\it strictly convex} integral functionals of the Calculus of Variations, see for example \cite[Lemma 1.1]{Gi}. The identification of equality cases in the hidden convexity property will play a crucial role.
\par
We define $\varphi=\min\{v,u\}$ and observe that it has the following properties 
\[
\varphi \in X^{q,p}(\Omega),\qquad u-\varphi=(u-v)_+ \in X^{q,p}_0(\Omega)\qquad \mbox{ and  }\qquad\varphi \le u \mbox{ in } \Omega,
\] 
so that it belongs to $\mathcal{A}^-(u)$.
Observe that, since $u$ is a positive subsolution of \eqref{LE}, from Proposition \ref{prop:solmin} it is a subminimum for $\mathfrak{F}_{q,\alpha}$, as well. By using this and the properties of $\varphi$, we get 
\begin{equation}\label{eq:cp1}
\mathfrak{F}_{q,\alpha}(\varphi) \ge \mathfrak{F}_{q,\alpha}(u).
\end{equation}
By recalling that the weak gradient of $\varphi$ is given by (see \cite[Corollary 6.18]{LL})
\[
\nabla \varphi =
\left\{\begin{array}{ll}
\nabla v, &\text{ a.\,e. on } \{v < u\}, \\
\nabla u, &\text{ a.\,e. on } \{u < v\}, \\
\nabla v = \nabla u, & \text{ a.\,e. on } \{v=u\},
\end{array}
\right.
\]
inequality \eqref{eq:cp1} entails that
\[
\frac{1}{p}\, \int_{\{v<u\}} |\nabla v|^{p} \, dx - \frac{\alpha}{q}\, \int_{\{v<u\}} |v|^q \, dx \ge \frac{1}{p}\, \int_{\{v<u\}} |\nabla u|^{p} \, dx - \frac{\alpha}{q}\, \int_{\{v<u\}} |u|^q \, dx. 
\]
If we add the quantity 
\[
\frac{1}{p} \int_{\{u \le v\}} |\nabla v|^{p} \, dx - \frac{\alpha}{q} \int_{\{u \le v\}} |v|^q \, dx,
\] 
on both sides, we end up with
\begin{equation}
\label{eq:cp2}
\mathfrak{F}_{q,\alpha}(v) \ge \mathfrak{F}_{q,\alpha}(\psi),\qquad \mbox{ where } \psi=\max\{u,v\} \in X^{q,p}(\Omega).
\end{equation}
Observe that $\psi$ has the following properties 
\[
\psi-v=(u-v)_+ \in X^{q,p}_0(\Omega)\qquad \mbox{ and } \qquad v \le \psi \mbox{ in }\Omega,
\]
i.\,e. it belongs to $\mathcal{A}^+(v)$.
Since $v$ is a positive supersolution of \eqref{LE}, it is a superminimum of $\mathfrak{F}_{q,\alpha}$ (again thanks to Proposition \ref{prop:solmin}). This remark and the fact that $\psi\in \mathcal{A}^+(v)$ imply that we must have
\[
\mathfrak{F}_{q,\alpha}(v) \le \mathfrak{F}_{q,\alpha}(\psi),
\]
as well. Thus equation \eqref{eq:cp2} must be an identity.
\par

We now set
\[
\sigma=\left(\frac{v^q+\psi^q}{2}\right)^\frac{1}{q} \in X^{q,p}(\Omega),
\] 
and use the hidden convexity property \eqref{eq:hc} with $t=1/2$, so to obtain
\begin{equation}
\label{eq:cp3}
\mathfrak{F}_{q,\alpha}(\sigma) \le \frac{1}{2}\, \mathfrak{F}_{q,\alpha}(v) + \frac{1}{2}\, \mathfrak{F}_{q,\alpha}(\psi) = \mathfrak{F}_{q,\alpha}(v).
\end{equation}
Since $\psi-v\in X^{q,p}_0(\Omega)$, by Proposition \ref{prop:datumU} we have $\sigma-v\in X^{q,p}_0(\Omega)$, as well. Moreover, since by construction $\psi\ge v$, we also have $\sigma\ge v$. We can thus test the superminimality of $v$ against $\sigma$ and get that actually also \eqref{eq:cp3} must hold as an equality. In particular, we have
\[
\int_\Omega |\nabla \sigma|^p\,dx=\frac{1}{2}\,\int_\Omega |\nabla v|^p\,dx+\frac{1}{2}\,\int_\Omega |\nabla \psi|^p\,dx.
\]
We use the equality cases of Theorem \ref{thm:shc} with $r=q<p$ and $t=1/2$, so to get that 
\[
\mbox{ either }\qquad \psi=v \qquad \mbox{ or }\qquad \psi \mbox{ and } v \mbox{ are both constant}. 
\]
In the first case we directly get the desired conclusion, since $\psi=\max\{u,v\}$. The second case can not occur, since $v$ is positive and from the equation we easily see that a positive constant can not be a supersolution. This concludes the proof.
\end{proof}
\begin{rem}
The assumption $(u-v)_+\in X^{q,p}_0(\Omega)$ is a weak surrogate of the usual condition
\[
v\ge u \qquad \mbox{ on }\partial\Omega,
\]
appearing in comparison principles. Whenever a trace theory is available (for example, if $\Omega$ is smooth enough), the two conditions coincide.
\end{rem}
From the comparison principle, we get the following uniqueness result. 
\begin{thm}[Uniqueness of minimizers]
\label{teo:uniquemin}
Let $\alpha>0$ and $1\le q<p<\infty$. Let $\Omega \subset \mathbb{R}^N$ be an open connected set, which is $q-$admissible. For every function $U \in X^{q,p}(\Omega)$, the minimization problem
\[
\inf_{\psi\in X^{q,p}(\Omega)} \Big\{\mathfrak{F}_{q,\alpha}(\psi)\, :\, \psi-U\in W^{1,p}_0(\Omega)\Big\},
\]
admits:
\begin{itemize}
\item[{\it (i)}] exactly one solution, when $U\in X^{q,p}(\Omega)\setminus W^{1,p}_0(\Omega)$ is non-negative. Moreover, such a solution is positive;
\vskip.2cm
\item[{\it (ii)}] exactly two solutions, when $U\in W^{1,p}_0(\Omega)$. In this case, both solutions have constant sign and they coincide, up to the choice of the sign.
\end{itemize}
\end{thm}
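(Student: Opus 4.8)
The plan is to combine the existence result of Theorem~\ref{teo:existence} with a symmetrization trick, the minimum principle for weakly $p$-superharmonic functions, and---most importantly---the comparison principle of Theorem~\ref{thm:comparison}, which is the only ingredient genuinely capable of forcing uniqueness here (see the last paragraph). First, by Theorem~\ref{teo:existence} a minimizer $u$ exists, and $|u|$ is a minimizer as well: indeed $\mathfrak{F}_{q,\alpha}(|u|)=\mathfrak{F}_{q,\alpha}(u)$ because $|\nabla |u||=|\nabla u|$ a.e., while $|u|-U=|u|-|U|\in X^{q,p}_0(\Omega)=W^{1,p}_0(\Omega)$ by Lemma~\ref{lem:Upos}~$(i)$ (using $U\ge 0$, hence $|U|=U$) and Proposition~\ref{prop:spazi0}. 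Thus a non-negative minimizer $w$ always exists, and it is non-trivial: if $w\equiv 0$ then in case $(i)$ we would get $U=-(w-U)\in W^{1,p}_0(\Omega)$, contrary to the hypothesis, while in case $(ii)$ the minimum is strictly negative, since $\mathfrak{F}_{q,\alpha}(t\,\phi)<0$ for $\phi\in C^\infty_0(\Omega)\setminus\{0\}$ and $t>0$ small (here $q<p$), so $0$ is not a minimizer (recall Remark~\ref{rem:zeroalbordo}). By Theorem~\ref{teo:existence}, $w$ is a weak supersolution of \eqref{LE}---a solution when $1<q<p$, satisfying $-\Delta_p w=\alpha$ when $q=1$---hence weakly $p$-superharmonic; as $\Omega$ is connected and $w\not\equiv 0$, the minimum principle gives $w>0$ a.e.\ in $\Omega$, and the weak Harnack inequality (equivalently, the strong minimum principle) gives the local lower bound $1/w\in L^\infty_{\rm loc}(\Omega)$.

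Next I show the non-negative minimizer is unique. Let $w_1,w_2$ be two non-negative minimizers; by the above each is positive a.e.\ and, again by Theorem~\ref{teo:existence}, is a weak solution of \eqref{LE} (with the convention $w^{q-1}=1$ if $q=1$), hence simultaneously a subsolution and a supersolution. Moreover $(w_1-w_2)_+\in X^{q,p}_0(\Omega)$, because $w_1-w_2=(w_1-U)-(w_2-U)\in W^{1,p}_0(\Omega)=X^{q,p}_0(\Omega)$ and taking the positive part preserves this by Lemma~\ref{lem:Upos}~$(ii)$ (with $U=0$). Theorem~\ref{thm:comparison}, applied with $(u,v)=(w_1,w_2)$ and then with $(u,v)=(w_2,w_1)$, gives $w_1\le w_2$ and $w_2\le w_1$ a.e., so $w_1=w_2$; denote by $w$ this unique non-negative (in fact positive) minimizer.

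Finally I determine all minimizers. Let $u$ be any minimizer; then $|u|$ is a non-negative minimizer, so $|u|=w$, and since $w>0$ a.e.\ we have $u(x)\in\{-w(x),w(x)\}$ for a.e.\ $x$, i.e.\ $u/w$ is well-defined a.e.\ and only assumes the values $\pm1$. Using $|u|=w$ and $1/w\in L^\infty_{\rm loc}(\Omega)$, on every open set $\Omega'$ compactly contained in $\Omega$ one has $|u\,\nabla(1/w)|=|\nabla w|/w\le C_{\Omega'}\,|\nabla w|\in L^p(\Omega')$, together with $(1/w)\,\nabla u\in L^p(\Omega';\mathbb{R}^N)$; by the Chain Rule in Sobolev spaces this gives $u/w\in W^{1,p}_{\rm loc}(\Omega)$, and a $\{\pm1\}$-valued Sobolev function on the connected set $\Omega$ has a.e.\ vanishing gradient, hence is constant. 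Thus $u=w$ a.e.\ or $u=-w$ a.e. In case $(ii)$, since $U=0$, both $w$ and $-w$ lie in $W^{1,p}_0(\Omega)$ and have the same energy, hence are minimizers, and by the above they are the only ones: exactly two solutions, of constant and opposite sign. In case $(i)$, if a minimizer $u$ satisfied $u=-w$ a.e., then $-w-U=u-U\in W^{1,p}_0(\Omega)$ and $w-U\in W^{1,p}_0(\Omega)$ would force $2\,U\in W^{1,p}_0(\Omega)$, contradicting $U\notin W^{1,p}_0(\Omega)$; hence every minimizer equals $w$, which is positive: exactly one solution.

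The main obstacle is conceptual: uniqueness of the positive \emph{solution} of \eqref{LE} does not by itself give uniqueness of the \emph{minimizer}, since $\mathfrak{F}_{q,\alpha}$ is not convex in the ordinary sense; this is why the uniqueness of the non-negative minimizer has to be obtained through the comparison principle---ultimately a consequence of the hidden convexity of Theorem~\ref{thm:shc} and of the identification of its equality cases---rather than from a convexity argument. A second, more technical point is the passage from the pointwise dichotomy $u=\pm w$ a.e.\ to a global sign: this relies on the local lower bound $1/w\in L^\infty_{\rm loc}(\Omega)$, which is the only remnant of regularity theory used, combined with the identity $|u|=w$, precisely what keeps $u\,\nabla(1/w)$ in $L^p_{\rm loc}$.
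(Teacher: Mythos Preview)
Your proof is correct and uses the same core ingredients as the paper---existence (Theorem~\ref{teo:existence}), the fact that $|u|$ is admissible via Lemma~\ref{lem:Upos}, the minimum principle, and Theorem~\ref{thm:comparison}---but you organise them differently, and the difference is worth noting.

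The paper first shows that \emph{every} minimizer has constant sign, before invoking the comparison principle. The key observation there is that if both $u$ and $|u|$ are minimizers, then summing their Euler--Lagrange relations yields that $u_+$ itself is weakly $p$-superharmonic (because $\tfrac{1}{2}(|\nabla u|^{p-2}\nabla u+|\nabla|u||^{p-2}\nabla|u|)=|\nabla u_+|^{p-2}\nabla u_+$ and, for $1<q<p$, $\tfrac{1}{2}(|u|^{q-2}u+|u|^{q-1})=(u_+)^{q-1}$). The minimum principle applied to $u_+$ then forces $u>0$ or $u<0$ at once, and uniqueness follows by two applications of Theorem~\ref{thm:comparison}.

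You instead first prove uniqueness of the non-negative minimizer $w$ via the comparison principle, and only afterwards deduce the constant sign of a general minimizer $u$ from the pointwise identity $|u|=w$ through the quotient $u/w$. This is perfectly legitimate, and your use of $|u|=w$ to control $u\,\nabla(1/w)$ (avoiding any appeal to local boundedness of $w$) is the right idea. The one place that deserves a line more of care is the sentence ``by the Chain Rule in Sobolev spaces this gives $u/w\in W^{1,p}_{\rm loc}$'': the product rule for $u\cdot(1/w)$ is not immediate from $1/w\in W^{1,p}_{\rm loc}\cap L^\infty_{\rm loc}$ alone when $u$ is not known to be locally bounded. A clean fix is to truncate $u$ at level $M$, apply the standard product rule to $u_M/w$, and pass to the limit using dominated convergence (the domination $|u_M\,\nabla(1/w)|\le |\nabla w|/w\in L^p_{\rm loc}$ is exactly your observation). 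Alternatively, one may bypass the quotient entirely by noting that $u+w\ge 0$ takes only the values $0$ and $2w\ge 2c_{\Omega'}>0$ on any $\Omega'\Subset\Omega$, so that $\min\{u+w,c_{\Omega'}\}\in W^{1,p}_{\rm loc}$ is $\{0,c_{\Omega'}\}$-valued.

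In short: the paper's averaging trick gives the constant sign with no Sobolev calculus beyond the minimum principle, while your route trades that trick for a quotient argument that is equally valid but needs the small truncation step spelled out.
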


\begin{proof}
Observe that we already know that the minimization problem does admit a solution, by virtue of Theorem \ref{teo:existence}. We will also rely on the optimality conditions, contained in the same result.
\vskip.2cm\noindent

\noindent First of all we note that   a minimizer $u$ can not identically vanish.
Indeed this is trivial  in the case $(i)$ since the null function is not admissible for the minimization problem.
In the case $(ii)$ it is sufficient to take a function $\psi \in W_0^{1,p}(\Omega)\setminus\{0\}$ and $0 < t \ll 1$ to get \[ 
\mathfrak{F}_{q,\alpha}(t\, \psi) = \frac{t^p}{p} \int_{\Omega} |\nabla \psi|^p \, dx - \frac{\alpha\,t^q}{q} \int_{\Omega} |\psi|^q \, dx < 0 = \mathfrak{F}_{q,\alpha}(0), 
\]
since $q<p$. 
\par
Furthermore, we observe that, in both cases $(i)$ and $(ii)$, if $u$ is a minimizer then $|u|$ is a minimizer, as well. Indeed,  $|u|-U\in W^{1,p}_0(\Omega)$ thanks to Lemma \ref{lem:Upos} and to the fact that $U$ is non-negative. 
Hence, since $\mathfrak{F}_{q,\alpha}$ is even and $|u|$ is still admissible,
we obtain that $|u|$ is a minimizer, as well. By minimality for both $u$ and $|u|$,  we deduce that $u_+$ is weakly $p-$superharmonic.
 Indeed, when $1<q<p$ we  have 
\[
\int_\Omega \langle |\nabla u|^{p-2}\,\nabla u,\nabla \psi\rangle\,dx=\alpha\,\int_\Omega |u|^{q-2}\,u\,\psi\,dx,\qquad\mbox{ for every } \psi\in W^{1,p}_0(\Omega),
\]
and
\[
\int_\Omega \langle |\nabla |u||^{p-2}\,\nabla |u|,\nabla \psi\rangle\,dx=\alpha\,\int_\Omega |u|^{q-1}\,\psi\,dx,\qquad\mbox{ for every } \psi\in W^{1,p}_0(\Omega).
\]
We can sum up these two integral identities: by observing that 
\[
\frac{|u|^{q-2}\,u+|u|^{q-1}}{2}=(u_+)^{q-1},\qquad \mbox{ a.\,e. in }\Omega,
\]
and\footnote{It is sufficient to use the following classical fact from the theory of Sobolev spaces
\[
\nabla |u|=\left\{\begin{array}{cc}
\nabla u,& \mbox{ a.\,e on } \{u>0\},\\
-\nabla u,& \mbox{ a.\,e on } \{u<0\},\\
0,& \mbox{ a.\,e on } \{u=0\},
\end{array}
\right.
\]
see for example \cite[Theorem 6.17]{LL}.}
\[
\frac{|\nabla u|^{p-2}\,\nabla u+|\nabla |u||^{p-2}\,\nabla |u|}{2}=|\nabla u_+|^{p-2}\,\nabla u_+,\qquad \mbox{ a.\,e. in }\Omega,
\]
we then obtain
\[
\int_\Omega \langle |\nabla u_+|^{p-2}\,\nabla u_+,\nabla \psi\rangle\,dx=\alpha\,\int_\Omega (u_+)^{q-1}\,\psi\,dx,\qquad\mbox{ for every } \psi\in W^{1,p}_0(\Omega).
\]
In particular,  $u_+$ is a weakly $p-$superharmonic function on $\Omega$. 
In the case $q=1$, a little additional care is needed. From Theorem \ref{teo:existence}, we know that 
\[
\int_\Omega \langle |\nabla u|^{p-2}\,\nabla u,\nabla \psi\rangle\,dx\ge -\alpha\,\int_\Omega \psi\,dx,\qquad\mbox{ for every } \psi\in W^{1,p}_0(\Omega),\ \psi\ge 0,
\]
and
\[
\int_\Omega \langle |\nabla |u||^{p-2}\,\nabla |u|,\nabla \psi\rangle\,dx=\alpha\,\int_\Omega \psi\,dx,\qquad\mbox{ for every } \psi\in W^{1,p}_0(\Omega).
\]
We can sum up the two relations as above, when testing with a non-negative $\psi$: the terms containing $\alpha$ cancel and
 we now directly get that $u_+$ is weakly $p-$superharmonic.
\vskip.2cm\noindent 
We can now prove uniqueness of the minimizer. Let us start with case $(i)$. In this case  any minimizer must be positive. Indeed, let $u$ be a minimizer. The assumption $u-U\in W^{1,p}_0(\Omega)$ and the fact that $U\in X^{q,p}(\Omega)\setminus W^{1,p}_0(\Omega)$ is non-negative entail that we must have
\[
u_+\not\equiv 0.
\]
Indeed, if $u_+$ would identically vanish, we would have $u=-u_-$ and $|u|=u_-$. By Lemma \ref{lem:Upos} we know that $|u|-U\in W^{1,p}_0(\Omega)$. Thus we would obtain
\[
U=\frac{(U-u)+(U-|u|)}{2}\in W^{1,p}_0(\Omega),
\]
which contradicts the assumption on $U$.
\par
Since $u_+$ is weakly $p-$superharmonic on the connected set $\Omega$, the minimum principle  implies that we must have $u_+>0$ almost everywhere in $\Omega$. In particular, we get
\[
u=u_+>0\qquad \mbox{ in }\Omega,
\]
as desired.
\par
Now we assume  by contradiction that  the minimization problem above admits two distinct minimizers $v,u \in X^{q,p}(\Omega)$. From the discussion above, $v$ and $u$ are positive functions.
Of course, we have
\[
(u-v)_+\in W^{1,p}_0(\Omega)\qquad \mbox{ and }\qquad (v-u)_+\in W^{1,p}_0(\Omega).
\]
We can then apply Theorem \ref{thm:comparison}, first by considering $u$ as a subsolution of \eqref{LE} and $v$ as a supersolution, then the other way round. In conclusion we get
\[
v\ge u\quad \mbox{ a.\,e. in } \Omega\qquad \mbox{ and }\qquad u\ge v\quad \mbox{ a.\,e. in } \Omega.
\] 
This implies that $v$ and $u$ must coincide in $\Omega$, thus obtaining a contradiction.
\vskip.2cm\noindent
We now focus on case $(ii)$, which is slightly subtler. In this case, we first prove that each minimizer is either positive or negative.
Indeed, let $u$ be a minimizer, we write 
\[
u=u_+-u_-\qquad \mbox{ and observe that }\qquad u_+,u_-\in W^{1,p}_0(\Omega).
\]
 If $u_+\not\equiv 0$  then  the same argument as above shows  that $u=u_+>0$ almost everywhere in $\Omega$. If on the contrary $u_+\equiv 0$, we get $u=-u_-$. Since the functional $\mathfrak{F}_{q,\alpha}$ is even, we get that $-u=u_-$ is still a minimizer. It is actually a non-negative minimizer, thus it solves the relevant Euler-Lagrange equation, by Theorem \ref{teo:existence}. In particular, $u_-$ is a non-negative weakly $p-$superharmonic function, which is not identically vanishing. Again by the minimum principle, we get that $u_-$ must be positive on $\Omega$ and thus the desired conclusion follows.
\par
Finally, we assume to have two distinct minimizers $v,u \in W^{1,p}_0(\Omega)$. The previous considerations and Theorem \ref{teo:existence} show that $v$ and $u$ are both constant sign  solutions of \eqref{LE}, not identically vanishing. If we assume that they are both positive, by using Theorem \ref{thm:comparison} as in the first part of the proof, we get again that $u=v$ in $\Omega$, which is not possible. Similarly, if both are negative, then $-u$ and $-v$ are positive solutions and again we get that they must coincide. The only possibility left is thus that $u$ is positive and $v$ is negative: in this case, we can apply the comparison principle as before, to the pair $u$ and $-v$. This finally gives that we must have
\[
u=-v,\qquad \mbox{ a.\,e. in }\Omega.
\]
The proof is now over.
\end{proof}
By joining the previous result and Remark \ref{rem:unicasolpos}, we get the following uniqueness result for the Lane-Emden equation \eqref{LE}.
\begin{cor}
\label{coro:unicasolpos}
Let $\alpha>0$ and $1\le q<p<\infty$. Let $\Omega \subset \mathbb{R}^N$ be an open connected set, which is $q-$admissible. For every $U \in X^{q,p}(\Omega)$ non-negative, the boundary value problem 
\[
\left\{\begin{array}{l}
-\Delta_{p} u = \alpha\, |u|^{q-2}\,u, \mbox{ in } \Omega,\\
u-U\in W^{1,p}_0(\Omega),\\
u>0, \mbox{ in }\Omega,
\end{array}
\right.
\]
admits a unique solution.
\end{cor}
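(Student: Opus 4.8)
This corollary is a bookkeeping consequence of three facts already at our disposal: the existence-and-optimality statement (Theorem \ref{teo:existence}), the uniqueness of minimizers for $\mathfrak{F}_{q,\alpha}$ with prescribed non-negative datum (Theorem \ref{teo:uniquemin}), and the equivalence ``positive weak solution of \eqref{LE} $\iff$ minimizer of $\mathfrak{F}_{q,\alpha}$ with respect to its own boundary datum'' recorded in Remark \ref{rem:unicasolpos} (which in turn rests on Proposition \ref{prop:solmin}). Accordingly I would split the argument into existence of a positive solution and its uniqueness.

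\textbf{Existence.} Apply Theorem \ref{teo:uniquemin} to the datum $U$. If $U\in X^{q,p}(\Omega)\setminus W^{1,p}_0(\Omega)$, case $(i)$ produces a \emph{unique} minimizer $u$ of the problem $\inf\{\mathfrak{F}_{q,\alpha}(\psi):\psi-U\in W^{1,p}_0(\Omega)\}$, and it is positive. If instead $U\in W^{1,p}_0(\Omega)$, case $(ii)$ produces exactly two minimizers, of constant sign and coinciding up to the sign; since a minimizer cannot vanish identically, exactly one of them is positive, and I call it $u$. In either case $u$ is a minimizer of $\mathfrak{F}_{q,\alpha}$ subject to $u-U\in W^{1,p}_0(\Omega)$, so Theorem \ref{teo:existence} applies: for $1<q<p$ it gives that $u$ solves \eqref{LE}; for $q=1$, since $u$ is a \emph{non-negative} minimizer, the last item of Theorem \ref{teo:existence} gives $-\Delta_p u=\alpha$ in the weak sense, which is precisely \eqref{LE} under the convention $|u|^{q-2}u=u^{q-1}=1$. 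As $u-U\in W^{1,p}_0(\Omega)$ and $u>0$ in $\Omega$ by construction, this $u$ solves the stated boundary value problem.

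\textbf{Uniqueness.} Conversely, let $u$ be any solution of the boundary value problem; in particular $u\in X^{q,p}(\Omega)$ is a positive weak solution of \eqref{LE} with $u-U\in W^{1,p}_0(\Omega)$. Being positive and simultaneously a sub- and a supersolution, $u$ is both a subminimum and a superminimum of $\mathfrak{F}_{q,\alpha}$ by Proposition \ref{prop:solmin}, hence a minimum with respect to its own boundary datum, which is $U$ (Remark \ref{rem:unicasolpos}). Thus $u$ solves the minimization problem of Theorem \ref{teo:uniquemin}: in case $(i)$ that solution is unique, and in case $(ii)$ there are exactly two, of opposite constant sign, so the positivity of $u$ singles it out as the positive one. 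Either way $u$ is uniquely determined, which proves uniqueness. Alternatively, uniqueness may be obtained directly from the comparison principle: if $u_1,u_2$ are two positive solutions with the same datum $U$, then $u_1-u_2\in W^{1,p}_0(\Omega)=X^{q,p}_0(\Omega)$ (Proposition \ref{prop:spazi0}), so $(u_1-u_2)_+\in X^{q,p}_0(\Omega)$, and applying Theorem \ref{thm:comparison} on the connected set $\Omega$ first with $u_1$ subsolution / $u_2$ supersolution and then with the roles reversed gives $u_1=u_2$ a.e.

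\textbf{Main obstacle.} There is no serious obstacle once Theorems \ref{teo:existence}, \ref{teo:uniquemin} and Remark \ref{rem:unicasolpos} are available; the only point requiring care is the limit exponent $q=1$, where $\mathfrak{F}_{q,\alpha}$ fails to be Gâteaux differentiable, so one must invoke the tailored optimality conditions of Theorem \ref{teo:existence} (a non-negative minimizer solves $-\Delta_p u=\alpha$) rather than a naive first-variation argument, and keep track of the convention $|u|^{q-2}u=1$ in the statement of \eqref{LE}.
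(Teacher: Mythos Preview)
Your proof is correct and follows exactly the route the paper intends: the corollary is stated right after Theorem \ref{teo:uniquemin} with the one-line justification ``By joining the previous result and Remark \ref{rem:unicasolpos}'', and you have simply unpacked that sentence carefully, including the case distinction $U\notin W^{1,p}_0(\Omega)$ versus $U\in W^{1,p}_0(\Omega)$ and the delicate $q=1$ endpoint. Your alternative uniqueness argument via Theorem \ref{thm:comparison} is also valid and is in fact the same mechanism already used inside the proof of Theorem \ref{teo:uniquemin}.
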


\begin{defn}
\label{defn:w}
Let $\alpha>0$ and $1\le q<p<\infty$. Let $\Omega\subset \mathbb{R}^N$ be an open connected set, which is $q-$admissible. We will indicate by $w_{\Omega,\alpha}\in W^{1,p}_0(\Omega)$ the unique positive solution of 
\[
\min_{\psi\in W^{1,p}_0(\Omega)} \mathfrak{F}_{q,\alpha}(\psi).
\]
In light of Theorem \ref{teo:uniquemin}, such a definition is well-posed. We also observe that, by Corollary \ref{coro:unicasolpos}, such a function is also the unique positive solution of \eqref{LE} with homogeneous Dirichlet boundary conditions. In the case $\alpha=1$, we will simply indicate this function by $w_\Omega$. By recalling Remark \ref{rem:scalings}, we have the relation
\begin{equation}
\label{walfa}
w_{\Omega,\alpha}=\alpha^\frac{1}{p-q}\,w_{\Omega}.
\end{equation}
\end{defn}
\begin{rem}[The case $q=1$ in a ball]
\label{rem:case1}
In this case, the function $w_{\Omega,\alpha}$ can be explicitly computed, by recalling that for every $N\ge 1$ we have
\[
w_{B_1}(x)=\frac{p-1}{p}\,N^{-\frac{1}{p-1}}\,\left(1-|x|^\frac{p}{p-1}\right),\qquad \mbox{ for } x\in B_1.
\]
\end{rem}
By combining the comparison principle with standard properties of solutions to elliptic PDEs, we get the following ``hierarchy'' of solutions for the Lane-Emden equation \eqref{LE} with homogeneous Dirichlet boundary conditions. This asserts that all solutions must be ``trapped''  between $w_{\Omega,\alpha}$ and $-w_{\Omega,\alpha}$. 

\begin{cor}[Hierarchy of solutions]
\label{cor:hierarchy}
Let $\alpha>0$ and $1<q<p<\infty$. Let $\Omega \subset \mathbb{R}^N$ be an open connected set, which is $q-$admissible. 
Then for every sign-changing weak solution $v\in W^{1,p}_0(\Omega)$ of \eqref{LE} with homogeneous Dirichlet boundary conditions, we have
\[
|v|\le w_{\Omega,\alpha},\qquad \mbox{ a.\,e. in } \Omega. 
\]
\end{cor}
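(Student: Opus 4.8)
The statement is equivalent to the two inequalities $v\le w_{\Omega,\alpha}$ and $-v\le w_{\Omega,\alpha}$ a.e.\ in $\Omega$. Since the operator $-\Delta_p$ and the reaction term $t\mapsto\alpha\,|t|^{q-2}t$ are both odd, $-v$ is again a sign-changing weak solution of \eqref{LE} lying in $W^{1,p}_0(\Omega)$, so it is enough to prove the first inequality; the second one then follows by applying the argument to $-v$, and $|v|=\max\{v,-v\}\le w_{\Omega,\alpha}$ a.e. Throughout I abbreviate $w:=w_{\Omega,\alpha}$, which by Definition~\ref{defn:w} is the positive solution of \eqref{LE} with homogeneous boundary conditions, hence in particular a positive weak \emph{supersolution} of \eqref{LE}, and belongs to $W^{1,p}_0(\Omega)=X^{q,p}_0(\Omega)\subset X^{q,p}(\Omega)$ by Proposition~\ref{prop:spazi0} (recall that $\Omega$ is $q$-admissible).

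The obstruction to invoking Theorem~\ref{thm:comparison} directly is that $v_+$ need not be a \emph{positive} function, whereas the comparison principle is stated for two positive functions. I would circumvent this by a perturbation. Fix $\varepsilon\in(0,1)$ and set
\[
u_\varepsilon:=\max\{v,\varepsilon\,w\}=\varepsilon\,w+(v-\varepsilon\,w)_+\in W^{1,p}_0(\Omega).
\]
First, $\varepsilon\,w$ is itself a positive weak subsolution of \eqref{LE}: for every non-negative $\psi\in C^\infty_0(\Omega)$ we have
\[
\int_\Omega\langle|\nabla(\varepsilon\,w)|^{p-2}\nabla(\varepsilon\,w),\nabla\psi\rangle\,dx
=\varepsilon^{\,p-1}\,\alpha\int_\Omega w^{q-1}\,\psi\,dx
\le\varepsilon^{\,q-1}\,\alpha\int_\Omega w^{q-1}\,\psi\,dx
=\alpha\int_\Omega(\varepsilon\,w)^{q-1}\,\psi\,dx,
\]
where I used $0<\varepsilon<1$ together with $q-1<p-1$: this is precisely the point where sub-homogeneity $q<p$ is used. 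Since $v$ is a solution (hence a subsolution) of \eqref{LE} and the reaction $t\mapsto\alpha\,|t|^{q-2}t$ is non-decreasing on $\mathbb{R}$ (here $q>1$), the pointwise maximum $u_\varepsilon$ of the two subsolutions $v$ and $\varepsilon\,w$ is again a weak subsolution of \eqref{LE}; this is the standard fact that the maximum of two subsolutions of a quasilinear equation with non-decreasing reaction term is a subsolution. Moreover $u_\varepsilon\ge\varepsilon\,w>0$ in $\Omega$, so $u_\varepsilon$ is positive, and $u_\varepsilon\in X^{q,p}(\Omega)$ as above.

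It then remains to apply Theorem~\ref{thm:comparison} on the open connected set $\Omega$, to the positive subsolution $u_\varepsilon$ and the positive supersolution $w$. The boundary compatibility holds: since $\varepsilon\,w\le w$,
\[
(u_\varepsilon-w)_+=\big(\max\{v,\varepsilon\,w\}-w\big)_+=(v-w)_+,
\]
and $(v-w)_+\in W^{1,p}_0(\Omega)=X^{q,p}_0(\Omega)$ because both $v$ and $w$ lie in $W^{1,p}_0(\Omega)$. The comparison principle therefore yields $u_\varepsilon\le w$ a.e.\ in $\Omega$, and since $v\le u_\varepsilon$ by construction, we conclude $v\le w$ a.e.\ in $\Omega$; running the same argument for $-v$ finishes the proof. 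The only non-routine ingredients are the strict inequality $q<p$, which is what makes $\varepsilon\,w$ a subsolution (and hence allows the perturbation that produces a genuinely positive subsolution), and the stability of the class of subsolutions under pointwise maxima — a standard property of solutions to this type of elliptic equation. Everything else is a bookkeeping application of Theorem~\ref{thm:comparison} and Proposition~\ref{prop:spazi0}.
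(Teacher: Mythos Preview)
Your proof is correct and follows essentially the same strategy as the paper: construct a positive subsolution dominating $v$ by taking a pointwise maximum, then apply Theorem~\ref{thm:comparison}. The paper simply takes $\varepsilon=1$, i.e.\ $V=\max\{v,w_{\Omega,\alpha}\}$ (so your perturbation by $\varepsilon<1$ and the accompanying appeal to $q<p$ are unnecessary, since $w_{\Omega,\alpha}$ is already a solution and hence a subsolution), and it spells out in detail the ``max of two subsolutions is a subsolution'' step that you cite as standard, via a Lipschitz approximation of the Heaviside function.
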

\begin{proof}
Let $v\in W^{1,p}_0(\Omega)$ be a solution of \eqref{LE}. We claim that $V:=\max\{v,w_{\Omega,\alpha}\}\in W^{1,p}_0(\Omega)$ is a positive weak subsolution of the same equation. If this were true, then we would get from Theorem \ref{thm:comparison} that 
\[
v\le V\le w_{\Omega,\alpha},\qquad \mbox{ a.\,e. in }\Omega.
\]
By repeating the argument with $-v$ (which is still a weak solution of the same equation), we would finally get the desired conclusion.
\par
We are left with proving that $\max\{v,w_{\Omega,\alpha}\}$ is a weak subsolution. This is quite classical, we briefly sketch the argument: for every $n\in\mathbb{N}\setminus\{0\}$, we take 
\[
H_n(t)=\left\{\begin{array}{rl}
0, & \mbox{ if }t\le 0,\\
n\,t,& \mbox{ if } 0\le t\le 1/n,\\
1,& \mbox{ if } t\ge 1/n,
\end{array}
\right.
\]
i.e. this is a Lipschitz approximation of the Heaviside step function. For every $\psi\in C^\infty_0(\Omega)$ non-negative, we then insert in the weak formulations of the equations for $v$ and $w_{\Omega,\alpha}$, the test functions
\[
\varphi=H_n(v-w_{\Omega,\alpha})\,\psi\qquad \mbox{ and }\qquad \varphi=(1-H_n(v-w_{\Omega,\alpha}))\,\psi,
\]
respectively. We thus get
\[
\begin{split}
\int_\Omega \langle |\nabla v|^{p-2}\,\nabla v,\nabla v-\nabla w_{\Omega,\alpha}\rangle\,H_n'(v-w_{\Omega,\alpha})\,\psi\,dx&+\int_\Omega \langle |\nabla v|^{p-2}\,\nabla v,\nabla \psi\rangle\,H_n(v-w_{\Omega,\alpha})\,dx\\
&=\alpha\,\int_\Omega |v|^{q-2}\,v\,H_n(v-w_{\Omega,\alpha})\,\psi\,dx,
\end{split}
\]
and
\[
\begin{split}
-\int_\Omega \langle |\nabla w_{\Omega,\alpha} |^{p-2}\,\nabla w_{\Omega,\alpha},\nabla v-\nabla w_{\Omega,\alpha}\rangle&\,H_n'(v-w_{\Omega,\alpha})\,\psi\,dx\\
&+\int_\Omega \langle |\nabla w_{\Omega,\alpha}|^{p-2}\,\nabla w_{\Omega,\alpha},\nabla \psi\rangle\,(1-H_n(v-w_{\Omega,\alpha}))\,dx\\
&=\alpha\,\int_\Omega w_{\Omega,\alpha}^{q-1}\,(1-H_n(v-w_{\Omega,\alpha}))\,\psi\,dx.
\end{split}
\]
We now sum up these two identities, use that the vector field $z\mapsto |z|^{p-2}\,z$ is monotone and that $H_n$ is non-decreasing. We can thus obtain
\[
\begin{split}
\int_\Omega \langle |\nabla v|^{p-2}\,\nabla v,\nabla \psi\rangle\,H_n(v-w_{\Omega,\alpha})\,dx
&+\int_\Omega \langle |\nabla w_{\Omega,\alpha}|^{p-2}\,\nabla w_{\Omega,\alpha},\nabla \psi\rangle\,(1-H_n(v-w_{\Omega,\alpha}))\,dx\\
&\le \alpha\,\int_\Omega |v|^{q-2}\,v\,H_n(v-w_{\Omega,\alpha})\,\psi\,dx\\
&+\alpha\,\int_\Omega w_{\Omega,\alpha}^{q-1}\,(1-H_n(v-w_{\Omega,\alpha}))\,\psi\,dx.
\end{split}
\]
We can now pass to the limit as $n$ goes to $\infty$, with a straightforward application of the Lebesgue Dominated Convergence Theorem. By observing that for almost every $x\in\Omega$ we have 
\[
\lim_{n\to\infty} H_n(v(x)-w_{\Omega,\alpha}(x))=\left\{\begin{array}{rl}
1,& \mbox{ if } v(x)\ge w_{\Omega,\alpha}(x),\\
0,& \mbox{ otherwise},
\end{array}
\right.
\]
and recalling that (see again \cite[Corollary 6.18]{LL})
\[
\nabla V=\nabla \max\{v,w_{\Omega,\alpha}\} =
\left\{\begin{array}{ll}
\nabla w_{\Omega,\alpha}, &\text{ a.\,e. on } \{v < w_{\Omega,\alpha}\}, \\
\nabla v, &\text{ a.\,e. on } \{w_{\Omega,\alpha} < v\}, \\
\nabla v = \nabla w_{\Omega,\alpha}, & \text{ a.\,e. on } \{v=w_{\Omega,\alpha}\},
\end{array}
\right.
\]
we finally obtain
\[
\int_\Omega \langle |\nabla V |^{p-2}\,\nabla V,\nabla\psi\rangle\,dx\le \alpha\, \int_\Omega V^{q-1}\,\psi\,dx,\quad \text{ for every } \psi \in C_0^{\infty}(\Omega),\ \psi \ge 0.
\]
Thus $V$ is a positive weak subsolution of \eqref{LE} and the proof is over.
\end{proof}

The following result collects some basic properties of the positive solution in the case of an interval. In light of \eqref{walfa}, it is not restrictive to take $\alpha=1$. This will be needed in the next section.
\begin{lemma}[One-dimensional case]
\label{lem:1d}
Let  $1\le q<p<\infty$. If we denote by $I=(-1,1)$, the function $w_I\in W^{1,p}_0(I)$ has the following properties:
\begin{enumerate}
\item it is even, monotone increasing on $(-1,0)$ and monotone decreasing on $(0,1)$;
\vskip.2cm
\item both $w_I$ and $|w_I'|^{p-2}\,w_I'$ belongs to $C^1(\overline{I})$;
\vskip.2cm
\item it is the unique solution of 
\begin{align}\label{eq:mixedproblem}
\left\{\begin{array}{rcll}
-(|w_I'|^{p-2}\, w_I')'&=& w_I^{q-1},&\text{ in } (-1,0), \\
w_I(-1)&=&0, \\
w_I'(0)&=&0;
\end{array}
\right.
\end{align}
\item $w_I'(t)>0$ for every $t\in (-1,0)$;
\vskip.2cm
\item it holds 
\begin{equation}\label{eq:C}
\int_{-1}^0 |w_I|^q \, dt =\left( \frac{2}{\pi_{p,q}} \right)^{\frac{p\,q}{p-q}},
\end{equation}
where $\pi_{p,q}$ is defined in \eqref{pi}.
\end{enumerate}
\end{lemma}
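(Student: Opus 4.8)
The plan is to prove the regularity statement (2) first, since (1) and (4) follow from it with little effort. As $I\subset\mathbb{R}$ is one-dimensional, $W^{1,p}_0(I)$ embeds into $C^0(\overline I)$, so $w_I$ is continuous on $\overline I$ and hence so is $w_I^{q-1}$ (with the convention $w_I^{q-1}\equiv 1$ when $q=1$). The weak form of $-(|w_I'|^{p-2}w_I')'=w_I^{q-1}$ then says that the distributional derivative of $g:=|w_I'|^{p-2}w_I'$ equals $-w_I^{q-1}\in C^0(\overline I)$, so $g\in C^1(\overline I)$; since $s\mapsto|s|^{p-2}s$ is an increasing homeomorphism of $\mathbb{R}$, its inverse $h$ is continuous and $w_I'=h(g)$ has a continuous representative on $\overline I$. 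Hence $w_I\in C^1(\overline I)$ and $|w_I'|^{p-2}w_I'\in C^1(\overline I)$, which is (2).

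For the symmetry in (1) I would use that $\mathfrak{F}_{q,1}$ is invariant under $x\mapsto -x$: since $w_I$ is the \emph{unique} positive minimizer of $\mathfrak{F}_{q,1}$ on $W^{1,p}_0(I)$ (Definition \ref{defn:w}), the reflected function $x\mapsto w_I(-x)$ is again that unique minimizer, so $w_I$ is even. In particular $w_I'$ is odd and continuous, hence $w_I'(0)=0$, so also $g(0)=0$. Now (4) is immediate: on $(-1,0)$ one has $g'=-w_I^{q-1}<0$ (as $w_I>0$ there), so $g$ is strictly decreasing on $(-1,0)$, and since $g(0)=0$ this forces $g>0$, hence $w_I'>0$, throughout $(-1,0)$. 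Thus $w_I$ is strictly increasing on $(-1,0)$ and, by evenness, strictly decreasing on $(0,1)$, completing (1).

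For (3), restricting the equation to $(-1,0)$ and using $w_I(-1)=0$ together with $w_I'(0)=0$ shows that $w_I|_{(-1,0)}$ solves \eqref{eq:mixedproblem}. Conversely, let $u$ be a positive solution of \eqref{eq:mixedproblem}; by the same bootstrap as in (2) it is $C^1$ up to the boundary with $|u'|^{p-2}u'\in C^1$, and since $u'(0)=0$ its even reflection $\widetilde u$ across the origin is a $C^1(\overline I)$ function for which $|\widetilde u'|^{p-2}\widetilde u'$ is also $C^1(\overline I)$; hence $\widetilde u\in W^{1,p}_0(I)$ is a classical (so weak) solution of \eqref{LE} on all of $I$. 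By the uniqueness of the positive solution on the interval (Corollary \ref{coro:unicasolpos}), $\widetilde u=w_I$, i.e.\ $u=w_I|_{(-1,0)}$.

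Finally, for (5) I would identify $w_I$, up to a positive factor, with the optimizer of $\lambda_{p,q}(I)$. Testing $-(|w_I'|^{p-2}w_I')'=w_I^{q-1}$ with $w_I$ gives $\int_I|w_I'|^p\,dt=\int_I|w_I|^q\,dt=:A$, so $w_I$ is admissible in the Rayleigh quotient defining $\lambda_{p,q}(I)$, with value $A^{-(p-q)/q}$. On the other hand the minimizer of $\lambda_{p,q}(I)$, normalized in $L^q$ and taken non-negative, solves \eqref{LE} with $\alpha=\lambda_{p,q}(I)$ and is positive by the minimum principle, hence by Corollary \ref{coro:unicasolpos} and Remark \ref{rem:scalings} it is a positive multiple of $w_I$; therefore $w_I$ is itself a minimizer and $\lambda_{p,q}(I)=A^{-(p-q)/q}$, i.e.\ $A=\lambda_{p,q}(I)^{-q/(p-q)}$. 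Since $\pi_{p,q}^p=\lambda_{p,q}((0,1))$ by definition, the one-dimensional scaling $\lambda_{p,q}((0,L))=L^{1-p-p/q}\lambda_{p,q}((0,1))$ and translation invariance give $\lambda_{p,q}(I)=\lambda_{p,q}((0,2))=2^{1-p-p/q}\pi_{p,q}^p$; plugging this in and using evenness, $\int_{-1}^0|w_I|^q\,dt=\tfrac12 A=\tfrac12\big(2^{1-p-p/q}\pi_{p,q}^p\big)^{-q/(p-q)}=(2/\pi_{p,q})^{pq/(p-q)}$ after simplifying the exponents, which is \eqref{eq:C}. The only points needing real care are the up-to-the-boundary $C^1$ regularity in (2) — where the inversion of $s\mapsto|s|^{p-2}s$ is what makes the argument work uniformly in $p$ — and the verification in (3) that the even reflection of a mixed-problem solution is genuinely a weak solution across the origin; the rest is routine.
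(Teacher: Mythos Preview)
Your proof is correct. The overall strategy parallels the paper's, but several individual arguments differ in execution, and in a couple of places your route is more self-contained.

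For (2), the paper cites an external reference (Dr\'abek--Man\'asevich) asserting existence and $C^1$ regularity of the unique positive solution of the Dirichlet problem on $I$, and then identifies it with $w_I$ by uniqueness. Your direct bootstrap---Sobolev embedding gives $w_I\in C^0(\overline I)$, the equation forces $|w_I'|^{p-2}w_I'\in C^1(\overline I)$, and inverting $s\mapsto|s|^{p-2}s$ yields $w_I'\in C^0$---is elementary and avoids the external citation. For the monotonicity in (1), the paper proceeds by a rearrangement: it builds $\widehat w_I(t)=\int_{-1}^t|w_I'|$, checks it is still a positive minimizer, and invokes uniqueness to conclude $\widehat w_I=w_I$. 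You instead deduce monotonicity from (4) once $w_I'(0)=0$ is known; both arguments work, and your ordering is arguably cleaner once (2) is already in hand. For (3), the paper's proof is a one-liner (``follows from the previous point and the symmetry'') and does not spell out the uniqueness of the mixed problem; your even-reflection argument, combined with Corollary~\ref{coro:unicasolpos}, makes this explicit. For (5), the paper reaches $\int_I|w_I|^q=\lambda_{p,q}(I)^{-q/(p-q)}$ via the identity $\min_{W^{1,p}_0(I)}\mathfrak{F}_{q,1}=\frac{q-p}{qp}\,\lambda_{p,q}(I)^{-q/(p-q)}$ (obtained by the homogenization $\psi\mapsto t\psi$), whereas you go through homogeneity of the Rayleigh quotient and identify $w_I$ as a scalar multiple of the $\lambda_{p,q}(I)$--minimizer by uniqueness; the two routes are equivalent and the scaling computation matches.
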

\begin{proof}
We proceed point by point.
\begin{enumerate}
\item The fact that $w_I$ is even follows from its uniqueness. Indeed, if this were not true, the new function $\widetilde w_I(t)=w_I(-t)$ would be another positive minimizer of $\mathfrak{F}_{q,1}$. As for the claimed monotonicity, we observe that the new function
\[
\widehat{w}_I(t)=\left\{\begin{array}{cc}
\displaystyle\int_{-1}^t |w_I'(\tau)|\,d\tau,& \mbox{ for } t\in(-1,0),\\
&\\
\displaystyle\int_{t}^1 |w_I'(\tau)|\,d\tau,& \mbox{ for } t\in(0,1),
\end{array}
\right.
\]
is still admissible, monotone on both subintervals and such that
\[
|\widehat{w}_I'(t)|=|w_I'(t)|\qquad \mbox{ and }\qquad \widehat{w}_I(t)\ge w_I(t),\qquad \mbox{ for a.\,e. }t\in I. 
\]
Thus $\widehat{w}_I$ is still a minimizer and thus, by uniqueness, it must coincide with $w_I$;
\vskip.2cm
\item by minimality, we know that $w_I$ is a weak solution of 
\[
\begin{cases}
-(|w_I'|^{p-2}\, w_I')'= w_I^{q-1} \quad \text{in } I, \\
w_I(-1)=w_I(1)=0. \\
\end{cases}
\]
By \cite[Theorem 3.1]{DM}, we know that such a problem admits a unique positive solution $u\in C^1(\overline{I})$ such that $|u'|^{p-2}\,u'\in C^1(\overline{I})$, as well. Such a solution is also a weak solution and thus, by uniqueness, it must coincide with $w_I$. This proves the claimed regularity properties of $w_I$;
\vskip.2cm
\item this simply follows from the previous point and the symmetry of $w_I$;
\vskip.2cm
\item since   $w_I>0$ and  $w_I' \ge 0$ in $(-1,0)$, by using  \eqref{eq:mixedproblem},  we get that  
\[
-((w_I')^{p-1})' = w_I^{q-1} > 0, \qquad \mbox{ in } (-1,0).
\]
This implies that $(w_I')^{p-1}$ is strictly decreasing and the same is true for $w_I'$, as well. In particular,  
\[
w_I'(t)>w_I'(0)=0,  \qquad \mbox{ for every } t\in (-1,0),
\]
as claimed;
\vskip.2cm
\item finally, in order to prove \eqref{eq:C}, we recall that
\[
\min_{\psi\in W^{1,p}_0(I)}\left\{\frac{1}{p}\,\int_I |\psi'|^p\,dt-\frac{1}{q}\,\int_I |\psi|^q\,dt\right\}=\frac{q-p}{q\,p}\,\left(\frac{1}{\lambda_{p,q}(I)}\right)^{\frac{q}{p-q}}.
\] 
This can be proved by a standard homogenization trick, replacing $\psi$ by $t\,\psi$ and then optimizing in $t>0$.
By recalling that from the optimality condition we have
\[
\int_I |w_I'|^p\,dt=\int_I |w_I|^q\,dt,
\]
and that it holds (simply by scaling)
\[
\lambda_{p,q}(I)=2^\frac{q-p}{q}\,\left(\frac{\pi_{p,q}}{2}\right)^p,
\]
we obtain 
\[
\left(\frac{1}{p}-\frac{1}{q}\right)\,\int_I |w_I|^q\,dt=2\,\frac{q-p}{q\,p}\,\left(\frac{2^p}{(\pi_{p,q})^p}\right)^{\frac{q}{p-q}}.
\]
Thus \eqref{eq:C} follows, by recalling that $w_I$ is even.
\end{enumerate}
This concludes the proof.
\end{proof}


\section{Applications to geometric estimates}
\label{sec:5}

\subsection{Solutions of the Lane-Emden equation}
 
The following expedient lemma will be useful. We point out that here the restriction on $q<p$ is not needed.

\begin{lemma}\label{lemma:3.1}
Let $1< p<\infty$ and $1\le q<\infty$. Let $f \in C^1([a,b])$ be a non-negative and non-decreasing function, such that 
\[
|f'|^{p-2} f'\in C^1([a,b]),
\]
and which satisfies
\[
-(|f'|^{p-2} f')'=C\, f^{q-1}, \qquad \text{in } [a,b], \\
\]
for some $C>0$.
Let $\Omega \subset \mathbb{R}^N$ be an open set and let $u \in W^{1,p}(\Omega) \cap L^{\infty}(\Omega)$ be a weakly $p-$superharmonic function.
Moreover we assume that $a \le u \le b$ almost everywhere in $\Omega$. Then the composition $\phi=f \circ u$ satisfies 
\[
-\Delta_{p} \phi \ge C\, |\nabla u|^p\, \phi^{q-1},\qquad \mbox{ in }\Omega,
\]
in weak sense. 
\end{lemma}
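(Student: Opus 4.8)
The plan is to derive the asserted differential inequality for $\phi=f\circ u$ by plugging a suitable test function into the definition of weak $p$-superharmonicity of $u$. First I would invoke the chain rule in Sobolev spaces (as in \cite[Theorem 6.16]{LL}): since $f\in C^1([a,b])$ and $u\in W^{1,p}(\Omega)\cap L^\infty(\Omega)$ with $a\le u\le b$ almost everywhere, we get $\phi\in W^{1,p}(\Omega)\cap L^\infty(\Omega)$ with $\nabla\phi=f'(u)\,\nabla u$; and since $f$ is non-decreasing, $f'\ge 0$, so that
\[
|\nabla\phi|^{p-2}\,\nabla\phi=\big(f'(u)\big)^{p-1}\,|\nabla u|^{p-2}\,\nabla u,\qquad\text{ a.\,e. in }\Omega.
\]
The decisive observation is that the function $g:=|f'|^{p-2}f'=(f')^{p-1}$ is, \emph{by hypothesis}, of class $C^1([a,b])$ and non-negative, and that the equation satisfied by $f$ is nothing but the identity $g'=-C\,f^{q-1}$ on $[a,b]$. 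Applying the chain rule once more, $g\circ u\in W^{1,p}(\Omega)\cap L^\infty(\Omega)$ with $\nabla(g\circ u)=g'(u)\,\nabla u=-C\,f(u)^{q-1}\,\nabla u=-C\,\phi^{q-1}\,\nabla u$.

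Next I would fix a non-negative $\psi\in C^\infty_0(\Omega)$ and take $\eta:=g(u)\,\psi$ as a test function in the inequality \eqref{eq:up-sup} defining weak $p$-superharmonicity. This is admissible: $\eta$ is non-negative, belongs to $W^{1,p}(\Omega)$ (being the product of the bounded Sobolev function $g(u)$ with $\psi\in C^\infty_0(\Omega)$) and has compact support in $\Omega$, hence it is the $W^{1,p}$-limit of a sequence of non-negative functions in $C^\infty_0(\Omega)$ (for instance its mollifications), and one passes to the limit in \eqref{eq:up-sup} using that $|\nabla u|^{p-2}\,\nabla u\in L^{p'}(\Omega;\mathbb{R}^N)$. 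By the Leibniz rule $\nabla\eta=g'(u)\,\psi\,\nabla u+g(u)\,\nabla\psi$, the inequality \eqref{eq:up-sup} then reads
\[
\int_\Omega g'(u)\,\psi\,|\nabla u|^p\,dx+\int_\Omega g(u)\,|\nabla u|^{p-2}\,\langle\nabla u,\nabla\psi\rangle\,dx\ \ge\ 0.
\]

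Finally I would substitute $g'(u)=-C\,\phi^{q-1}$ and $g(u)\,|\nabla u|^{p-2}\,\nabla u=|\nabla\phi|^{p-2}\,\nabla\phi$ into the last display, obtaining
\[
\int_\Omega\langle|\nabla\phi|^{p-2}\,\nabla\phi,\nabla\psi\rangle\,dx\ \ge\ C\int_\Omega|\nabla u|^p\,\phi^{q-1}\,\psi\,dx,\qquad\text{ for every }\psi\in C^\infty_0(\Omega),\ \psi\ge 0,
\]
which is precisely the weak formulation of $-\Delta_p\phi\ge C\,|\nabla u|^p\,\phi^{q-1}$ in $\Omega$; in the limit case $q=1$ one uses the convention $\phi^{q-1}=f(u)^{q-1}=1$ and $g'=-C$, and the same computation applies verbatim. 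I expect the only point requiring genuine care to be the verification that $\eta=g(u)\,\psi$ is an admissible test function in \eqref{eq:up-sup}; everything else is a direct computation, once the rewriting $g'=-C\,f^{q-1}$ of the ODE has been noticed.
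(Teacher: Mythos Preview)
Your proof is correct and essentially identical to the paper's own argument: both insert the test function $(|f'(u)|^{p-2}f'(u))\cdot\psi$ (your $g(u)\psi$) into the weak $p$-superharmonicity of $u$, use the chain rule and the ODE $g'=-C f^{q-1}$, and read off the desired inequality. The only differences are cosmetic---your explicit naming of $g$ and your slightly more detailed justification of why the test function is admissible.
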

\begin{proof}
We insert in \eqref{eq:up-sup} the test function 
\[
\psi=(|f'(u)|^{p-2} f'(u))\, \eta,
\] 
with $\eta \in C^{\infty}_0(\Omega)$ a non-negative function. Thanks to the assumptions on $f$ and $u$, the Chain Rule formula ensures that $\psi \in W_0^{1,p}(\Omega) \cap L^{\infty}(\Omega)$ and 
\[ 
\nabla \psi =  (|f'|^{p-2}\, f')'(u) \, \nabla u \, \eta + |f'(u)|^{p-2}\, f'(u) \, \nabla \eta. 
\]
We thus get
\[
\begin{split}
0 &\le - C\, \int_{\Omega} |\nabla u|^p\, f(u)^{q-1} \eta \, dx + \int_\Omega |f'(u)|^{p-2}\,f'(u)\, \langle  |\nabla u|^{p-2}\, \nabla u, \nabla \eta \rangle \, dx,
\end{split}	
\]
where we also used the equation satisfied by $f$. This gives in particular that
\[ 
\int_{\Omega} \langle |\nabla f(u)|^{p-2} \,\nabla f(u), \nabla \eta \rangle \, dx \ge C\, \int_\Omega |\nabla u|^p f(u)^{q-1}  \eta \, dx.
\]
By recalling the definition $\phi=f \circ u$, we can conclude.
\end{proof}

We will use the previous result to construct a special supersolution to the Lane-Emden equation with some geometric contents, in open convex sets. 
\par
We recall that we denote by $r_\Omega$ the inradius of a set, defined by \eqref{inradius}. We also recall that we indicate by $I=(-1,1)$.
We then have the following
\begin{thm}[Double-sided pointwise estimate] 
\label{thm:bounds}
Let $\alpha>0$ and $1\le q<p<\infty$. Let $\Omega \subset \mathbb{R}^N$ be an open connected set, which is $q-$admissible.
 Then if $B_r(x_0)\subset\Omega$, it holds
\begin{equation}
\label{eq:pointestimate}
w_{B_1,\alpha}\left( \frac{x-x_0}{r} \right) \le r^{-\frac{p}{p-q}}\, \,w_{\Omega,\alpha}(x),\qquad \mbox{ for a.\,e. }x\in\Omega,
\end{equation}
where the function on the left-hand side is extended by zero to the whole $\Omega$.
Moreover, if $\Omega$ is bounded and convex, it also holds
\begin{equation}
\label{eq:pointestimate2}
r_\Omega^{-\frac{p}{p-q}}\, \,w_{\Omega,\alpha}(x) \le w_{I,\alpha}\left(\frac{d_{\Omega}(x)}{r_\Omega}-1\right),\qquad \mbox{ for a.\,e. }x\in\Omega.
\end{equation}
Finally, both estimates are sharp.
\end{thm}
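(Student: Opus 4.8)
# Proof Proposal for Theorem \ref{thm:bounds}

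\textbf{Overall strategy.} The two estimates \eqref{eq:pointestimate} and \eqref{eq:pointestimate2} are both instances of the comparison principle (Theorem \ref{thm:comparison}), applied after constructing an appropriate sub- or supersolution with explicit geometric content. Throughout, by the scaling relation \eqref{walfa} it suffices to treat $\alpha=1$, writing $w_\Omega$, $w_{B_1}$, $w_I$.

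\textbf{Lower bound \eqref{eq:pointestimate}.} The plan is to use $w_{B_1}$, rescaled, as a subsolution in $\Omega$. Given $B_r(x_0)\subset\Omega$, by Remark \ref{rem:scalings} the function
\[
v(x):=r^{\frac{p}{p-q}}\,w_{B_1}\!\left(\frac{x-x_0}{r}\right)
\]
is the positive solution of \eqref{LE} in $B_r(x_0)$ with homogeneous Dirichlet data. Extending $v$ by zero to all of $\Omega$, one checks that $v$ is a weak \emph{subsolution} in $\Omega$: the extension by zero does not increase the $p$-Laplacian in the distributional sense (the normal derivative on $\partial B_r(x_0)$ points inward, so gluing with $0$ outside only adds a non-positive measure to $-\Delta_p v$), while the right-hand side $|v|^{q-2}v$ vanishes outside $B_r(x_0)$ where $v$ does. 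Since $v=0$ on $\partial\Omega$ we have $(v-w_\Omega)_+\in X^{q,p}_0(\Omega)$, and Theorem \ref{thm:comparison} gives $v\le w_\Omega$ a.e., which is exactly \eqref{eq:pointestimate} after dividing by $r^{p/(p-q)}$.

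\textbf{Upper bound \eqref{eq:pointestimate2} — the main point.} Here I would build a \emph{supersolution} from the one-dimensional profile $w_I$ composed with the distance function. Set $f(s):=w_{(-1,0)}$-type profile; more precisely, using Lemma \ref{lem:1d}, let $f\colon[-1,0]\to[0,\infty)$ be given by $f(s)=w_I(s)$ restricted to $[-1,0]$ (even extension forgotten), which by parts (2)–(4) of Lemma \ref{lem:1d} is non-negative, non-decreasing, $C^1$ up to the endpoints with $|f'|^{p-2}f'\in C^1$, and solves $-(|f'|^{p-2}f')'=f^{q-1}$ on $(-1,0)$ with $f(-1)=0$, $f'(0)=0$. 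Now I would invoke the key fact, valid for bounded convex $\Omega$, that $u:=d_\Omega/r_\Omega - 1$ takes values in $[-1,0]$ and that $d_\Omega$ is weakly \emph{$p$-superharmonic} on a convex set (indeed $-\Delta_p d_\Omega\ge 0$ distributionally, since $d_\Omega$ is concave on convex sets and $|\nabla d_\Omega|=1$ a.e.); hence $u$ is weakly $p$-superharmonic with $|\nabla u|=1/r_\Omega$. Applying Lemma \ref{lemma:3.1} with $C=1$ and this $u$, the composition
\[
\phi(x):=f\!\left(\frac{d_\Omega(x)}{r_\Omega}-1\right)
\]
satisfies, in the weak sense,
\[
-\Delta_p\phi\ \ge\ |\nabla u|^p\,\phi^{q-1}\ =\ \frac{1}{r_\Omega^{p}}\,\phi^{q-1}.
\]
Rescaling via Remark \ref{rem:scalings}, the function $r_\Omega^{p/(p-q)}\phi$ is then a supersolution of \eqref{LE} with $\alpha=1$. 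It vanishes on $\partial\Omega$ (where $d_\Omega=0$, so the argument is $-1$ and $f(-1)=0$), while $w_\Omega$ also vanishes there, so $(w_\Omega-r_\Omega^{p/(p-q)}\phi)_+\in W^{1,p}_0(\Omega)$. Theorem \ref{thm:comparison} then yields $w_\Omega\le r_\Omega^{p/(p-q)}\phi$ a.e., i.e. \eqref{eq:pointestimate2} once one notes $w_I(d_\Omega/r_\Omega-1)=f(d_\Omega/r_\Omega-1)$ on the relevant range (using that $w_I$ is even and equals $f$ on $[-1,0]$, and $d_\Omega/r_\Omega-1\in[-1,0]$).

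\textbf{Expected obstacle.} The delicate point is justifying that $d_\Omega$ is weakly $p$-superharmonic on a bounded convex set and that Lemma \ref{lemma:3.1} applies — one must confirm $d_\Omega\in W^{1,p}(\Omega)\cap L^\infty(\Omega)$ (immediate: $d_\Omega$ is Lipschitz with constant $1$ and $\Omega$ bounded) and that the concavity of $d_\Omega$ on convex sets gives $\mathrm{div}(|\nabla d_\Omega|^{p-2}\nabla d_\Omega)\le 0$ distributionally. This is classical but should be stated carefully, perhaps via the fact that $d_\Omega$ is a viscosity supersolution of $-\Delta_p d_\Omega=0$ on convex $\Omega$ together with the equivalence of weak and viscosity supersolutions for the $p$-Laplacian. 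A secondary technical check is the gluing-by-zero argument in the lower bound, which is standard but deserves one line.

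\textbf{Sharpness.} For \eqref{eq:pointestimate}, equality holds when $\Omega=B_r(x_0)$, by uniqueness of the positive solution (Theorem \ref{teo:uniquemin}). For \eqref{eq:pointestimate2}, sharpness is obtained by taking the slab-like sets $\Omega_L=(-L/2,L/2)\times(-1,1)$ and letting $L\to+\infty$: one shows $w_{\Omega_L}$ converges (locally uniformly, after the natural normalization) to $w_I$ depending only on the second variable, so that the inequality \eqref{eq:pointestimate2} becomes asymptotically an equality along the central slice. This is the content of Appendix \ref{app:C}, which I would simply cite here.
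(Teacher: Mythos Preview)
Your upper bound argument and sharpness discussion match the paper's proof essentially line for line. One small simplification: you do not need to pass through viscosity solutions to see that $d_\Omega$ is weakly $p$-superharmonic on a convex set. Since $d_\Omega$ is concave it is weakly superharmonic (the paper cites \cite{AU}), and because $|\nabla d_\Omega|=1$ a.e.\ one has $|\nabla d_\Omega|^{p-2}\nabla d_\Omega=\nabla d_\Omega$, so weak $p$-superharmonicity for every $p$ is immediate.

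Your lower bound argument, however, has a genuine gap and differs from the paper. You extend $v=r^{p/(p-q)}w_{B_1}((\cdot-x_0)/r)$ by zero to all of $\Omega$ and then try to apply Theorem~\ref{thm:comparison} on $\Omega$. Two problems arise. First, the gluing-by-zero step (``the normal derivative points inward\dots'') implicitly uses $C^1$ regularity up to $\partial B_r(x_0)$ and a Hopf-type sign on $\partial_\nu v$; this can be justified for the ball but is precisely the kind of regularity argument the paper avoids. Second, and more seriously, the extended $v$ vanishes on $\Omega\setminus B_r(x_0)$ and hence is \emph{not positive} on $\Omega$, while Theorem~\ref{thm:comparison} (and Proposition~\ref{prop:solmin} underlying it) is stated for \emph{positive} subsolutions --- the positivity is genuinely used, since the proof produces a factor $v^{1-q}$ via the curve $\sigma^t$.

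The paper sidesteps both issues by applying the comparison principle on the \emph{ball} $B_r(x_0)$ rather than on $\Omega$: there $w_{B_r(x_0)}$ and (the restriction of) $w_\Omega$ are both positive solutions, and Lemma~\ref{lem:Upos}(ii) gives $(w_{B_r(x_0)}-w_\Omega)_+\in W^{1,p}_0(B_r(x_0))$ since $w_{B_r(x_0)}\in W^{1,p}_0(B_r(x_0))$ and $w_\Omega\ge 0$. Theorem~\ref{thm:comparison} on $B_r(x_0)$ then yields $w_{B_r(x_0)}\le w_\Omega$ there, and the inequality extends trivially to $\Omega$ since the left-hand side is zero outside the ball. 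This is cleaner and requires no regularity or extension arguments.
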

\begin{proof}
We prove separately the lower and upper bounds. In both cases, we heavily rely on the comparison principle of Theorem \ref{thm:comparison}. By recalling \eqref{walfa}, it is sufficient to prove the result for $\alpha=1$.
\vskip.2cm\noindent
{\bf Lower bound.} 
Let $w_{B_{r}(x_0)}$ be the positive solution of \eqref{LE} in $B_{r}(x_0)\subset \Omega$, with $\alpha=1$. Then, by Remark \ref{rem:scalings} and the uniqueness of the positive solution, we know that
\[
w_{B_{r}(x_0)}(x) =  r^{\frac{p}{p-q}} \,w_{B_1}\left( \frac{x-x_0}{r} \right).
\]
Since $w_{B_{r}(x_0)}\in W^{1,p}_0(B_{r}(x_0))$ and $w_\Omega\ge 0$ on $B_{r}(x_0)$, by Lemma \ref{lem:Upos} part $(ii)$,
 we have that 
 \[
 (w_{B_{r}(x_0)}-w_\Omega)_+\in X^{q,p}_0(B_{r}(x_0))=W^{1,p}_0(B_{r}(x_0)).
 \] 
 Moreover  both $w_{B_{r}(x_0)}$ and $w_\Omega$ are positive solutions to \eqref{LE} in $B_r(x_0)$.
Hence, thanks to Theorem \ref{thm:comparison}, we obtain 
\begin{equation}
\label{eq:lowerboundB}
 r^{\frac{p}{p-q}}\, w_{B_1}\left( \frac{x-x_0}{r} \right)=w_{B_{r}(x_0)}(x) \le w_\Omega (x), \qquad \text{ for a.\,e. }x \in B_r(x_0).
\end{equation} 
Moreover, we can extend $w_{B_{r}(x_0)}$ to the whole $\Omega$ by setting it to be zero in $\Omega \setminus B_{r}(x_0)$. Then \eqref{eq:lowerboundB} holds almost everywhere in $\Omega$.
\vskip.2cm\noindent
{\bf Upper bound.} We define 
\[
u=\frac{d_{\Omega}}{r_\Omega}-1 \in W^{1,p}(\Omega) \cap L^{\infty}(\Omega),
\]
and observe that this is a weakly $p-$superharmonic function. Indeed, since $\Omega$ is convex, the distance function $d_\Omega$ is concave and thus weakly superharmonic (see \cite{AU}). By further observing that $|\nabla d_\Omega|=1$ almost everywhere in $\Omega$, we get that it is actually weakly $p-$superharmonic, for every $1<p<\infty$.
Also observe that by construction, we have $-1 \le u \le 0$.
\par
If we consider the composition $\phi=w_I \circ u$, in light of Lemma \ref{lemma:3.1} and of the properties of $w_I$ contained in Lemma \ref{lem:1d}, we know that $\phi \in W^{1,p}_0(\Omega)$ is a weak positive solution of
\[ 
-\Delta_{p} \phi \ge \frac{1}{r_\Omega^p} \,\phi^{q-1} \quad \text{ in } \Omega. 
\]
By recalling Remark \ref{rem:scalings}, if we define
\[
\widetilde\phi=r_\Omega^\frac{p}{p-q}\,\phi,
\]
then this satisfies
\[
-\Delta_p \widetilde\phi \ge \widetilde\phi^{q-1}, \qquad \mbox{ in } \Omega.
\]
Moreover, we have that both $w_\Omega$ and $\widetilde\phi$ belongs to $W^{1,p}_0(\Omega)$. Thus $(w_\Omega-\widetilde\phi)_+ \in W^{1,p}_0(\Omega)$ and by the comparison principle it holds
\[
w_\Omega \le \widetilde\phi = r_\Omega^{\frac{p}{p-q}}\, w_I\left( \frac{d_{\Omega}}{r_\Omega} - 1 \right), \qquad \text{ a.\,e. in } \Omega,
\]
as desired. 
\vskip.2cm\noindent
{\bf Sharpness.} It is straightfoward to see that the lower bound in \eqref{eq:pointestimate} is sharp. It is sufficient to take $\Omega$ to be any $N-$dimensional open ball and $B_r(x_0)=\Omega$, to get equality in the lower bound.
\par
The upper bound \eqref{eq:pointestimate2} is slightly more complicate: indeed, the function 
\[
w_I\left(\frac{d_{\Omega}(x)}{r_\Omega}-1\right),
\]
``virtually'' coincides with the function $w_{\Omega}$ for the slab $\Omega=\mathbb{R}^{N-1}\times I$. 
However, this choice is not feasible, since $w_\Omega$ is not well-defined in our framework. Indeed, the set $\Omega=\mathbb{R}^{N-1}\times I$ is not $q-$admissible for any $1\le q<p$ and the minimization problem in Definition \ref{defn:w} is not well-posed.
\par
 We go through an approximation argument. For every $n\in\mathbb{N}\setminus\{0\}$, we take
\[
\Omega_n=\left(-\frac{n}{2},\frac{n}{2}\right)^{N-1}\times I,
\]
then from Lemma \ref{lm:basta!} we have that
\begin{equation}
\label{bon}
\lim_{n\to\infty} w_{\Omega_n}(x',x_N)=w_I(x_N),\qquad \mbox{ for a.\,e. } (x',x_N)\in\mathbb{R}^{N-1}\times I.
\end{equation}
On the other hand, by using that $r_{\Omega_n}=1$ for $n\ge 2$ and that 
\[
\lim_{n\to\infty}d_{\Omega_n}(x',x_N)=1-|x_N|,\qquad \mbox{ for } (x',x_N)\in\mathbb{R}^{N-1}\times I
\]
we obtain that
\begin{equation}
\label{bonbon}
\lim_{n\to\infty}
w_I\left(\frac{d_{\Omega_n}(x)}{r_{\Omega_n}}-1\right)=w_I(-|x_N|)=w_I(x_N).
\end{equation}
By comparing \eqref{bon} and \eqref{bonbon}, we get the claimed sharpness.
\end{proof}
As a straightforward application of Theorem \ref{thm:bounds}, we get the following
\begin{cor}[Double-sided $L^\infty$ estimate]
\label{cor:Linfty}
Let $\alpha>0$ and $1\le q<p<\infty$. Let $\Omega \subset \mathbb{R}^N$ be an open bounded convex set. Then we have
\begin{equation}\label{eq:inftyestimate}
w_{B_1,\alpha}(0) \le r_{\Omega}^{-\frac{p}{p-q}} \,\|w_{\Omega,\alpha}\|_{L^{\infty}(\Omega)} \le \left(\alpha\,\left(\frac{2}{\pi_{p,q}}\right)^p\right)^\frac{1}{p-q}\,\left(\frac{q\,p-q+p}{p}\right)^\frac{1}{q},
\end{equation}
and both estimates are sharp.
\end{cor}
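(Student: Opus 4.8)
The plan is to read off both inequalities in \eqref{eq:inftyestimate} directly from the pointwise bounds of Theorem \ref{thm:bounds}, and then to compute the one-dimensional quantity $w_{I,\alpha}(0)$ that appears on the right. By \eqref{walfa} it suffices to treat the case $\alpha=1$ and restore the general $\alpha$ at the end. Observe first that an open bounded convex set is connected and has finite measure, hence is $q$-admissible, so both Theorem \ref{thm:bounds} and Lemma \ref{lem:1d} apply.

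\textbf{The two inequalities.} For the left one, I would use that a bounded convex set realizes its inradius, so there is $x_0\in\Omega$ with $B_{r_\Omega}(x_0)\subset\Omega$. Plugging $r=r_\Omega$ into \eqref{eq:pointestimate} and taking the essential supremum over $x\in B_{r_\Omega}(x_0)$, equivalently over $y=(x-x_0)/r_\Omega\in B_1$, gives $\|w_{B_1}\|_{L^\infty(B_1)}\le r_\Omega^{-p/(p-q)}\,\|w_\Omega\|_{L^\infty(\Omega)}$; since $w_{B_1}$ is radially symmetric and radially non-increasing (from its radial ODE, by the argument used for Lemma \ref{lem:1d}(1)), one has $\|w_{B_1}\|_{L^\infty(B_1)}=w_{B_1}(0)$. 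For the right one, note that $0\le d_\Omega\le r_\Omega$ forces $d_\Omega(x)/r_\Omega-1\in[-1,0]$, and $w_I$ is increasing on $(-1,0)$ by Lemma \ref{lem:1d}(1); hence \eqref{eq:pointestimate2} gives $r_\Omega^{-p/(p-q)}\,w_\Omega(x)\le w_I(0)$ for a.e.\ $x\in\Omega$, and taking the essential supremum gives $r_\Omega^{-p/(p-q)}\,\|w_\Omega\|_{L^\infty(\Omega)}\le w_I(0)$.

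\textbf{The one-dimensional constant.} This is the only real computation. By Lemma \ref{lem:1d}, on $(-1,0)$ the function $w_I$ is $C^1$, positive, increasing, satisfies $w_I'(0)=0$, and solves $-((w_I')^{p-1})'=w_I^{q-1}$. Multiplying this equation by $w_I'$ and recognizing each side as a total derivative yields the first integral
\[
\frac{p-1}{p}\,(w_I')^p+\frac{1}{q}\,w_I^q\equiv\frac{1}{q}\,w_I(0)^q\qquad\text{on }(-1,0),
\]
the value $\frac{1}{q}\,w_I(0)^q$ of the constant being found by letting $t\to 0^-$. Integrating over $(-1,0)$ and using the symmetry of $w_I$ together with the optimality relation $\int_I|w_I'|^p\,dt=\int_I|w_I|^q\,dt$ — which yields $\int_{-1}^0(w_I')^p\,dt=\int_{-1}^0 w_I^q\,dt$ — one obtains
\[
w_I(0)^q=\frac{q\,p-q+p}{p}\int_{-1}^0 w_I^q\,dt.
\]
Inserting \eqref{eq:C} gives $w_I(0)=\left(\frac{q\,p-q+p}{p}\right)^{1/q}\left(\frac{2}{\pi_{p,q}}\right)^{p/(p-q)}$, and multiplying by $\alpha^{1/(p-q)}$ identifies $w_{I,\alpha}(0)$ with the rightmost expression in \eqref{eq:inftyestimate}. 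The case $q=1$ needs no change, using the convention $w_I^{q-1}=1$ so that $\frac{1}{q}w_I^q$ reads $w_I$.

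\textbf{Sharpness.} The left inequality is an equality when $\Omega=B_1$: then $r_\Omega=1$, $x_0=0$, and $\|w_{B_1,\alpha}\|_{L^\infty}=w_{B_1,\alpha}(0)$. For the right one, I would reuse the slab approximation already used in Theorem \ref{thm:bounds}: for $\Omega_n=(-n/2,n/2)^{N-1}\times I$ one has $r_{\Omega_n}=1$ for $n\ge 2$, while $w_{\Omega_n}\le w_I(0)$ on $\Omega_n$ by \eqref{eq:pointestimate2} and $w_{\Omega_n}\to w_I$ a.e.\ by \eqref{bon}; the latter forces $\|w_I\|_{L^\infty}\le\liminf_n\|w_{\Omega_n}\|_{L^\infty}$, whence $r_{\Omega_n}^{-p/(p-q)}\|w_{\Omega_n}\|_{L^\infty}\to w_I(0)$ and the constant on the right of \eqref{eq:inftyestimate} cannot be improved (the general $\alpha$ is restored by scaling). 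I expect the first-integral computation for $w_I(0)$ to be the only delicate point; after invoking \eqref{eq:C} it collapses to elementary algebra, and everything else is a matter of evaluating the pointwise estimates of Theorem \ref{thm:bounds} at the appropriate points.
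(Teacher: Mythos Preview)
Your proposal is correct and follows essentially the same route as the paper: both inequalities are read off from Theorem \ref{thm:bounds}, and $w_I(0)$ is computed via the first integral of the one-dimensional equation combined with \eqref{eq:C}. The only cosmetic differences are that the paper justifies the radial monotonicity of $w_{B_1}$ via the P\'olya--Szeg\H{o} principle rather than the radial ODE, and that the paper dispatches sharpness in one line by pointing back to the sharpness of Theorem \ref{thm:bounds}, whereas you spell out the slab-approximation argument explicitly.
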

\begin{proof}
By still recalling \eqref{walfa}, we can take $\alpha=1$, without loss of generality. Let $B_r(x_0)\subset \Omega$, thus in particular $r\le r_\Omega$, by definition of inradius. By passing to the supremum in \eqref{eq:pointestimate}, we obtain
\[
w_{B_1}(0)=\left\|w_{B_1}\left( \frac{\cdot-x_0}{r} \right)\right\|_{L^\infty(\Omega)}\le r^{-\frac{p}{p-q}} \,\|w_{\Omega,\alpha}\|_{L^{\infty}(\Omega)}.
\]
The first equality follows from the fact that $w_{B_1}$ is a radially symmetric decreasing function.
This well-known property of $w_{B_1}$ can be proved by applying a radially symmetric decreasing rearrangement and then appealing to the so-called {\it P\'olya-Szeg\H{o} principle}, see for example \cite[Theorem 3]{Ka}. By arbitrariness of the radius $r$ in the estimate above, we get the lower bound in \eqref{eq:inftyestimate}.
\par 
As for the upper bound in \eqref{eq:inftyestimate}, we use that $w_I$ is increasing on $(-1,0)$, thus to finish we just need to prove that 
\begin{equation}
\label{volio}
w_I(0)=\left(\frac{q\,p-q+p}{p}\right)^\frac{1}{q}\,\left(\frac{2}{\pi_{p,q}}\right)^\frac{p}{p-q}.
\end{equation}
We now use the identity \eqref{eq:C} and the equation \eqref{eq:mixedproblem} solved by $w_I$, in order to determine $w_I(0)$.
Since $w_I'$ does not vanish in $(-1,0)$, by multiplying equation \eqref{eq:mixedproblem} by $w_I'$, we get  
\[ 
-(|w_I'|^{p-2}\, w_I')' \, w_I'=w_I^{q-1} \, w_I', 
\] 
which can be rewritten as
\[ 
-(p-1)\, \frac{d}{dt} \frac{|w_I'|^p}{p} =\frac{d}{dt} \frac{w_I^q}{q}. 
\]
Upon integrating this on $[t,0]$ and using that $w_I'(0)=0$ , we obtain
$$
(p-1)\,\left(  \frac{|w_I'(t)|^p}{p} \right) = \frac{(w_I(0))^q}{q} -\frac{(w_I(t))^q}{q} .
$$
which implies
\begin{equation}\label{eq:E}
(w_I'(t))^p =  \frac{p}{q\,(p-1)} ( (w_I(0))^q-(w_I(t))^q ). 
\end{equation}
Finally, by using again the equation, the evenness of $w_I$ and \eqref{eq:E}, we obtain
\[
\int_{-1}^{0} |w_I(t)|^q \, dt = \int_{-1}^{0} |w_I'(t)|^p \, dt = \frac{p}{q\,(p-1)}\, (w_I(0))^q - \frac{p}{q\,(p-1)}\, \int_{-1}^{0} |w_I(t)|^q \, dt,
\]
hence 
\[ 
w_I(0)=\left( \frac{q\,p-q+p}{p} \right)^\frac{1}{q}\, \left( \int_{-1}^{0}|w_I(t)|^q\,dt \right)^\frac{1}{q}. 
\]
From this and \eqref{eq:C}, we finally get \eqref{volio}. 
\par The sharpness of our $L^\infty$ estimate is now a straightforward consequence of the sharpness of \eqref{eq:pointestimate}.
\end{proof}

\begin{rem}[Universal $L^\infty$ estimate]
\label{rem:universal}
By combining Corollaries \ref{cor:hierarchy} and \ref{cor:Linfty}, we get in particular that in an open bounded convex set $\Omega\subset\mathbb{R}^N$ we have
\[
r_{\Omega}^{-\frac{p}{p-q}} \,\|v\|_{L^{\infty}(\Omega)} \le \left(\alpha\,\left(\frac{2}{\pi_{p,q}}\right)^p\right)^\frac{1}{p-q}\,\left(\frac{q\,p-q+p}{p}\right)^\frac{1}{q},
\]
for {\it every} solution $v\in W^{1,p}_0(\Omega)$ of \eqref{LE}. 
\end{rem}

\subsection{Localization of maximum points}
Theorem \ref{thm:bounds} gives quite a precise description of $w_{\Omega,\alpha}$, in terms of geometric quantities. It is thus possible to use this description to give a simple localization estimate for the maximum points of $w_{\Omega,\alpha}$. This is the content of the following

\begin{cor}
\label{cor:localization}
Let $\alpha>0$  and $1\le q<p<\infty$. Let $\Omega \subset \mathbb{R}^N$ be an open bounded convex set. For every maximum point $x_0\in \Omega$ of $w_{\Omega,\alpha}$, we have
\[
d_\Omega(x_0)\ge C_{N,p,q}\,r_\Omega,
\] 
where the constant $0<C_{N,p,q}<1$ is defined by 
\[
C_{N,p,q}:=\left(\frac{q\,(p-1)}{p}\right)^\frac{1}{p}\,\int_0^{w_{B_1}(0)} \frac{1}{\left((w_I(0))^q-\tau^q\right)^\frac{1}{p}}\,d\tau,
\]
and $w_I(0)$ has been evaluated in \eqref{volio}.
\end{cor}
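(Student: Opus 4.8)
The plan is to read off the estimate directly from the two pointwise bounds of Theorem~\ref{thm:bounds} and then to make the resulting implicit inequality explicit via the first-order ODE satisfied by $w_I$. Thanks to the scaling relation \eqref{walfa} it suffices to treat $\alpha=1$, so I write $w_\Omega$, $w_{B_1}$, $w_I$; note also that $w_\Omega$ is continuous (by standard regularity theory), so the a.e.\ bounds of Theorem~\ref{thm:bounds} hold everywhere and $\|w_\Omega\|_{L^\infty(\Omega)}=w_\Omega(x_0)$.

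First I would fix an inball $B_{r_\Omega}(y_0)\subset\Omega$, which exists since $\Omega$ is bounded and convex (the supremum in \eqref{inradius} is attained). Evaluating the lower bound \eqref{eq:pointestimate} at $x=y_0$ and using that $w_{B_1}$ is radially decreasing (so it attains its maximum at the centre) gives
\[
w_{B_1}(0)\le r_\Omega^{-\frac{p}{p-q}}\,w_\Omega(y_0)\le r_\Omega^{-\frac{p}{p-q}}\,\|w_\Omega\|_{L^\infty(\Omega)}=r_\Omega^{-\frac{p}{p-q}}\,w_\Omega(x_0),
\]
while evaluating the upper bound \eqref{eq:pointestimate2} at $x=x_0$ gives
\[
r_\Omega^{-\frac{p}{p-q}}\,w_\Omega(x_0)\le w_I\!\left(\frac{d_\Omega(x_0)}{r_\Omega}-1\right).
\]
Chaining the two yields $w_{B_1}(0)\le w_I\big(d_\Omega(x_0)/r_\Omega-1\big)$.

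Next I would invert $w_I$. Set $t_0:=d_\Omega(x_0)/r_\Omega-1$; since $0\le d_\Omega\le r_\Omega$ we have $t_0\in[-1,0]$, and by Lemma~\ref{lem:1d} the function $w_I$ is continuous and strictly increasing on $[-1,0]$, from $w_I(-1)=0$ to $w_I(0)$. Applying Theorem~\ref{thm:bounds} to the ball $\Omega=B_1$ itself (for which $r_{B_1}=1$ and $d_{B_1}(x)=1-|x|$) gives $w_{B_1}(0)\le w_I(0)$, so $w_{B_1}(0)$ lies in the range of $w_I|_{[-1,0]}$; monotonicity then forces $t_0\ge(w_I|_{[-1,0]})^{-1}(w_{B_1}(0))$, that is
\[
\frac{d_\Omega(x_0)}{r_\Omega}\ge 1+\big(w_I|_{[-1,0]}\big)^{-1}\!\big(w_{B_1}(0)\big).
\]
Finally I would identify this right-hand side with $C_{N,p,q}$ using the ODE: by \eqref{eq:E} and $w_I'>0$ on $(-1,0)$ we have $w_I'(t)=(p/(q(p-1)))^{1/p}\,((w_I(0))^q-w_I(t)^q)^{1/p}$, and separating variables with the substitution $\tau=w_I(t)$ (using $w_I(-1)=0$) gives, for every $s\in(-1,0]$,
\[
s+1=\left(\frac{q\,(p-1)}{p}\right)^{1/p}\int_0^{w_I(s)}\frac{d\tau}{\big((w_I(0))^q-\tau^q\big)^{1/p}}.
\]
Taking $s=(w_I|_{[-1,0]})^{-1}(w_{B_1}(0))$ (so $w_I(s)=w_{B_1}(0)$, and recalling that $w_I(0)$ is the value \eqref{volio}) gives exactly $s+1=C_{N,p,q}$, whence $d_\Omega(x_0)\ge C_{N,p,q}\,r_\Omega$; the integral is finite since near $\tau=w_I(0)$ one has $(w_I(0))^q-\tau^q\sim q(w_I(0))^{q-1}(w_I(0)-\tau)$ with $1/p<1$, and $C_{N,p,q}>0$ trivially.

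The argument is essentially a bookkeeping exercise once Theorem~\ref{thm:bounds} is available; the only point requiring a genuine (though short) argument is the inequality $w_{B_1}(0)\le w_I(0)$, equivalently $C_{N,p,q}\le1$, which makes the inversion of $w_I$ legitimate. It is obtained by the comparison just described applied to $\Omega=B_1$, or, alternatively, by the slab approximation $\Omega_n=(-n/2,n/2)^{N-1}\times I$ used for the sharpness in Theorem~\ref{thm:bounds}; for $N\ge2$ this inequality is strict, so that $C_{N,p,q}<1$.
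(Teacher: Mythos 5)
Your proof is correct and follows essentially the same route as the paper: chain the pointwise lower bound at the inball centre with the pointwise upper bound at the maximum point to get $w_{B_1}(0)\le w_I(d_\Omega(x_0)/r_\Omega-1)$, then invert $w_I$ on $(-1,0]$ and rewrite the inverse as an integral via the first-order identity \eqref{eq:E}. The only additions you make (the separate verification that $w_{B_1}(0)\le w_I(0)$, and the remark on integrability near $\tau=w_I(0)$) are harmless extra care: the first already follows from the chained inequality, since $w_I\le w_I(0)$ throughout.
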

\begin{proof}
Again by \eqref{walfa}, we see that the location of maximum points is independent of $\alpha>0$. 
Thus we can take $\alpha=1$. We also observe that by standard results from Elliptic Regularity, we know that $w_\Omega$ is continuous on $\overline\Omega$ (see for example \cite[Theorem 7.8]{Gi}). Thus, it does admit maximum points on $\Omega$. If $x_0\in \Omega$ is such a maximum point, we get from \eqref{eq:pointestimate} and \eqref{eq:pointestimate2}
\[
w_{B_1}(0) \le r_\Omega^{-\frac{p}{p-q}}\, \,w_{\Omega}(x_0) \le w_{I}\left(\frac{d_{\Omega}(x_0)}{r_\Omega}-1\right).
\]
This in particular entails that
\begin{equation}
\label{estate}
w_{B_1}(0)\le w_{I}\left(\frac{d_{\Omega}(x_0)}{r_\Omega}-1\right).
\end{equation}
We recall that $d_\Omega/r_\Omega -1\le 0$ and observe that 
\[
w_I:(-1,0]\to (0,w_I(0)] 
\]
is increasing, thus invertible. 
By taking the inverse function of $w_I$, we get
\begin{equation}
\label{hotspot}
w_{I}^{-1}(w_{B_1}(0))+1\le \frac{d_{\Omega}(x_0)}{r_\Omega}.
\end{equation}
We now observe that for every $y\in (0,w_I(0)]$ we can write
\[
w_I^{-1}(y)=w_I^{-1}(0)+\int_0^y \frac{1}{w_I'(w_I^{-1}(\tau))}\,d\tau=-1+\int_0^y \frac{1}{w_I'(w_I^{-1}(\tau))}\,d\tau.
\]
We can use \eqref{eq:E} to evalute the derivative inside the integral. This gives
\[
w_I^{-1}(y)+1=\left(\frac{q\,(p-1)}{p}\right)^\frac{1}{p}\,\int_0^y \frac{1}{\left(w_I(0)^q-\tau^q\right)^\frac{1}{p}}\,d\tau.
\]
By recalling the definition of $C_{N,p,q}$, from \eqref{hotspot} we get the desired conclusion.
\end{proof}
\begin{rem}
A result similar to the previous one has been obtained by Magnanini and Poggesi in \cite{MP}. We refer in particular to \cite[Remark 4.8]{MP}, even if the estimate is not explicitly stated for our equation. Their proof is different from ours, it is based on obtaining a refined gradient bound for the solution $w_\Omega$. Their method of proof is inspired by the so-called {\it $P-$functions method}, introduced by Payne in \cite{Pa}.  
\par
For the case $q=1$, by using the explicit expression of $w_{B_1(0)}$ and $w_I$ (see Remark \ref{rem:case1}), directly from \eqref{estate} it is not difficult to get the expression 
\[
C_{N,p,1}=1-(1-N^{-\frac{1}{p-1}})^\frac{p-1}{p}.
\]
This is worse than the constant $N^{-1/p}$ obtained by Magnanini and Poggesi, see \cite[Corollary 4.3]{MP}. In both cases, we observe that such a constant tends to $1$, as $p$ goes to $\infty$. Accordingly, the maximum points of $w_\Omega$ get closer and closer to the maximum points of the distance function $d_\Omega$. 
\end{rem}
 
\subsection{Generalized principal frequencies}

By using the upper bound of Theorem \ref{thm:bounds}, we can prove the following sharp geometric estimate for $\lambda_{p,q}$, in convex sets. This generalizes to the case $p\not =2$ the result of \cite[Theorem 1.1]{BM}. We point out that the proof here is slightly different from that of \cite{BM}, since we now rely on Theorem \ref{thm:bounds}, which is turn follows from the comparison principle.
\begin{thm}[Hersch-Protter--type inequality]
\label{thm:HP}
 Let  $1 \le q < p<\infty$. Let $\Omega \subset \mathbb{R}^N$ be an open bounded convex set. Then the following lower bound holds
\begin{equation}
\label{eq:upper bound inr}
\lambda_{p,q}(\Omega)\,|\Omega|^{\frac{p-q}{q}} \ge \left( \frac{\pi_{p,q}}{2} \right)^p\,\frac{1}{r_\Omega^p}.
\end{equation}
Moreover, the estimate is sharp.
\end{thm}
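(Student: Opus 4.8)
The plan is to derive \eqref{eq:upper bound inr} from the upper pointwise bound \eqref{eq:pointestimate2} of Theorem \ref{thm:bounds}, combined with a one-dimensional reduction resting on a geometric property of the inner parallel sets of a convex body; sharpness will then follow from the asymptotic analysis of $w_{\Omega_n}$ carried out in Appendix \ref{app:C}. Throughout, by \eqref{walfa} it is not restrictive to take $\alpha=1$.

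\emph{Reduction to an $L^q$ estimate.} First I would record the identity $\lambda_{p,q}(\Omega)=\|w_\Omega\|_{L^q(\Omega)}^{q-p}$: the normalized minimizer of the Rayleigh quotient defining $\lambda_{p,q}(\Omega)$ is the unique positive solution of \eqref{LE} with $\alpha=\lambda_{p,q}(\Omega)$ and homogeneous boundary values, hence it equals $w_{\Omega,\lambda_{p,q}(\Omega)}=\lambda_{p,q}(\Omega)^{1/(p-q)}\,w_\Omega$ by Corollary \ref{coro:unicasolpos} and \eqref{walfa}; imposing the normalization $\int_\Omega u^q\,dx=1$ then forces $\lambda_{p,q}(\Omega)^{q/(p-q)}\,\|w_\Omega\|_{L^q(\Omega)}^q=1$, which is the claimed identity. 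Since $q<p$, inequality \eqref{eq:upper bound inr} is therefore equivalent to
\[
\|w_\Omega\|_{L^q(\Omega)}^q\;\le\;|\Omega|\;r_\Omega^{\frac{p\,q}{p-q}}\;\Big(\frac{2}{\pi_{p,q}}\Big)^{\frac{p\,q}{p-q}},
\]
and this is what I would prove.

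\emph{The $L^q$ estimate.} Raising \eqref{eq:pointestimate2} (with $\alpha=1$), i.e. $w_\Omega(x)\le r_\Omega^{p/(p-q)}\,w_I\big(d_\Omega(x)/r_\Omega-1\big)$, to the power $q$ and integrating reduces the matter to showing $\int_\Omega h(d_\Omega(x))\,dx\le \tfrac{|\Omega|}{r_\Omega}\int_0^{r_\Omega}h(t)\,dt$ for the function $h(t):=w_I(t/r_\Omega-1)^q$, which is non-negative, $C^1$ and non-decreasing on $[0,r_\Omega]$ with $h(0)=0$ (here I use $0\le d_\Omega\le r_\Omega$ and the monotonicity of $w_I$ on $(-1,0)$ from Lemma \ref{lem:1d}). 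Writing $h(d_\Omega(x))=\int_0^{r_\Omega}h'(t)\,\mathbf{1}_{\{d_\Omega(x)>t\}}\,dt$ and applying Tonelli's theorem, the left-hand side equals $\int_0^{r_\Omega}h'(t)\,|\Omega_t|\,dt$, where $\Omega_t:=\{x\in\Omega:d_\Omega(x)>t\}$. The geometric core is the bound
\[
|\Omega_t|\;\le\;\Big(1-\frac{t}{r_\Omega}\Big)\,|\Omega|,\qquad 0\le t\le r_\Omega,
\]
valid for every bounded convex $\Omega$: since $|\nabla d_\Omega|=1$ a.e., the coarea formula gives $|\Omega|-|\Omega_t|=\int_0^t\mathcal{H}^{N-1}(\{d_\Omega=s\})\,ds$ and $|\Omega|=\int_0^{r_\Omega}\mathcal{H}^{N-1}(\{d_\Omega=s\})\,ds$; moreover $\mathcal{H}^{N-1}(\{d_\Omega=s\})$ is the surface area of the convex body $\{d_\Omega\ge s\}$, hence non-increasing in $s$ (surface area is monotone under inclusion of convex bodies), so its average over $[0,t]$ dominates its average over $[0,r_\Omega]$, which is precisely the displayed inequality. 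Plugging this in (using $h'\ge0$), then integrating by parts, and finally changing variables $s=t/r_\Omega-1$ and invoking \eqref{eq:C}, I obtain $\int_\Omega h(d_\Omega)\,dx\le|\Omega|\int_0^{r_\Omega}h'(t)\,(1-t/r_\Omega)\,dt=\tfrac{|\Omega|}{r_\Omega}\int_0^{r_\Omega}h(t)\,dt=|\Omega|\int_{-1}^0 w_I(s)^q\,ds=|\Omega|\,(2/\pi_{p,q})^{p\,q/(p-q)}$, which is exactly the desired $L^q$ estimate.

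\emph{Sharpness, and the main obstacle.} I would test \eqref{eq:upper bound inr} on the sets $\Omega_n=(-n/2,n/2)^{N-1}\times(-1,1)$, for which $r_{\Omega_n}=1$ once $n\ge2$, so the right-hand side equals $(\pi_{p,q}/2)^p$; by the identity of the first step it suffices to show $\|w_{\Omega_n}\|_{L^q(\Omega_n)}^q/|\Omega_n|\to (2/\pi_{p,q})^{p\,q/(p-q)}$. For this I would combine the pointwise convergence $w_{\Omega_n}(x',x_N)\to w_I(x_N)$ of Lemma \ref{lm:basta!} with the uniform domination $w_{\Omega_n}(x',x_N)\le w_I(x_N)$ — a consequence of \eqref{eq:pointestimate2} applied to $\Omega_n$, since $d_{\Omega_n}(x',x_N)\le 1-|x_N|$ and $w_I$ is even and increasing on $(-1,0)$ — and pass to the limit by dominated convergence, obtaining $\tfrac{1}{|\Omega_n|}\int_{\Omega_n}w_{\Omega_n}^q\,dx\to\tfrac12\int_{-1}^1 w_I(s)^q\,ds=(2/\pi_{p,q})^{p\,q/(p-q)}$, as needed. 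I expect the most delicate point to be technical rather than conceptual: making precise the coarea identities for $d_\Omega$ (the level sets $\{d_\Omega=s\}$ for $s<r_\Omega$ are boundaries of convex bodies and $\{d_\Omega=r_\Omega\}$ is Lebesgue-negligible) and the monotonicity under inclusion of the surface areas of the nested convex bodies $\{d_\Omega\ge s\}$, together with the careful use of the domination furnished by Theorem \ref{thm:bounds} in the passage to the limit.
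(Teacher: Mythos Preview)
For the inequality itself your argument and the paper's are essentially the same: both raise \eqref{eq:pointestimate2} to the power $q$, integrate, and reduce to a one-dimensional estimate via the monotonicity of $s\mapsto\mathcal{H}^{N-1}(\partial\Omega_s)$ on a convex set. Where the paper writes $\int_\Omega h(d_\Omega)\,dx=\int_0^{r_\Omega}h(t)\,P(t)\,dt$ via the coarea formula and invokes an external Chebyshev-type lemma \cite[Lemma A.1]{BM}, you go through the equivalent layer-cake representation $\int_0^{r_\Omega}h'(t)\,|\Omega_t|\,dt$, the volume bound $|\Omega_t|\le(1-t/r_\Omega)|\Omega|$, and an integration by parts. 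These are two organisations of the same computation; yours has the advantage of being self-contained and of making the role of $h(0)=w_I(-1)^q=0$ explicit.

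Your sharpness argument, however, differs from the paper's and has a gap as written. The paper does \emph{not} use Lemma \ref{lm:basta!} here: it quotes the upper bound $\lambda_{p,q}(\Omega_L)\le(\pi_{p,q}/2)^p\big(P(\Omega_L)/|\Omega_L|^{1-1/p+1/q}\big)^p$ from \cite{Bra1} and sandwiches. Your route via the identity $\lambda_{p,q}(\Omega_n)=\|w_{\Omega_n}\|_{L^q(\Omega_n)}^{q-p}$ is more intrinsic, but ``dominated convergence'' does not directly yield $\tfrac{1}{|\Omega_n|}\int_{\Omega_n}w_{\Omega_n}^q\to\tfrac12\int_{-1}^1 w_I^q$, because you are averaging over a \emph{growing} domain: pointwise convergence plus the bound $w_{\Omega_n}(x',x_N)\le w_I(x_N)$ gives nothing for the average without further work. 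The fix is easy. Rescale $x'\mapsto x'/n$ so that the integral becomes $\tfrac12\int_{(-1/2,1/2)^{N-1}\times I} f_n^q$ with $f_n(y',x_N):=w_{\Omega_n}(ny',x_N)$; the domination $0\le f_n\le w_I(x_N)$ persists, and translation invariance together with the comparison principle (the translate of $\Omega_n$ centred at $(ny',0)$ contains $\Omega_m$ with $m=n(1-2\max_i|y'_i|)\to\infty$) upgrades Lemma \ref{lm:basta!} to $f_n(y',x_N)\to w_I(x_N)$ for a.e.\ $y'\in(-\tfrac12,\tfrac12)^{N-1}$, so DCT applies on the fixed cube. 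Alternatively, observe that $\limsup$ is free from the inequality you just proved, and obtain the matching $\liminf$ by tiling $\Omega_n$ with $\sim(n/m)^{N-1}$ disjoint translates of $\Omega_m$ and using comparison on each tile.
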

\begin{proof}
We take $w_{\Omega,\alpha}\in W^{1,p}_0(\Omega)$ the unique positive solution of \eqref{LE} with homogeneous Dirichlet boundary conditions, corresponding to the choice $\alpha=\lambda_{p,q}(\Omega)$. By uniqueness, this must coincide with the positive minimizer of
\[
\lambda_{p,q}(\Omega)=\min_{\psi\in W^{1,p}_0(\Omega)}\left\{\int_\Omega |\nabla \psi|^p\,dx\, :\, \int_\Omega |\psi|^q\,dx=1\right\},
\]
which is a positive solution of the same boundary value problem, by optimality.
Thus in particular we have 
\begin{equation}
\label{norma1}
\int_\Omega |w_{\Omega,\alpha}|^q\,dx=1.
\end{equation}
We also observe that from Theorem \ref{thm:bounds}, we get
\[
w_{\Omega,\alpha}(x) \le r_\Omega^{\frac{p}{p-q}}\, w_{I,\alpha}\left(\frac{d_{\Omega}(x)}{r_\Omega}-1\right),\qquad \mbox{ in }\Omega.
\]
We raise to the power $q$ and integrate over $\Omega$. By taking \eqref{norma1} into account and using \eqref{walfa} with $\alpha=\lambda_{p,q}(\Omega)$, this yields
\begin{equation}
\label{siparte}
1\le \left(\lambda_{p,q}(\Omega)\,r_\Omega^p\right)^{\frac{q}{p-q}}\, \int_\Omega \left[w_I\left(\frac{d_{\Omega}(x)}{r_\Omega}-1\right)\right]^q\,dx.
\end{equation}
In order to conclude the proof, we need to extract the geometrical content from the last integral.
\par
By using the {\it Coarea Formula} and the fact that $|\nabla d_\Omega|=1$ almost everywhere in $\Omega$, we can write
\[
\int_\Omega \left[w_I\left(\frac{d_{\Omega}(x)}{r_\Omega}-1\right)\right]^q\,dx = \int_{0}^{r_\Omega} \left[w_I\left(\frac{t}{r_\Omega}-1\right) \right]^{q}\, P(t)  \, dt, 
\]
where we set 
\[
P(t)=\mathcal{H}^{N-1}(\partial\Omega_t) \qquad \mbox{ and }\qquad \Omega_t=\Big\{x\in \Omega\, :\, d_\Omega(x)>t\Big\}.
\] 
We introduce the function
\[
\xi(t)=\int_{0}^t \left[w_I\left(\frac{\tau}{r_\Omega}-1\right) \right]^{q}\, d\tau.
\]
It is not difficult to see that $t\mapsto \xi(t)/t$ is monotone increasing, while $t\mapsto P(t)$ is decreasing by convexity of $\Omega$ (see for example \cite[Lemma 2.2.2]{BB}). By appealing to \cite[Lemma A.1]{BM}, we get
\[
\int_{0}^{r_\Omega} \left[w_I\left(\frac{t}{r_\Omega}-1\right) \right]^{q}\, P(t)  \, dt\le \frac{\xi(r_\Omega)}{r_\Omega}\,\int_0^{r_\Omega} P(t)\,dt.
\] 
By recalling the definition of $\xi$ and using again Coarea Formula, this is the same as
\[
\int_{0}^{r_\Omega} \left[w_I\left(\frac{t}{r_\Omega}-1\right) \right]^{q}\, P(t)  \, dt\le \frac{|\Omega|}{r_\Omega}\,\int_{0}^{r_\Omega} \left[w_I\left(\frac{t}{r_\Omega}-1\right) \right]^{q}\,dt.
\]
Finally,  
by making the change of variable $s=t/r_\Omega-1$, we obtain
\[
\int_\Omega \left[w_I\left(\frac{d_\Omega(x)}{r_\Omega}-1\right)\right]^q\,dx \le |\Omega|\, \int_{-1}^{0} w_I(s)^{q} \, ds.
\]
By inserting this estimate into \eqref{siparte} and recalling \eqref{eq:C}, we eventually conclude the proof of the inequality.
\vskip.2cm\noindent	
As in the case $p=2$,  we show that inequality \eqref{eq:upper bound inr} is asimptotically sharp for the slab-type sequence 
\[
\Omega_L = \left( -\frac{L}{2}, \frac{L}{2} \right)^{N-1} \times I \subset \mathbb{R}^N.
\]
Indeed, for $L>1$ we have that 
\begin{equation}
\label{inraggio}
r_{\Omega_L}= 1 \qquad \text{ hence }\qquad \left( \frac{\pi_{p,q}}{2} \right)^p\,\frac{1}{r_\Omega^p}=\left( \frac{\pi_{p,q}}{2} \right)^p. 
\end{equation}
In order to estimate $\lambda_{p,q}(\Omega_L)$, we use that (see \cite[Main Theorem]{Bra1})	
\[ 
\lambda_{p,q}(\Omega_L) \le \left( \frac{\pi_{p,q}}{2} \right)^p\, \left( \frac{P(\Omega_L)}{|\Omega_L|^{1-\frac{1}{p}+\frac{1}{q}}} \right)^p.
\]
By joining this estimate and \eqref{eq:upper bound inr}, we get
\[
1\le \left( \frac{2}{\pi_{p,q}} \right)^p\,r_{\Omega_L}^p\,|\Omega_L|^\frac{p-q}{q}\,\lambda_{p,q}(\Omega_L)\le r_{\Omega_L}^p\,\left(\frac{P(\Omega_L)}{|\Omega_L|}\right)^p.
\]
If we now recall \eqref{inraggio} and use that 
\[
P(\Omega_L)\sim 2 \,L^{N-1},\qquad |\Omega_L|=2\,L^{N-1},\qquad \mbox{ as } L\to +\infty,
\]
we get
\[
\lim_{L\to +\infty}\left[\left( \frac{2}{\pi_{p,q}} \right)^p\,r_{\Omega_L}^p\,|\Omega_L|^\frac{p-q}{q}\,\lambda_{p,q}(\Omega_L)\right]=1,
\]
which proves the claimed sharpness of the estimate.
\end{proof}

\begin{rem}[More general sets]
Apart for the simple lower bound \eqref{eq:pointestimate}, all the results of Section \ref{sec:5} have been proved under the assumption that $\Omega$ is convex.
Actually, in the proof of Theorem \ref{thm:bounds}, convexity was used in the proof of the upper bound \eqref{eq:pointestimate2} only to assure that the distance function $d_\Omega$ was weakly superharmonic. 
Thus the upper bound \eqref{eq:pointestimate2} (and consequently all its consequences in Section \ref{sec:5}) continues to hold for all sets such that
\begin{equation}
\label{wsh}
-\Delta d_{\Omega} \ge 0,\qquad \mbox{ in }\Omega,
\end{equation}
in weak sense. For completeness, we recall that condition \eqref{wsh} is equivalent to require that $\Omega$ is convex in dimension $N=2$, but it is otherwise a weaker condition for $N\ge 3$, see \cite{AU}.
\end{rem}


\appendix

\section{Quantified convexity of power functions}
\label{app:A}

\begin{lemma}
\label{lm:lambdaconvexgen}
Let $r\ge 2$. For every $z,w\in\mathbb{R}^N$ and every $t\in[0,1]$ we have
\[
t\,|z|^r+(1-t)\, |w|^r\ge |t\, z+(1-t)\, w|^r+C\, t\,(1-t)\, |z-w|^r,
\]
where $C=C(r)>0$.
\end{lemma}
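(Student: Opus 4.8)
The plan is to reduce the $N$-dimensional inequality first to the case $t=1/2$ and then to an elementary one-variable estimate, exploiting the concavity in $t$ of the left-hand side. By the symmetry $(z,w,t)\mapsto(w,z,1-t)$ it suffices to treat $t\in[0,1/2]$. Set
\[
\Phi(t)=t\,|z|^r+(1-t)\,|w|^r-|t\,z+(1-t)\,w|^r .
\]
Since $t\mapsto t\,z+(1-t)\,w$ is affine and $\xi\mapsto|\xi|^r$ is convex, $\Phi$ is concave on $[0,1]$, and $\Phi(0)=0$; hence the chord inequality for concave functions gives $\Phi(t)\ge 2\,t\,\Phi(1/2)$ for every $t\in[0,1/2]$. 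Since $t(1-t)\le t$, the Lemma (with $C(r)=2^{\,1-r}$) follows once one proves the ``parallelogram'' estimate
\[
\Phi\!\left(\tfrac12\right)=\frac{|z|^r+|w|^r}{2}-\left|\frac{z+w}{2}\right|^r\ \ge\ \left|\frac{z-w}{2}\right|^r .
\]

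To prove this last display I would invoke the parallelogram law $|z+w|^2+|z-w|^2=2(|z|^2+|w|^2)$. Writing $P=|z+w|^2$, $Q=|z-w|^2$ and $u=|z|$, $v=|w|$ with $u\ge v$ (without loss of generality), one has $P+Q=2(u^2+v^2)$, and by the triangle inequality $P,Q\in[(u-v)^2,(u+v)^2]$. Since $r/2\ge1$, the map $(P,Q)\mapsto P^{r/2}+Q^{r/2}$ is convex, hence on the feasible segment it attains its maximum at an endpoint, i.e. where $\{P,Q\}=\{(u-v)^2,(u+v)^2\}$, where it equals $(u-v)^r+(u+v)^r$. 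Dividing by $4^{r/2}$, the parallelogram estimate reduces to the scalar inequality
\[
(u+v)^r+(u-v)^r\ \le\ 2^{\,r-1}\,(u^r+v^r),\qquad 0\le v\le u,\ r\ge 2 .
\]

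Finally, this scalar inequality is one-variable calculus: the case $v=0$ is immediate, while for $v>0$ one sets $x=u/v\ge1$ and checks that $g(x):=2^{\,r-1}(x^r+1)-(x+1)^r-(x-1)^r$ has $g(1)=0$ and $g'(x)\ge0$ on $[1,\infty)$; the latter is equivalent to $(1+y)^{r-1}+(1-y)^{r-1}\le 2^{\,r-1}$ for $y\in(0,1]$ (put $y=1/x$), which holds because this function is nondecreasing on $[0,1]$ — its derivative is $(r-1)\big[(1+y)^{r-2}-(1-y)^{r-2}\big]\ge0$ — with value $2^{\,r-1}$ at $y=1$. I expect the only genuinely delicate point to be this reduction step: justifying that $P^{r/2}+Q^{r/2}$ is maximized at the endpoints of the constrained segment and correctly reading off those endpoints from the norm constraints; everything else is routine, and the resulting constant, though not optimal, is positive, which is all that is needed. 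A softer alternative would be to apply the pointwise bound $|z|^r\ge|m|^r+r\,|m|^{r-2}\langle m,z-m\rangle+c_r\,|z-m|^r$, valid for $r\ge2$, at $m=t\,z+(1-t)\,w$ to both $z$ and $w$ and add with weights $t$ and $1-t$ (using $z-m=(1-t)(z-w)$, $w-m=-t(z-w)$); but that merely shifts the burden onto establishing this auxiliary inequality.
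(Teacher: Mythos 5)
Your proof is correct, and it follows a genuinely different route from the paper. The paper's proof is short and invokes directly the ``strong convexity'' inequality $|\xi|^r \ge |\zeta|^r + \langle r|\zeta|^{r-2}\zeta,\xi-\zeta\rangle + C_r|\xi-\zeta|^r$ from Lindqvist (the very inequality you mention at the end as a ``softer alternative''), applies it with $\zeta=t\,z+(1-t)\,w$ and $\xi\in\{z,w\}$, sums with weights $t$ and $1-t$, and finishes with the elementary bound $(1-t)^{r-1}+t^{r-1}\ge 2^{2-r}$. Your approach, by contrast, is entirely self-contained: you first exploit concavity of $t\mapsto\Phi(t)$ together with the chord inequality to reduce to $t=1/2$; then you pass from the vector inequality to a scalar one by the parallelogram law combined with the convexity of $s\mapsto s^{r/2}$ (which lets you slide $(P,Q)$ to an endpoint of the constrained segment); finally you settle the scalar inequality $(u+v)^r+(u-v)^r\le 2^{r-1}(u^r+v^r)$ by one-variable calculus. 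All the steps check out — in particular, the constrained maximization of $P^{r/2}+Q^{r/2}$ on the segment $P+Q=2(u^2+v^2)$, $P\in[(u-v)^2,(u+v)^2]$ is rigorous, since that function is convex and both endpoints yield the same value $(u+v)^r+(u-v)^r$. What each route buys: the paper's is shorter but outsources the real work to a cited inequality; yours is longer but requires no external reference and produces an explicit (if not optimal) constant $C=2^{1-r}$.
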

\begin{proof}
For simplicity, we set $F(z)=|z|^r$. For $t=0$ or $t=1$ there is nothing to prove, so let us assume $0<t<1$. From \cite[Lemma 4.2, equation (4.3)]{Lin}, we know that there exists $C_r>0$ such that
\begin{equation}
\label{lambdaconvex2bis}
F(\xi)\ge F(\zeta)+\langle \nabla F(\zeta),\xi-\zeta\rangle+C_r\,|\xi-\zeta|^r,\qquad \mbox{ for every } \xi,\zeta\in \mathbb{R}^N.
\end{equation}
We use \eqref{lambdaconvex2bis} with 
\[
\xi=z\qquad \mbox{ and }\qquad \zeta=t\,z+(1-t)\,w.
\]
We obtain
 \begin{equation}
\label{z}
\begin{split}
F(z)&\ge F(t\,z+(1-t)\,w)+(1-t)\,\langle \nabla F(t\,z+(1-t)\,w),z-w\rangle+C_r\, (1-t)^r\,|z-w|^r.
\end{split}
\end{equation}
Similary, we use \eqref{lambdaconvex2bis} with 
\[
\xi=w\qquad \mbox{ and }\qquad \zeta=t\,z+(1-t)\,w
\]
This now yields
\begin{equation}
\label{w}
F(w)\ge F(t\,z+(1-t)\,w)+t\,\langle \nabla F(t\,z+(1-t)\,w),w-z\rangle+C_r\, t^r\,|z-w|^r.
\end{equation}
We multiply \eqref{z} by $t$, then multiply \eqref{w} by $1-t$ and sum up. The outcome is the following
\[
\begin{split}
(1-t)\,F(w)+t\, F(z)&\ge F(t\,z+(1-t)\,w)\\
&+C_r\, \left[(1-t)^{r-1}+t^{r-1}\right]\,t\,(1-t)\, |z-w|^r.
\end{split}
\]
By using convexity of the function $\tau\mapsto\tau^{r-1}$, we get
\[
(1-t)^{r-1}+t^{r-1}\ge 2^{2-r},
\] 
and thus the conclusion.
\end{proof}
\begin{rem}
We observe that the extra term $C_r\,|\xi-\zeta|^r$ in \eqref{lambdaconvex2bis} permits to prove an improved version of the classical Jensen inequality for the convex function $F(z)=|z|^r$, containing a suitable remainder term. A general class of functions which satisfy this kind of stronger Jensen's inequality  is widely studied  in \cite{AJS}.  
We owe this remark and reference \cite{AJS} to the kind courtesy of an anonymous referee.
\end{rem}
\begin{lemma}
\label{lm:lambdaconvexgensub}
Let $1<r<2$. For every $z,w\in\mathbb{R}^N$ and every $t\in[0,1]$ we have
\[
t\,|z|^r+(1-t)\, |w|^r\ge |t\, z+(1-t)\, w|^r+C\, t\,(1-t)\,\left(|z|^2+|w|^2\right)^\frac{r-2}{2}\, |z-w|^2,
\]
where $C=C(r)>0$.
\end{lemma}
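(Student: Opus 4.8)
The plan is to run the same argument as in the proof of Lemma \ref{lm:lambdaconvexgen}, with the strong-convexity estimate \eqref{lambdaconvex2bis} replaced by its subquadratic counterpart. Writing $F(z)=|z|^r$, the input will be the well-known fact that for $1<r<2$ there exists $C_r>0$ such that
\[
F(\xi)\ge F(\zeta)+\langle \nabla F(\zeta),\xi-\zeta\rangle+C_r\,\bigl(|\xi|^2+|\zeta|^2\bigr)^\frac{r-2}{2}\,|\xi-\zeta|^2,\qquad \mbox{ for every }\xi,\zeta\in\mathbb{R}^N\mbox{ with }(\xi,\zeta)\ne(0,0),
\]
which is the analogue of \eqref{lambdaconvex2bis} in the range $1<r<2$ and can again be found in \cite{Lin}; there the weight is usually written $(|\xi|+|\zeta|)^{r-2}$, which is comparable to $(|\xi|^2+|\zeta|^2)^{(r-2)/2}$ up to a constant depending only on $r$, since $r-2<0$.

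For $t\in\{0,1\}$, and also when $z=w$ (in which case both sides of the asserted inequality reduce to $|z|^r$, the remainder term vanishing), there is nothing to prove. For $0<t<1$ and $z\ne w$ I would apply the displayed inequality twice, with $\zeta=t\,z+(1-t)\,w$ and then $\xi=z$, respectively $\xi=w$, using $z-\zeta=(1-t)\,(z-w)$ and $w-\zeta=-t\,(z-w)$. Multiplying the first resulting inequality by $t$, the second by $1-t$ and adding, the first-order terms cancel and one obtains
\[
t\,F(z)+(1-t)\,F(w)\ge F(\zeta)+C_r\,t\,(1-t)\,|z-w|^2\,\Bigl[(1-t)\,\bigl(|z|^2+|\zeta|^2\bigr)^\frac{r-2}{2}+t\,\bigl(|w|^2+|\zeta|^2\bigr)^\frac{r-2}{2}\Bigr].
\]

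The only step with any content is the lower bound on the bracket, and I expect it to be the main (mild) obstacle because of the degenerate negative-power weight. Here I would use the elementary inequality $|\zeta|^2=|t\,z+(1-t)\,w|^2\le t\,|z|^2+(1-t)\,|w|^2\le |z|^2+|w|^2$, which follows from the convexity of $s\mapsto s^2$, so that $|z|^2+|\zeta|^2\le 2\,(|z|^2+|w|^2)$ and likewise $|w|^2+|\zeta|^2\le 2\,(|z|^2+|w|^2)$. Since $(r-2)/2<0$, the map $s\mapsto s^{(r-2)/2}$ is decreasing, hence each of the two terms in the bracket is at least $2^{(r-2)/2}\,(|z|^2+|w|^2)^{(r-2)/2}$, and therefore the bracket is at least $2^{(r-2)/2}\,(|z|^2+|w|^2)^{(r-2)/2}$. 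This yields the claim with $C=2^{(r-2)/2}\,C_r$. Note that this estimate remains valid even in the borderline case $\zeta=0$, which forces $z$ and $w$ to be antiparallel: the bound $|z|^2+|\zeta|^2\le 2\,(|z|^2+|w|^2)$ still holds, and the conclusion goes through unchanged.
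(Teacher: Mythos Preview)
Your proof is correct and follows precisely the approach the paper indicates: it reruns the argument of Lemma~\ref{lm:lambdaconvexgen} with the subquadratic tangent-line estimate from \cite[Lemma 4.2, equation (4.4)]{Lin} in place of \eqref{lambdaconvex2bis}, which is exactly what the paper's one-line proof asks the reader to do. Your handling of the remainder term via $|\zeta|^2\le |z|^2+|w|^2$ and the monotonicity of $s\mapsto s^{(r-2)/2}$ is the natural way to close the argument.
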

\begin{proof}
The proof is the same as that of Lemma \ref{lm:lambdaconvexgen}. It is sufficient to use this time \cite[Lemma 4.2, equation (4.4)]{Lin}. We leave the details to the reader.
\end{proof}

\section{Asymptotics of the positive solution in a slab-type sequence} 
\label{app:C}

We still use the notation $w_\Omega$ of Definition \ref{defn:w}.
For every $L>0$, we indicate by 
\begin{equation}
\label{slaba}
\Omega_L=\left(-\frac{L}{2},\frac{L}{2}\right)^{N-1}\times I,
\end{equation}
where we recall that $I=(-1,1)$. We then have the following convergence result.
\begin{lemma}
\label{lm:basta!}
Let $1\le q<p<\infty$, we define the function
\[
U_\infty(x',x_N)=w_I(x_N),\qquad \mbox{ for } x'\in \mathbb{R}^{N-1}, x_N\in I.
\] 
Then for every $L_0>0$ we have 
\[
\lim_{n\to\infty} \|w_{\Omega_n}-U_\infty\|_{L^p(\Omega_{L_0})}=0.
\]
\end{lemma}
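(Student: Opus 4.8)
\emph{Strategy.} The plan is to squeeze $w_{\Omega_n}$ between two barriers built from the one--dimensional profile $w_I$, and then to conclude by dominated convergence on the bounded set $\Omega_{L_0}$ (recall \eqref{slaba}). Throughout, each $w_{\Omega_n}$ is extended by $0$ outside $\Omega_n$, and we write $U_\infty(x',x_N)=w_I(x_N)$.

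\emph{Upper barrier.} First I would check that $U_\infty$ is a positive weak solution of \eqref{LE} on every $\Omega_n$: it is Lipschitz, hence lies in $W^{1,p}(\Omega_n)=X^{q,p}(\Omega_n)$ on the bounded set $\Omega_n$, it is strictly positive there because $w_I>0$ on $I$ (Lemma \ref{lem:1d}), and, testing with $\psi\in C^\infty_0(\Omega_n)$, Fubini's theorem reduces $\int_{\Omega_n}|\nabla U_\infty|^{p-2}\langle\nabla U_\infty,\nabla\psi\rangle\,dx=\int_{\Omega_n}U_\infty^{q-1}\psi\,dx$ to the weak one--dimensional Euler--Lagrange equation satisfied by $w_I$ (for each fixed $x'$ one has $x_N\mapsto\psi(x',\cdot)\in C^\infty_0(I)$). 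Since $w_{\Omega_n}\in W^{1,p}_0(\Omega_n)$ and $U_\infty\ge0$, Lemma \ref{lem:Upos}(ii) gives $(w_{\Omega_n}-U_\infty)_+\in X^{q,p}_0(\Omega_n)$, so Theorem \ref{thm:comparison} yields $w_{\Omega_n}\le U_\infty$ a.e.; in particular $\limsup_n w_{\Omega_n}\le U_\infty$ pointwise on the slab $\mathbb{R}^{N-1}\times I$.

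\emph{Lower barrier.} Fix $\delta\in(0,1)$ and set $g_\delta:=(1-\delta)\,U_\infty$; scaling the computation above and using $q<p$, this is a \emph{strict} subsolution of \eqref{LE} on the slab, namely $-\Delta_p g_\delta=(1-\delta)^{p-q}g_\delta^{q-1}$ weakly with $(1-\delta)^{p-q}<1$. The plan is to localize $g_\delta$ in the transverse variables by a cut--off $\theta_R(x')$ equal to $1$ on the cube $Q_R:=(-R,R)^{N-1}$, positive on $Q_{2R}$ and vanishing outside, and to consider $v_R:=\theta_R\,g_\delta$, which lies in $W^{1,p}_0(Q_{2R}\times I)=X^{q,p}_0(Q_{2R}\times I)$ (it vanishes on the top and bottom faces since $w_I(\pm1)=0$). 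The claim is that, for a suitable choice of $\theta_R$ and all $R$ large depending on $\delta$, $v_R$ is a non--negative subsolution of \eqref{LE} on $Q_{2R}\times I$: on $Q_R\times I$ it coincides with $g_\delta$, while in the transition region the terms generated by $\nabla\theta_R$ have to be absorbed by the strict--subsolution margin $\bigl(1-(1-\delta)^{p-q}\bigr)g_\delta^{q-1}$; since this margin degenerates where $v_R$ is small, $\theta_R$ must be chosen so that its $p$--Laplacian in the $x'$ variables is non--negative near the outer edge of its support, so that the dominant error term there enters $-\Delta_p v_R$ with the favourable sign. Granting this, $v_R>0$ on $Q_{2R}\times I$, while $w_{\Omega_n}\big|_{Q_{2R}\times I}$ is a positive solution of \eqref{LE} on $Q_{2R}\times I\subset\Omega_n$ as soon as $n\ge 4R$, and $(v_R-w_{\Omega_n})_+\in X^{q,p}_0(Q_{2R}\times I)$ by Lemma \ref{lem:Upos}(ii); hence Theorem \ref{thm:comparison} gives $v_R\le w_{\Omega_n}$ on $Q_{2R}\times I$, that is $(1-\delta)\,w_I(x_N)\le w_{\Omega_n}(x',x_N)$ for $x'\in Q_R$ and every $n\ge4R$. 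Letting first $n\to\infty$, then $R\to\infty$, then $\delta\to0$ yields $\liminf_n w_{\Omega_n}\ge U_\infty$ pointwise on the slab.

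\emph{Conclusion and main obstacle.} Combining the two barriers, $w_{\Omega_n}\to U_\infty$ pointwise a.e.\ on the slab, and since $0\le w_{\Omega_n}\le U_\infty\in L^\infty(\Omega_{L_0})\subset L^p(\Omega_{L_0})$ on the bounded set $\Omega_{L_0}$, the Dominated Convergence Theorem gives $\|w_{\Omega_n}-U_\infty\|_{L^p(\Omega_{L_0})}\to0$. The upper barrier and all the reductions are routine; the hard part is verifying that $v_R$ is genuinely a subsolution up to the boundary of $Q_{2R}\times I$, where both the cut--off and the profile $w_I$ degenerate, so that no uniform absorption of the cut--off error is available — this is what forces the $p$--(sub)harmonic choice of $\theta_R$ in the transverse variables and a careful, case--by--case estimate near the edge, and it is the only non--trivial point of the argument.
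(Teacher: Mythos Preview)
Your upper barrier is correct and clean: $U_\infty$ is a positive weak solution of \eqref{LE} on each $\Omega_n$ (the weak formulation reduces by Fubini to the one--dimensional equation for $w_I$), and Theorem~\ref{thm:comparison} with Lemma~\ref{lem:Upos}(ii) gives $w_{\Omega_n}\le U_\infty$. The dominated--convergence step at the end is also fine.

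The lower barrier, however, has a real gap. For $v_R=\theta_R(x')\,g_\delta(x_N)$ one has $\nabla v_R=(g_\delta\nabla'\theta_R,\,\theta_R g_\delta')$, and the factor $|\nabla v_R|^{p-2}$ couples the two directions. When you expand $-\Delta_p v_R$, besides the ``decoupled'' pieces you also produce cross terms coming from $\partial_{x_N}|\nabla v_R|^{p-2}$ and $\nabla'|\nabla v_R|^{p-2}$; for instance, one of them is proportional to $(p-2)\,|\nabla v_R|^{p-4}\,\theta_R\,g_\delta\,(g_\delta')^2\,|\nabla'\theta_R|^2$. These terms are \emph{not} governed by the sign of $-\Delta_p^{x'}\theta_R$, so making $\theta_R$ $p$-subharmonic in the transverse variables does not, by itself, control them. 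Your heuristic is correct for $p=2$, where the Laplacian of a tensor product decouples, but for $p\ne 2$ it is precisely this nonlinear coupling that makes product sub/supersolutions unreliable. In the shell where $g_\delta|\nabla'\theta_R|$ and $\theta_R|g_\delta'|$ are of comparable size, the cross terms are of order one relative to the principal part and do not shrink with $R$; the margin $\bigl(1-(1-\delta)^{p-q}\bigr)g_\delta^{q-1}$ need not absorb them. So the ``only non--trivial point'' is genuinely open in your write--up.

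The paper circumvents this completely by a different route. It does not build a lower barrier at all: from the comparison principle it gets that the sequence $\{w_{\Omega_n}\}$ is \emph{monotone increasing} (since $\Omega_m\subset\Omega_n$ for $m\le n$), hence the pointwise limit $U=\lim_n w_{\Omega_n}$ exists, bounded by $w_I(0)$. A Caccioppoli inequality on $\Omega_{2L_0}$ yields a uniform $W^{1,p}$ bound, which allows passing to the limit in the weak formulation; thus $U$ solves \eqref{LE} on every $\Omega_{L_0}$ with zero trace on the top and bottom faces. Finally, translation invariance of the family $\{\Omega_n\}$ in $x'$ forces $U$ to be independent of $x'$, so $U=w_I$ by one--dimensional uniqueness. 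This ``monotonicity $+$ compactness $+$ symmetry'' argument is robust with respect to $p$ and requires no barrier engineering.
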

\begin{proof}
We will adapt to our nonlinear situation a related argument from \cite[Lemma 7.2]{BFR}, for the case $p=2$ and $q=1$.
\par
We extend all functions $w_{\Omega_L}$ to the whole slab $\mathbb{R}^{N-1}\times I$, by putting them constantly equal to $0$ outside $\Omega_L$. Observe that, by Elliptic Regularity (see for example \cite[Theorem 7.8]{Gi}), we know that $w_{\Omega_L}$ is H\"older continuous on $\overline{\Omega_L}$ and thus it takes the homogeneous Dirichlet boundary condition in classical pointwise sense. Accordingly, the extended functions are H\"older continuous on $\mathbb{R}^{N-1}\times I$. 
\par
We first observe that  
\[
w_{\Omega_{L_2}}\ge w_{\Omega_{L_1}},\qquad \mbox{ for } L_2\ge L_1,
\]
by the comparison principle of Theorem \ref{thm:comparison}, thanks to the fact that $\Omega_{L_2}\supset \Omega_{L_1}$. Thus, we get that $\{w_{\Omega_L}\}_{L>0}$ is a family of monotone increasing continuous functions.
Moreover, we have the uniform upper bound
\begin{equation}
\label{upperbound}
w_{\Omega_L}\le w_{I}(0),\qquad \mbox{ in } \mathbb{R}^{N-1}\times I,
\end{equation}
thanks to \eqref{eq:pointestimate}.
Then, the pointwise limit
\begin{equation}
\label{monot}
U(x)=\lim_{L\to +\infty} w_{\Omega_L}(x),\qquad \mbox{ for } x\in  \mathbb{R}^{N-1}\times I,
\end{equation}
is well-defined. Observe that this is a bounded function, which still satisfies \eqref{upperbound}. We also notice that, if we fix $L_0>0$ as in the statement, we have 
\begin{equation}
\label{LpU}
\lim_{L\to+\infty} \int_{\Omega_{4L_0}} |w_{\Omega_L}-U|^p\,dx=\lim_{n\to\infty} \int_{\Omega_{4L_0}} (U-w_{\Omega_L})^p\,dx=0,
\end{equation}
thanks to the Monotone Convergence Theorem. We devote the rest of the proof to show that 
\begin{equation}
\label{uguali1d}
U=U_\infty,\qquad \mbox{ in } \mathbb{R}^{N-1}\times I.
\end{equation}
Let us now work with the sequence $\{w_{\Omega_n}\}_{n\ge 4L_0}$,  where $\Omega_n$ is defined by \eqref{slaba}, with the choice $L=n$. We take $\eta\in C^{\infty}(\overline{\Omega_{4L_0}})$ a cut-off function such that
\[
0\le \eta\le 1,\qquad \eta\equiv 1\quad \mbox{ on } \overline{\Omega_{2L_0}} ,\qquad \|\nabla \eta\|_{L^\infty}\le \frac{C}{L_0},
\]
and 
\[
\eta\equiv 0\quad \mbox{ on } \Omega_{4L_0}\setminus\Omega_{3L_0}. 
\]
Then we insert the test function $\psi=\,\eta^p\,w_{\Omega_n}$ in the weak formulation of \eqref{LE} for $w_{\Omega_n}$. We get
\[
\begin{split}
\int_{\Omega_n} |\nabla w_{\Omega_n}|^p\,\eta^p\,dx+p\,\int_{\Omega_n} \langle |\nabla w_{\Omega_n}|^{p-2}\,\nabla w_{\Omega_n},\nabla \eta\rangle\,\eta^{p-1}\,w_{\Omega_n}\,dx=\int_{\Omega_n} w_{\Omega_n}^q\,\eta^p\,dx.
\end{split}
\]
By Young's inequality, for every $\delta>0$ we have
\[
\begin{split}
p\, \langle |\nabla w_{\Omega_n}|^{p-2}\,\nabla w_{\Omega_n},\nabla \eta\rangle\,\eta^{p-1}\,w_{\Omega_n}&\ge -\delta\,(p-1)\,|\nabla w_{\Omega_n}|^p\,\eta^p\\
&-\delta^{-(p-1)}\,w_{\Omega_n}^p\,|\nabla \eta|^p.
\end{split}
\]
By choosing $\delta=1/(2\,p-2)$, we then obtain the Caccioppoli-type inequality
\[
\frac{1}{2}\,\int_{\Omega_n} |\nabla w_{\Omega_n}|^p\,\eta^p\,dx\le (2\,p-2)^{p-1} \,\int_{\Omega_n} |\nabla \eta|^p\,w_{\Omega_n}^p\,dx+\int_{\Omega_n} w_{\Omega_n}^q\,\eta^p\,dx.
\]
By using this estimate, the properties of $\eta$ and the upper bound \eqref{upperbound}, we then obtain in particular
\begin{equation}
\label{boundgrad}
\int_{\Omega_{2L_0}} |\nabla w_{\Omega_n}|^p\,dx\le C,\qquad \mbox{ for every } n\ge 4\,L_0.
\end{equation}
for some uniform constant $C>0$.
This implies that the sequence $\{w_{\Omega_n}\}_{n\ge 4L_0}$ is bounded in $W^{1,p}(\Omega_{2L_0})$. Thus it weakly converges to a function $u\in W^{1,p}(\Omega_{2L_0})$: by uniqueness of the limit, we must have $u=U$. This in particular implies that $U$ belongs to $W^{1,p}(\Omega_{2L_0})$. By also using the compactness of the trace embedding (see \cite[Corollary 18.4]{Leonibook})
\[
W^{1,p}(\Omega_{2L_0})\hookrightarrow L^p(\partial \Omega_{2L_0}),
\]
and the boundary condition
\[
w_{\Omega_n}=0,\qquad \mbox{ on } \left(-L_0,L_0\right)^{N-1}\times\{-1,\,1\},
\]
we get that the trace of $U$ must have the same property.
\vskip.2cm\noindent
We claim that $U$ weakly solves the Lane-Emden equation \eqref{LE} in $\Omega_{L_0}$. We take $\zeta\in C^{\infty}(\overline{\Omega_{3L_0}})$ a cut-off function such that
\[
0\le \zeta\le 1,\qquad \zeta\equiv 1\quad \mbox{ on } \overline{\Omega_{L_0}} ,\qquad \|\nabla \zeta\|_{L^\infty}\le \frac{C}{L_0},
\]
and 
\[
\zeta\equiv 0\quad \mbox{ on } \Omega_{3L_0}\setminus\Omega_{2L_0}. 
\]
Then we have 
\begin{equation}
\label{pezzetti}
\begin{split}
\int_{\Omega_{2L_0}} \langle |\nabla w_{\Omega_n}|^{p-2}\,\nabla w_{\Omega_n},\nabla w_{\Omega_n}-\nabla U\rangle\,\zeta\,dx&=\int_{\Omega_{2L_0}} \langle |\nabla w_{\Omega_n}|^{p-2}\,\nabla w_{\Omega_n},\nabla ((w_{\Omega_n}-U)\,\zeta)\rangle\,dx\\
&-\int_{\Omega_{2L_0}} \langle |\nabla w_{\Omega_n}|^{p-2}\,\nabla w_{\Omega_n},\nabla \zeta\rangle\,(w_{\Omega_n}-U)\,dx\\
&=\int_{\Omega_{2L_0}} w_{\Omega_n}^{q-1}\,(w_{\Omega_n}-U)\,\zeta\,dx\\
&-\int_{\Omega_{2L_0}} \langle |\nabla w_{\Omega_n}|^{p-2}\,\nabla w_{\Omega_n},\nabla \zeta\rangle\,(w_{\Omega_n}-U)\,dx,
\end{split}
\end{equation}
where we used the equation for $w_{\Omega_n}$, tested against the function $\psi=\zeta\,(w_{\Omega_n}-U)$. Indeed, this is a  feasible test function, thanks to the condition on the trace of $w_{\Omega_n}-U$. We now observe that 
\[
\lim_{n\to\infty} \int_{\Omega_{2L_0}} w_{\Omega_n}^{q-1}\,(w_{\Omega_n}-U)\,\zeta\,dx=0,
\]
thanks to the uniform bound \eqref{upperbound} and to the strong convergence \eqref{LpU}. Moreover, by using \eqref{boundgrad} and again \eqref{LpU}, we also get
\[
\lim_{n\to\infty} \int_{\Omega_{2L_0}} \langle |\nabla w_{\Omega_n}|^{p-2}\,\nabla w_{\Omega_n},\nabla \zeta\rangle\,(w_{\Omega_n}-U)\,dx=0.
\] 
On account of \eqref{pezzetti}, these yield
\[
\lim_{n\to\infty}\int_{\Omega_{2L_0}} \langle |\nabla w_{\Omega_n}|^{p-2}\,\nabla w_{\Omega_n},\nabla w_{\Omega_n}-\nabla U\rangle\,\zeta\,dx=0.
\]
Moreover, we also have 
\[
\lim_{n\to\infty}\int_{\Omega_{2L_0}} \langle |\nabla U|^{p-2}\,\nabla U,\nabla w_{\Omega_n}-\nabla U\rangle\,\zeta\,dx=0,
\]
thanks to the weak convergence in $W^{1,p}(\Omega_{2L_0})$ of $w_{\Omega_n}$. By subtracting the last two equations in display, we get
\[
\lim_{n\to\infty} \int_{\Omega_{2L_0}} \langle |\nabla w_{\Omega_n}|^{p-2}\,\nabla w_{\Omega_n}-|\nabla U|^{p-2}\,\nabla U,\nabla w_{\Omega_n}-\nabla U\rangle\,\zeta\,dx=0,
\]
as well. By recalling that $\zeta$ is constantly equal to $1$ on $\overline{\Omega_{L_0}}$ and that the integrand is non-negative, we get in particular
\[
\lim_{n\to\infty} \int_{\Omega_{L_0}} \langle |\nabla w_{\Omega_n}|^{p-2}\,\nabla w_{\Omega_n}-|\nabla U|^{p-2}\,\nabla U,\nabla w_{\Omega_n}-\nabla U\rangle\,dx=0.
\]
For $p\ge 2$, by recalling the inequality (see \cite[Section 10, equation (I)]{Lin2})
\[
\langle |z|^{p-2}\,z-|w|^{p-2}\,w,z-w\rangle\ge 2^{2-p}\,|z-w|^p,\qquad \mbox{ for every } z,w\in\mathbb{R}^N,
\]
we immediately obtain
\begin{equation}
\label{vamos!}
\lim_{n\to\infty} \|\nabla w_{\Omega_n}-\nabla U\|_{L^p(\Omega_{L_0};\mathbb{R}^N)}=0.
\end{equation}
For $1<p<2$, it is slightly more complicate: we need to use the inequality (see \cite[Section 10, equation (VII)]{Lin2})
\[
\langle |z|^{p-2}\,z-|w|^{p-2}\,w,z-w\rangle\ge (p-1)\,|z-w|^2\,(1+|z|^2+|w|^2)^\frac{p-2}{2},\qquad \mbox{ for every } z,w\in\mathbb{R}^N.
\]
This permits to infer, thanks to H\"older's inequality, that we have
\[
\begin{split}
\int_{\Omega_{L_0}} |\nabla w_{\Omega_n}-\nabla U|^p\,dx&\le \left(\int_{\Omega_{L_0}} |\nabla w_{\Omega_n}-\nabla U|^2\,(1+|\nabla w_{\Omega_n}|^2+|\nabla U|^2)^\frac{p-2}{2}\,dx\right)^\frac{p}{2}\\
&\times \left(\int_{\Omega_{L_0}}(1+|\nabla w_{\Omega_n}|^2+|\nabla U|^2)^\frac{p}{2}\,dx\right)^\frac{2-p}{2}\\
&\le \left(\frac{1}{p-1}\,\int_{\Omega_{L_0}} \langle |\nabla w_{\Omega_n}|^{p-2}\,\nabla w_{\Omega_n}-|\nabla U|^{p-2}\,\nabla U,\nabla w_{\Omega_n}-\nabla U\rangle\,dx\right)^\frac{p}{2}\\
&\times \left(\int_{\Omega_{L_0}}(1+|\nabla w_{\Omega_n}|^2+|\nabla U|^2)^\frac{p}{2}\,dx\right)^\frac{2-p}{2}\\
\end{split}
\]
By using that the last integral is uniformly bounded thanks to \eqref{boundgrad}, while the other one converges to $0$, we get \eqref{vamos!} for $1<p<2$, as well.
\par
Thanks to the strong convergence \eqref{vamos!}, we can now pass to the limit in 
\[
\int_{\Omega_{L_0}} \langle |\nabla w_{\Omega_n}|^{p-2}\,\nabla w_{\Omega_n},\nabla \psi\rangle\,dx=\int_{\Omega_{L_0}} w_{\Omega_n}^{q-1}\,\psi\,dx,\qquad \mbox{ for every } \psi\in C^\infty_0(\Omega_{L_0}),
\]
and obtain
\[
\int_{\Omega_{L_0}} \langle |\nabla U|^{p-2}\,\nabla U,\nabla \psi\rangle\,dx=\int_{\Omega_{L_0}} U^{q-1}\,\psi\,dx,\qquad \mbox{ for every } \psi\in C^\infty_0(\Omega_{L_0}),
\]
i.\,e. $U$ is a solution of the Lane-Emden equation in $\Omega_{L_0}$, as claimed.
\vskip.2cm\noindent
Next, we claim that $U$ actually does not depend on the variable $x'$, but only on $x_N$. To prove this, let us fix two points 
\[
X=(x'_0,x_N),\, Y=(x_1',x_N)\in \mathbb{R}^{N-1}\times I,\qquad \mbox{ with } x_0'\not=x_1'.
\] 
By definition, we have 
\[
U(X)=\lim_{n\to\infty} w_{\Omega_n}(X)\qquad \mbox{ and }\qquad U(Y)=\lim_{n\to\infty} w_{\Omega_n}(Y).
\]
We introduce the ``horizontally'' translated set $\widetilde\Omega_n=\Omega_n-X+Y$ and observe that we clearly have
\[
w_{\widetilde \Omega_n}(x)=w_{\Omega_n}(x+X-Y),\qquad \mbox{ for } x\in \widetilde \Omega_n.
\]
We notice that by construction, we have that there exists $n_0=n_0(|X-Y|)\in \mathbb{N}$ such that 
\[
\Omega_{\frac{n}{4}}\subset \widetilde{\Omega}_n\subset \Omega_{4\,n},\qquad \mbox{ for every } n\ge n_0.
\]
Again by the comparison principle of Theorem \ref{thm:comparison}, we know that 
\[
w_{\Omega_\frac{n}{4}}\le w_{\widetilde \Omega_n}\le w_{\Omega_{4\,n}},
\]
and moreover, we have 
\[
\lim_{n\to\infty} w_{\Omega_\frac{n}{4}}(x)=\lim_{n\to\infty}w_{\Omega_{4\,n}}(x)=U(x),
\]
thanks to \eqref{monot}. This in turn implies that 
\[
\lim_{n\to\infty} w_{\widetilde \Omega_n}(x)=U(x),
\]
as well.
We then obtain
\[
\begin{split}
U(X)=\lim_{n\to\infty} w_{\Omega_n}(X)=\lim_{n\to\infty} w_{\widetilde \Omega_n}(Y)=U(Y),
\end{split}
\]
as desired.
\vskip.2cm\noindent
By resuming, we get that $U$ is a positive weak solution of 
\[
\left\{\begin{array}{rcll}
-\Delta_p U&=&U^{q-1},& \mbox{ in } \Omega_{L_0},\\
&&&\\
U&=&0,& \mbox{ on } \displaystyle\left(-\frac{L_0}{2},\frac{L_0}{2}\right)^{N-1}\times\{-1,\,1\},
\end{array}
\right.
\]
which does not depend on the variable $x'$. In particular, we have that $x_N\mapsto U(x',x_N)$ is a weak solution of the same one-dimensional problem solved by $w_I$, as well.
By uniqueness of the solution and arbitrariness of $L_0$, we finally get \eqref{uguali1d}.
This concludes the proof.
\end{proof}
\begin{rem}
The $L^p$ convergence of the previous result can actually be upgraded to a uniform convergence. It is sufficient to observe that $U_\infty$ is continuous, that $\{w_{\Omega_n}\}_{n\in\mathbb{N}}$ is a sequence of monotone increasing continuous functions and then use Dini's Theorem. We leave the details to the reader.
\end{rem}

\end{document}